\DeclarePairedDelimiter{\floorr}{\lfloor}{\rfloor}
\newcommand{\citep}{\cite}
\newtheorem{theorem}{Theorem}[section]
\newtheorem{lemma}[theorem]{Lemma}
\newtheorem{corollary}[theorem]{Corollary}
\newtheorem{proposition}[theorem]{Proposition}
\theoremstyle{definition}
\newtheorem{definition}[theorem]{Definition}
\theoremstyle{remark}
\newtheorem{remark}[theorem]{Remark}
\newtheorem{assumption}[theorem]{Assumption}
\newcommand{\argmin}[1]{\underset{#1}{\operatorname{argmin}}}
\newcommand{\transpose}{^\top\! }
\newcommand{\inner}[2]{\left\langle{#1},{#2}\right\rangle}
\newcommand{\innersmall}[2]{\langle{#1},{#2}\rangle}
\newcommand{\innerbig}[2]{\big\langle{#1},{#2}\big\rangle}
\newcommand{\trace}{\mathrm{Tr}}
\newcommand{\Trace}{\mathrm{Tr}}
\newcommand{\spann}{\mathrm{span}}
\newcommand{\im}{\mathrm{im}}
\newcommand{\sbdop}{\operatorname{sbd}}
\newcommand{\sbd}[1]{\sbdop\!\left({#1}\right)}
\newcommand{\Proj}{\mathrm{Proj}}
\newcommand{\St}{\mathrm{St}}
\newcommand{\T}{\mathrm{T}}
\newcommand{\N}{\mathrm{N}}
\newcommand{\Snn}{{\mathbb{S}^{n\times n}}}
\newcommand{\Spp}{{\mathbb{S}^{p\times p}}}
\newcommand{\Rnn}{{\mathbb{R}^{n\times n}}}
\newcommand{\Rnp}{{\mathbb{R}^{n\times p}}}
\newcommand{\Rpd}{{\mathbb{R}^{p\times d}}}
\newcommand{\Rpp}{\mathbb{R}^{p\times p}}
\newcommand{\Rp}{{\mathbb{R}^{p}}}
\newcommand{\Rk}{{\mathbb{R}^{k}}}
\newcommand{\reals}{{\mathbb{R}}}
\newcommand{\Rn}{{\mathbb{R}^n}}
\newcommand{\Rm}{{\mathbb{R}^m}}
\newcommand{\Rq}{{\mathbb{R}^q}}
\newcommand{\grad}{\mathrm{grad}\,}
\newcommand{\Hess}{\mathrm{Hess}\,}
\newcommand{\vecc}{\mathrm{vec}}
\newcommand{\diag}{\mathrm{diag}}
\newcommand{\D}{\mathrm{D}}
\newcommand{\calM}{\mathcal{M}}
\newcommand{\calN}{\mathcal{N}}
\newcommand{\calC}{\mathcal{C}}
\newcommand{\calE}{\mathcal{E}}
\newcommand{\calF}{\mathcal{F}}
\newcommand{\calL}{\mathcal{L}}
\newcommand{\calO}{\mathcal{O}}
\newcommand{\calT}{\mathcal{T}}
\newcommand{\calA}{\mathcal{A}}
\newcommand{\Id}{\operatorname{Id}} 
\newcommand{\rank}{\operatorname{rank}}
\newcommand{\nulll}{\operatorname{null}}
\newcommand{\floor}[1]{\lfloor #1 \rfloor}
\newcommand{\lambdamin}{\lambda_\mathrm{min}}
\newcommand{\st}{\textrm{ subject to }}
\begin{document}                        


\title{Deterministic guarantees for Burer--Monteiro factorizations of smooth semidefinite programs}

\author{Nicolas Boumal}{Mathematics Department and Program in Applied and Computational Mathematics, Princeton University}
\author{Vladislav Voroninski}{Helm.ai}
\author{Afonso S.\ Bandeira}{Department of Mathematics and Center for Data Science,\\Courant Institute of Mathematical Sciences, New York University}





\begin{abstract}

We consider semidefinite programs (SDPs) with equality constraints. The variable to be optimized is a positive semidefinite matrix $X$ of size $n$. Following the Burer--Monteiro approach, we optimize a factor $Y$ of size $n \times p$ instead, such that $X = YY\transpose$. This ensures positive semidefiniteness at no cost and can reduce the dimension of the problem if $p$ is small, but results in a non-convex optimization problem with a quadratic cost function and quadratic equality constraints in $Y$. In this paper, we show that if the set of constraints on $Y$ regularly defines a smooth manifold, then, despite non-convexity, first- and second-order necessary optimality conditions are also sufficient, provided $p$ is large enough. For smaller values of $p$, we show a similar result holds for almost all (linear) cost functions. Under those conditions, a global optimum $Y$ maps to a global optimum $X = YY\transpose$ of the SDP. We deduce old and new consequences for SDP relaxations of the generalized eigenvector problem, the trust-region subproblem and quadratic optimization over several spheres, as well as for the Max-Cut and Orthogonal-Cut SDPs which are common relaxations in stochastic block modeling and synchronization of rotations.\\

\end{abstract}

\maketitle   






\section{Introduction}

We consider semidefinite programs (SDPs) of the form
\begin{align}
f^\star = 
\min_{X\in\Snn} \inner{C}{X} \quad \st \quad \calA(X) = b, \ X \succeq 0,
\tag{SDP}
\label{eq:SDP}
\end{align}
where $\Snn$ is the set of real symmetric matrices of size $n$, $C \in \Snn$ is the cost matrix, $\inner{C}{X} = \trace(C\transpose X)$, $\calA \colon \Snn \to \Rm$ is a linear operator capturing $m$ equality constraints with right-hand side $b\in\Rm$, and the variable $X$ is symmetric, positive semidefinite.
Let $A_1, \ldots, A_m \in \Snn$ be the constraint matrices such that $\calA(X)_i = \inner{A_i}{X}$, and let
\begin{align}
	\calC & = \left\{ X \in \Snn : \calA(X) = b \textrm{ and } X \succeq 0 \right\}
	\label{eq:calC}
\end{align}
be the search space of~\eqref{eq:SDP}, assumed non empty.

Interior point methods solve~\eqref{eq:SDP} in polynomial time~\citep{nesterov1994interior}. In practice however, for $n$ beyond a few thousands, such algorithms run out of memory (and time), prompting research for alternative solvers. Crucially, if $\calC$ is compact, then~\eqref{eq:SDP} admits a global optimum of rank at most~$r$, where $\frac{r(r+1)}{2} \leq m$~\citep{pataki1998rank,barvinok1995problems}---we review this fact in Section~\ref{sec:geometryconvex}.
Thus, if one restricts $\calC$ to matrices of rank at most $p$ with $\frac{p(p+1)}{2} \geq m$, the  optimal value remains unchanged. This restriction is easily enforced by factorizing $X = YY\transpose$ where $Y$ has size $n\times p$, yielding a quadratically constrained quadratic program:
\begin{align}
\min_{Y\in\Rnp} \inner{CY}{Y} \quad \textrm{ subject to } \quad \calA(YY\transpose) = b.
\tag{P}
\label{eq:P}
\end{align}
In general, \eqref{eq:P} is non-convex because its search space
\begin{align}
	\calM_{p} & = \left\{ Y \in \Rnp : \calA(YY\transpose\,) = b \right\}
	\label{eq:calM}
\end{align}
is non-convex. (When $p$ is clear from context or unimportant, we just write $\calM$.)

Non-convexity makes it a priori unclear how to solve~\eqref{eq:P}. Still, the benefits are that $\calM$ requires no conic constraint and can be lower dimensional than $\calC$.
This has motivated Burer and Monteiro~\citep{sdplr,burer2005local} to try to solve~\eqref{eq:P} using local optimization methods, with surprisingly good results. They developed theory in support of this observation (details below). About their results, 
Burer and Monteiro write:
\begin{quote}
	``\emph{How large must we take $p$ so that the local minima of~\eqref{eq:P} are guaranteed to map to global minima of~\eqref{eq:SDP}? Our theorem asserts that we need only\footnote{The condition on $p$ and $m$ is slightly, but inconsequentially, different in~\citep{burer2005local}.} $\frac{p(p+1)}{2} > m$ (with the important caveat that positive-dimensional faces of~\eqref{eq:SDP} which are `flat' with respect to the objective function can harbor non-global local minima).}''
	\begin{flushright}
		--- End of Section 3 in~\citep{burer2005local}, mutatis mutandis.
	\end{flushright}
\end{quote}
The caveat---the existence or non-existence of non-global local optima, or their potentially adverse effect for local optimization algorithms---was not further discussed. How mild this caveat really is (as stated) is hard to gauge, considering $\calC$ can have a continuum of faces.

\subsection*{Contributions}

In this paper, we identify settings where the non-convexity of~\eqref{eq:P} is benign, in the sense that second-order necessary optimality conditions are sufficient for global optimality---an unusual property for a non-convex problem. This paper extends a previous conference paper by the same authors~\citep{boumal2016bmapproach}. Our core assumption is as follows.
\begin{assumption}\label{assu:M}
	For a given $p$ such that $\calM$~\eqref{eq:calM} is non-empty, constraints on~\eqref{eq:SDP} defined by $A_1, \ldots, A_m \in \Snn$ and $b \in \Rm$ satisfy at least one of the following:
	\begin{enumerate}
		\item[a.] $\{A_1Y, \ldots, A_mY\}$ are linearly independent in $\Rnp$ for all $Y \in \calM$; or
		\item[b.] $\{A_1Y, \ldots, A_mY\}$ span a subspace of constant dimension in $\Rnp$ for all $Y$ in an open neigh\-bor\-hood of $\calM$ in $\Rnp$.
	\end{enumerate}
	In either case, let $m'$ denote the dimension of the space spanned by $\{A_1Y, \ldots, A_mY\}$. (By assumption, $m'$ is independent of the choice of $Y\in\calM$.)
\end{assumption}
Under Assumption~\ref{assu:M}, $\calM$ is a smooth manifold, which is why we say such an~\eqref{eq:SDP} is \emph{smooth}. Furthermore, if the assumption holds for several values of $p$, then $m'$ is the same for all. Formal statements follow; proofs are in Appendix~\ref{apdx:assuforallp}.
\begin{proposition}\label{prop:submanifold}
	Under Assumption~\ref{assu:M}, $\calM$ is an embedded submanifold of $\Rnp$ of dimension $np - m'$.
\end{proposition}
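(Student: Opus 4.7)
The plan is to realize $\calM$ as the zero set of a smooth map and invoke standard embedded submanifold theorems from differential geometry (regular value / constant rank).

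First, I would define the constraint map $h \colon \Rnp \to \Rm$ componentwise by $h_i(Y) = \inner{A_i}{YY\transpose} - b_i$, so that $\calM = h^{-1}(0)$. Using $\inner{A_i}{YY\transpose} = \trace(A_i YY\transpose)$ and symmetry of $A_i$, the directional derivative along $\dot Y \in \Rnp$ is
\begin{equation*}
	\mathrm{D} h_i(Y)[\dot Y] = \inner{A_i}{\dot Y Y\transpose + Y\dot Y\transpose} = 2\inner{A_iY}{\dot Y}.
\end{equation*}
Thus, with respect to the Frobenius inner product on $\Rnp$, the gradient of $h_i$ at $Y$ is $2A_iY$, and the rank of the Jacobian $\mathrm{D} h(Y)$ equals the dimension of $\spann\{A_1Y, \ldots, A_mY\} \subset \Rnp$.

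In case (a) of Assumption~\ref{assu:M}, $\{A_1Y, \ldots, A_mY\}$ are linearly independent for every $Y \in \calM$, so $\mathrm{D} h(Y)$ has full rank $m = m'$ at every point of $\calM$; that is, $0$ is a regular value of $h$. The regular value (submersion) theorem then yields that $\calM = h^{-1}(0)$ is an embedded submanifold of $\Rnp$ of codimension $m$, i.e.\ of dimension $np - m'$. In case (b), the dimension of $\spann\{A_1Y, \ldots, A_mY\}$ equals the constant $m'$ on an open neighborhood $U$ of $\calM$, so $h|_U$ has constant rank $m'$. The constant rank theorem then gives that $h^{-1}(0) \cap U = \calM$ is an embedded submanifold of $U$ (and hence of $\Rnp$) of dimension $np - m'$.

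Since both cases lead to a direct application of a textbook theorem once the rank computation is in hand, I expect no genuine obstacle here; the only delicate point is to state the hypotheses in the form required by the constant rank theorem (constant rank on an open neighborhood, not just on $\calM$), which is precisely why Assumption~\ref{assu:M}(b) is phrased on a neighborhood of $\calM$ rather than on $\calM$ itself. The claim that $m'$ is independent of the choice of $Y \in \calM$ under (a) is automatic since $m' = m$ there, while under (b) it is built into the assumption.
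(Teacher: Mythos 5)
Your proof is correct and follows essentially the same route as the paper: realize $\calM$ as the zero level set of the constraint map, compute that the rank of its differential equals $\dim\spann\{A_1Y,\ldots,A_mY\}$, and apply the regular level set theorem under Assumption~\ref{assu:M}a and the constant rank level set theorem under Assumption~\ref{assu:M}b. No issues.
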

\begin{proposition}\label{prop:assuforallp}
	If Assumption~\ref{assu:M} holds for some $p$, it holds for all $p' \leq p$ such that $\calM_{p'}$ is non-empty. Furthermore, if Assumption~\ref{assu:M}a holds for $p = n$, then it holds for all $p'$ such that $\calM_{p'}$ is non-empty. In both cases, $m'$ is independent of $p$.
\end{proposition}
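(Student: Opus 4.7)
The plan is to translate between different values of $p$ using two linear constructions: (i) a ``padding'' embedding $Y' \mapsto [Y', 0]$ that sends $\calM_{p'}$ into $\calM_p$ whenever $p' \leq p$; and (ii) an orthonormal factorization $Y' = YV^\top$, with $V \in \mathbb{R}^{p' \times n}$ having orthonormal columns and $Y \in \mathbb{R}^{n\times n}$, that compresses any $Y' \in \mathbb{R}^{n \times p'}$ with $p' \geq n$ down to an $n \times n$ matrix. Because each $A_i$ acts on the left, one has $A_i [Y', 0] = [A_i Y', 0]$ and $A_i Y' = (A_i Y) V^\top$, so linear (in)dependence and span dimension of $\{A_1 Y, \ldots, A_m Y\}$ transfer between the two representations; this is the engine of the whole argument.

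For the first statement I would fix $p' \leq p$ with $\calM_{p'}$ non-empty, pick $Y' \in \calM_{p'}$, and set $Y = [Y', 0] \in \Rnp$. Then $YY^\top = Y'(Y')^\top$ gives $Y \in \calM_p$, and the padding map is a linear isomorphism from the span of $\{A_i Y'\}$ in $\mathbb{R}^{n \times p'}$ onto the span of $\{A_i Y\}$ in $\Rnp$. Under Assumption~\ref{assu:M}a at $p$, independence transfers, so Assumption~\ref{assu:M}a holds at $p'$ with $m' = m$. Under Assumption~\ref{assu:M}b at $p$, the preimage under padding of an open neighborhood of $\calM_p$ on which the span has constant dimension $m'$ is an open neighborhood of $\calM_{p'}$ in $\mathbb{R}^{n \times p'}$; the span dimension is preserved by the isomorphism, so Assumption~\ref{assu:M}b holds at $p'$ with the same $m'$.

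For the second statement I would assume Assumption~\ref{assu:M}a holds for $p = n$ and take $p' > n$ with $\calM_{p'}$ non-empty. Any $Y' \in \mathbb{R}^{n \times p'}$ has row space of dimension at most $n \leq p'$, so I can extend an orthonormal basis of this row space to an $n$-frame $V \in \mathbb{R}^{p' \times n}$ satisfying $V^\top V = I_n$, and set $Y = Y' V \in \mathbb{R}^{n \times n}$, giving $Y' = Y V^\top$. Then $YY^\top = Y'(Y')^\top$, so $Y' \in \calM_{p'}$ forces $Y \in \calM_n$; and from $A_i Y' = (A_i Y) V^\top$, a relation $\sum_i c_i A_i Y' = 0$ yields $\sum_i c_i A_i Y = 0$ upon right-multiplying with $V$, forcing all $c_i = 0$ by Assumption~\ref{assu:M}a at $p = n$ applied to $Y$. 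Hence $\{A_i Y'\}$ is linearly independent, giving Assumption~\ref{assu:M}a at $p'$; combined with the descent of the first statement (applied from $p = n$ down to each $p' \leq n$), this covers every admissible $p'$.

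Both arguments preserve $m'$ by construction ($m' = m$ trivially under Assumption~\ref{assu:M}a, and $m'$ is preserved by the padding isomorphism under Assumption~\ref{assu:M}b), so $m'$ is independent of $p$ in both cases. I do not foresee a real obstacle; the only delicate point is that the ``open neighborhood in $\Rnp$'' condition in Assumption~\ref{assu:M}b is an ambient-space condition, which is exactly what makes the preimage under the (continuous, linear) padding map an open neighborhood in the appropriate ambient $\mathbb{R}^{n \times p'}$.
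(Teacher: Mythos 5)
Your proposal is correct and follows essentially the same route as the paper's proof: zero-padding $Y'\mapsto[Y'\,|\,0]$ for $p'\leq p$ (with the observation that $A_i[Y'\,|\,0]=[A_iY'\,|\,0]$ preserves span dimensions, and continuity of the padding map handling the open-neighborhood condition in Assumption~\ref{assu:M}b), and a compression $Y'=YV^\top$ with $V^\top V=I_n$ for $p'>n$, which is the same device the paper implements via the full SVD of $Y'$. The details all check out, so there is nothing to add.
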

Examples of SDPs satisfying Assumption~\ref{assu:M} are detailed in Section~\ref{sec:applications} (they all satisfy Assumption~\ref{assu:M}a for $p = n$). The assumption itself is further discussed in Section~\ref{sec:discussionassumptions}.
Our first main result is as follows, where $\rank\calA$ can be replaced by $m$ if preferred. Optimality conditions are derived in Section~\ref{sec:geometryandoptimconditions}.
\begin{theorem}\label{thm:mastersmallp}
	Let $p$ be such that $\frac{p(p+1)}{2} > \rank\calA$ and such that Assumption~\ref{assu:M} holds. For almost any cost matrix $C \in \Snn$, if $Y \in \calM$ satisfies first- and second-order necessary optimality conditions for~\eqref{eq:P}, then $Y$ is globally optimal and $X = YY\transpose$ is globally optimal for~\eqref{eq:SDP}.
\end{theorem}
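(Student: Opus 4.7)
The plan is to show that at any SOSP $Y$ of~\eqref{eq:P}, the dual slack $S := C - \calA^*(\mu) \in \Snn$ is positive semidefinite; combined with the first-order identity $SY = 0$ (hence $\innersmall{S}{YY\transpose} = 0$), SDP weak duality will then certify $X = YY\transpose$ as a global optimum of~\eqref{eq:SDP}. I first derive the necessary optimality conditions for~\eqref{eq:P} on the manifold $\calM$ of Proposition~\ref{prop:submanifold}: the first-order condition provides a multiplier $\mu \in \reals^m$ with $(C - \calA^*(\mu))Y = 0$, and the second-order condition reads $\trace(\dot Y\transpose S \dot Y) \geq 0$ for every $\dot Y \in T_Y\calM = \{\dot Y \in \Rnp : \inner{A_iY}{\dot Y} = 0,\ i=1,\ldots,m\}$. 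Because $SY = 0$ forces $\col(Y) \subseteq \ker S$, the Hessian quadratic form depends only on the component of $\dot Y$ lying in $\col(Y)^\perp$.

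I then treat two cases. If $\rank Y < p$, pick a unit $v \in \reals^p$ with $Yv = 0$; then $\dot Y = uv\transpose$ automatically lies in $T_Y\calM$ since $Y\dot Y\transpose + \dot Y Y\transpose = 0$, and the second-order condition reduces to $u\transpose S u \geq 0$ for every $u \in \reals^n$, giving $S \succeq 0$. If $\rank Y = p$, decompose $\dot Y = YA + \dot Y_\perp$ along $\col(Y) \oplus \col(Y)^\perp$; the tangency equation becomes the linear system $\inner{Y\transpose A_iY}{A} = -\inner{A_iY}{\dot Y_\perp}$ in the unknown $A \in \Spp$, which is solvable for arbitrary $\dot Y_\perp$ precisely when the $m$ matrices $\{Y\transpose A_i Y\}_{i=1}^{m}$ span a subspace of dimension $m'$ in $\Spp$. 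Call such a $Y$ \emph{good}: then the projection $T_Y\calM \to \{Z \in \Rnp : \col(Z) \subseteq \col(Y)^\perp\}$ is surjective, so the second-order condition yields $\trace(Z\transpose S Z) \geq 0$ for every such $Z$; testing with rank-one $Z = v_\perp e_1\transpose$, $v_\perp \in \col(Y)^\perp$, gives $S \succeq 0$.

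The remaining possibility is a full-rank SOSP that is \emph{not} good; this is ruled out for almost every $C$. The hypothesis $p(p+1)/2 > \rank\calA$ implies $p(p+1)/2 > m'$ (since $\rank\calA \geq m'$), so the $m'$ matrices $Y\transpose A_i Y$ can generically be linearly independent in $\Spp$; hence the bad set $\{Y \in U : Y \text{ not good}\}$ is a proper algebraic subset of the full-rank stratum $U \subseteq \calM$, of codimension at least one in $\calM$ --- provided one exhibits at least one good point in $\calM$. Consequently,
\[
\mathcal{B} = \left\{(C, Y) \in \Snn \times U : Y \text{ first-order critical for } C,\ Y \text{ not good}\right\}
\]
has dimension at most $\binom{n+1}{2} + \binom{p}{2} - 1$, by a fiberwise count: at fixed rank-$p$ $Y$ the admissible $C$'s form an affine subspace of $\Snn$ of dimension $m' + \binom{n-p+1}{2}$ (namely $\calA^*(\reals^m) + \{S \in \Snn : SY = 0\}$), and $Y$ ranges over the codim-$1$ bad subset of $U$ of dimension $np - m' - 1$. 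The orthogonal group $\Op$ acts freely on $U$ via $Y \mapsto YQ$, preserves both first-order criticality and goodness, and acts trivially on $\Snn$; quotienting removes exactly $\binom{p}{2}$ dimensions, leaving $\dim(\mathcal{B}/\Op) < \binom{n+1}{2} = \dim\Snn$. By Sard's theorem, the projection of $\mathcal{B}/\Op$ onto $\Snn$ has Lebesgue measure zero, and discarding this nullset every SOSP is either rank-deficient or full-rank-and-good. The main obstacle is the codimension-one claim: one must actually produce a good point in $\calM$, and this is precisely where $p(p+1)/2 > \rank\calA$ enters essentially; once secured, the $\Op$-quotient dimension bookkeeping absorbs the otherwise awkward $\binom{p}{2}$ excess and the Sard step concludes.
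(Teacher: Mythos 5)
Your case analysis is partly sound: the rank-deficient case is exactly the paper's Proposition~\ref{prop:rankdeficientY}, and your ``full-rank and good'' case (where good means $\rank\calL_X=m'$, equivalently $\dim\calF_X=\Delta$) is a correct special case of what the paper proves in Theorem~\ref{thm:eigS}. Your fiber count is also right as stated, since $\im(\calA^*)\cap\{S: SY=0\}$ has dimension exactly $\rank\calA-m'$, so the fiber of admissible $C$'s over a fixed rank-$p$ point $Y$ does have dimension $m'+\tfrac{(n-p)(n-p+1)}{2}$.

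The genuine gap is the one you flag yourself, and it is not a routine verification: you never establish that the set of non-good full-rank points has codimension at least one in $\calM$. Exhibiting a single good point does not suffice unless the full-rank stratum is irreducible (otherwise you need a good point in every component, and $\calM$ can be disconnected, e.g.\ products of Stiefel manifolds); and the existence of even one good point under the sole hypothesis $\tfrac{p(p+1)}{2}>\rank\calA$ is exactly as hard as the statement you are trying to prove by this route, because your dimension count is tight --- the incidence set of \emph{all} full-rank critical pairs has dimension exactly $\dim\Snn$ after the $\Op$ quotient, so the entire conclusion hangs on that one unit of codimension. (Two smaller points: the final step is not Sard's theorem but the fact that a Lipschitz image of a set of Hausdorff dimension below $\dim\Snn$ is Lebesgue-null, and you need $\mathcal{B}$ to be semialgebraic or a countable union of submanifolds for ``dimension'' to carry that meaning.) The paper's proof avoids the good/bad dichotomy entirely: at any full-rank critical point, $S=C-\calA^*(\mu)$ has nullity at least $p$, hence $C\in\calN_p+\im(\calA^*)$ where $\calN_p$ is the set of symmetric matrices with nullity $p$; this set has dimension at most $\tfrac{n(n+1)}{2}-\tfrac{p(p+1)}{2}+\rank\calA<\tfrac{n(n+1)}{2}$ under the hypothesis, so for almost all $C$ there are \emph{no} full-rank critical points and Proposition~\ref{prop:rankdeficientY} finishes. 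That count is strictly sharper than yours because the set of admissible $C$'s depends on $Y$ only through $\col(Y)$, a Grassmannian of dimension $p(n-p)$, whereas your parametrization by $Y\in\calM$ modulo $\Op$ carries $\Delta=\tfrac{p(p+1)}{2}-m'$ redundant dimensions --- which is precisely the slack you are forced to recover through the unproven codimension-one claim.
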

%
%
%
The proof combines two intermediate results (Proposition~\ref{prop:rankdeficientY} and Lemma~\ref{lem:criticalptsrankdeficient} below):
\begin{enumerate}
	\item If $Y$ is \emph{column-rank deficient} and satisfies first- and second-order necessary optimality conditions for~\eqref{eq:P}, then it is globally optimal and $X=YY\transpose$ is optimal for~\eqref{eq:SDP}; and
	\item If $\frac{p(p+1)}{2} > \rank \calA$, then, for almost all $C$, every $Y$ which satisfies first-order necessary optimality conditions is column-rank deficient.
\end{enumerate}
The first step is a variant of well-known results~\citep{sdplr,burer2005local,journee2010low}. The second step is new and crucial, as it allows to formally exclude the existence of spurious local optima, thus resolving the caveat raised by Burer and Monteiro generically in $C$.

Theorem~\ref{thm:mastersmallp} is a statement about the optimization problem itself, not about specific algorithms.
If $\calC$ is compact, then so is $\calM$ and known algorithms for optimization on manifolds converge to \emph{second-order critical points},\footnote{Points which satisfy first- and second-order necessary optimality conditions. Compactness of $\calC$ ensures a minimum is attained in~\eqref{eq:P}, hence also that second-order critical points exist.} regardless of initialization~\citep{boumal2016globalrates}.
%
%
%
%
%
Thus, provided $p$ is large enough, for almost any cost matrix $C$, such algorithms generate sequences which converge to global optima of~\eqref{eq:P}. Each iteration requires a polynomial number of arithmetic operations.

In practice, the algorithm is stopped after a finite number of iterations, at which point one can only guarantee approximate satisfaction of first- and second-order necessary optimality conditions. Ideally, this should lead to a statement of approximate optimality. We are only able to make that statement for large values of $p$.
We state this result informally here, and give a precise statement in Corollary~\ref{cor:approxsocpapproxoptimalpnplusone} below.
\begin{theorem}[Informal]\label{thm:stableinformal}
	Assume $\calC$ is compact and Assumption~\ref{assu:M} holds for $p = n+1$. Then, for any cost matrix $C \in \Snn$, if $Y \in \calM_{n+1}$ approximately satisfies first- and second-order necessary optimality conditions for~\eqref{eq:P}, then it is approximately globally optimal and $X = YY\transpose$ is approximately globally optimal for~\eqref{eq:SDP}, in terms of attained cost value.
\end{theorem}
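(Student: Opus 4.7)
The plan is to execute a stable, quantitative version of the two-step argument described right after Theorem~\ref{thm:mastersmallp}. The decisive structural feature of the regime $p = n+1$ is that every $Y \in \calM_{n+1}$ is automatically column-rank deficient: since $Y$ has size $n \times (n+1)$, it admits a unit right null vector $v \in \reals^{n+1}$ with $Yv = 0$. Consequently the ``rank-deficient $\Rightarrow$ globally optimal'' mechanism that required genericity of $C$ in Theorem~\ref{thm:mastersmallp} can be invoked for \emph{every} cost matrix $C$, removing the need to exclude a measure-zero set.

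First I would extract an approximate Lagrange multiplier $\mu \in \reals^m$ from the approximate first-order condition. Under Assumption~\ref{assu:M}, the linear map $\mu \mapsto \calA^*(\mu) Y$ has a bounded pseudo-inverse at each $Y \in \calM_{n+1}$, and by compactness this bound is uniform in $Y$. Hence $\frobnormsmall{\grad f(Y)} \leq \epsilon_1$ yields a $\mu$ of bounded norm such that the dual slack $S := C - \calA^*(\mu) \in \Snn$ satisfies $\frobnormsmall{SY} = O(\epsilon_1)$. Next I would turn the approximate second-order condition into near positive semidefiniteness of $S$. For any unit $z \in \reals^n$, the rank-one perturbation $\dot Y = z v^\top$ satisfies $Y \dot Y^\top = 0 = \dot Y Y^\top$, and is therefore exactly tangent to $\calM_{n+1}$ at $Y$. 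A short computation, identical to the one in the proof of the exact result, gives $\innersmall{\Hess L(Y,\mu)[\dot Y]}{\dot Y} = 2\, z^\top S z$ for the Hessian of the Lagrangian; the Riemannian Hessian of $f$ on $\calM_{n+1}$ differs from this by a term controlled by $\frobnormsmall{\grad f(Y)}$ times a bounded curvature factor. Approximate SOC therefore implies $z^\top S z \geq -C_1 \epsilon_2 - C_2 \epsilon_1$ uniformly in unit $z$, i.e., $\lambdamin(S) \geq -\epsilon$ for some $\epsilon = O(\epsilon_1 + \epsilon_2)$.

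Finally I would compare costs directly, bypassing any Slater-type argument. For any feasible $X \in \calC$, the identity $C = S + \calA^*(\mu)$ combined with $\calA(YY^\top) = \calA(X) = b$ gives
\begin{align*}
\innersmall{C}{YY^\top} - \innersmall{C}{X} = \innersmall{S}{YY^\top} - \innersmall{S}{X}.
\end{align*}
The first right-hand term is bounded in absolute value by $\frobnormsmall{Y} \cdot \frobnormsmall{SY} = O(\epsilon_1)$ by Cauchy--Schwarz and compactness of $\calM_{n+1}$. For the second, $X \succeq 0$ together with $S + \epsilon I \succeq 0$ gives $\innersmall{S}{X} \geq -\epsilon \trace(X)$, and $\trace(X)$ is uniformly bounded on the compact set $\calC$. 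Specialising $X$ to a global optimum of~\eqref{eq:SDP} delivers $\innersmall{C}{YY^\top} \leq f^\star + O(\epsilon_1 + \epsilon_2)$, the claimed approximate optimality. The main obstacle I expect is the quantitative bookkeeping around the Lagrange multiplier: converting ``bounded pseudo-inverse'' and ``Riemannian minus Lagrangian Hessian is $O(\grad)$'' into explicit constants depending only on $C$, $\calA$, and the diameter of $\calC$, and in particular checking that these constants stay controlled when Assumption~\ref{assu:M}b (rather than~\ref{assu:M}a) is the one that holds, is where the technical care concentrates.
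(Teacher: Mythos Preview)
Your proposal follows the same route as the paper: exploit automatic column-rank deficiency at $p=n+1$ to probe $S$ along rank-one tangent directions $zv^\top$, then bound the optimality gap through the dual slack. The final cost-comparison paragraph is essentially the paper's Lemma~\ref{lem:optimgap}.

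The one place where you make life harder than necessary is in the Hessian step. You write that ``the Riemannian Hessian of $f$ on $\calM_{n+1}$ differs from [the Lagrangian Hessian] by a term controlled by $\frobnormsmall{\grad f(Y)}$ times a bounded curvature factor,'' and flag the quantitative control of this discrepancy---and of the pseudo-inverse---as the main obstacle. In fact, for the specific multiplier $\mu = G^\dagger \calA(CYY^\top)$ (precisely the one your pseudo-inverse produces), the paper shows in~\eqref{eq:gradg}--\eqref{eq:Hessg} that $\grad g(Y) = 2SY$ and $\innersmall{\dot Y}{\Hess g(Y)[\dot Y]} = 2\innersmall{\dot Y}{S\dot Y}$ hold \emph{exactly} for all tangent $\dot Y$, with no curvature correction: the $\dot S Y$ term that would carry the gradient dependence lies entirely in the normal space and is annihilated by $\Proj_Y$. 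Consequently $\Hess g(Y) \succeq -\varepsilon_H \Id$ gives directly $S \succeq -\tfrac{\varepsilon_H}{2} I_n$ (Lemma~\ref{lem:fromHesstoS}), with no $\epsilon_1$ contamination and no need for uniform bounds on $G^\dagger$ or on $\mu$ itself. The ``technical care'' you anticipate concentrates nowhere: once $S$ is the canonical one, every inequality up to the final Cauchy--Schwarz/trace bound is actually an equality, and the argument goes through verbatim under either variant of Assumption~\ref{assu:M}.
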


Theorem~\ref{thm:mastersmallp} does not exclude the possibility that a zero-measure subset of cost matrices $C$ may pose difficulties. Theorem~\ref{thm:stableinformal} does apply for all cost matrices, but requires a large value of $p$. A complementary result in this paper, which comes with a more geometric proof, constitutes a refinement of the caveat raised by Burer and Monteiro~\citep{burer2005local} in the excerpt quoted above. It states that a suboptimal second-order critical point $Y$ must map to a face $\calF_{YY\transpose}$ of the convex search space $\calC$ whose dimension is large (rather than just positive) when $p$ itself is large. The facial structure of $\calC$ is discussed in Section~\ref{sec:geometryconvex}. The following is a consequence of Corollary~\ref{cor:YoptimalifSpsd} and Theorem~\ref{thm:eigS} below.
\begin{theorem}\label{thm:deterministic}
	Let Assumption~\ref{assu:M} hold for some $p$.
	Let $Y\in\calM$ be a second-order critical point of~\eqref{eq:P}. If $\rank(Y) < p$, or if $\rank(Y) = p$ and $\dim \calF_{YY\transpose} < \frac{p(p+1)}{2} - m' + p$, then $Y$ is globally optimal for~\eqref{eq:P} and $X = YY\transpose$ is globally optimal for~\eqref{eq:SDP}.
\end{theorem}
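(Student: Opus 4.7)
The plan is to prove Theorem~\ref{thm:deterministic} via a dual-certificate argument, with the two cases treated by the two auxiliary results cited in the statement. At any first-order critical point $Y \in \calM$ of~\eqref{eq:P}, let $\mu \in \Rm$ be the Lagrange multiplier and define $S = S(Y) = C - \calA^*(\mu) \in \Snn$; first-order criticality forces $SY = 0$. Corollary~\ref{cor:YoptimalifSpsd} supplies the standard weak-duality implication: $S \succeq 0$ together with $SY = 0$ and $X = YY\transpose \in \calC$ certifies that $X$ solves~\eqref{eq:SDP} and $Y$ solves~\eqref{eq:P}. Hence it suffices to deduce $S \succeq 0$ under either hypothesis.

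For the rank-deficient case ($\rank(Y) < p$), I would use the classical Burer--Monteiro trick underlying Proposition~\ref{prop:rankdeficientY}. Pick any $v \in \Rn$ and any nonzero $w \in \Rp$ with $Yw = 0$ (which exists by column-rank deficiency), and set $\eta := vw\transpose$. Then $Y\eta\transpose + \eta Y\transpose = vw\transpose Y\transpose + Y w v\transpose = 0$, so $\eta \in \T_Y\calM$ is automatically tangent. Plugging $\eta$ into the second-order necessary condition reduces to $\inner{S}{\eta\eta\transpose} = \|w\|^2\, v\transpose S v \geq 0$; since $v$ is arbitrary, $S \succeq 0$.

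For the full-rank case ($\rank(Y) = p$), the null-space construction is unavailable, and the argument must route through Theorem~\ref{thm:eigS}. I expect that result to convert second-order criticality into a spectral bound on $S$ whose critical threshold is precisely $\dim \calF_{YY\transpose} = \frac{p(p+1)}{2} - m' + p$: strictly below that threshold, $S \succeq 0$. Concretely, $\T_Y\calM$ decomposes into vertical directions $\{YA : A + A\transpose = 0\}$ (of dimension $\frac{p(p-1)}{2}$) and a horizontal complement (of dimension $np - m' - \frac{p(p-1)}{2}$); nonnegativity of the Hessian on the horizontal complement, combined with the facial dimension count, translates into a bound on the number of negative eigenvalues of $S$ that vanishes exactly when the hypothesis holds. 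Corollary~\ref{cor:YoptimalifSpsd} then delivers optimality as in the first case.

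The main obstacle is of course Theorem~\ref{thm:eigS} itself. In the rank-deficient case one exhibits free rank-one tangent vectors $vw\transpose$ from the null space of $Y\transpose$; when $Y$ is full rank, admissible tangent directions must satisfy $\calA(Y\eta\transpose + \eta Y\transpose) = 0$, which couples $v$ and $w$ nontrivially. Producing enough independent rank-one test directions to constrain the spectrum of $S$ requires a careful dimension count relating $\spann\{A_1 Y, \ldots, A_m Y\}$, the column space $\col(Y)$, and the facial structure of $\calC$ developed in Section~\ref{sec:geometryconvex} (tied to the Pataki--Barvinok bounds cited in the introduction). That counting step is the substantive content of Theorem~\ref{thm:eigS}; once it is in hand, Theorem~\ref{thm:deterministic} follows from the dichotomy above.
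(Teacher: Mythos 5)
Your proposal follows the paper's own route exactly: the rank-deficient case is the $vw\transpose$ tangent-vector argument of Proposition~\ref{prop:rankdeficientY}, the full-rank case is delegated to the spectral bound of Theorem~\ref{thm:eigS} (whose threshold $\dim\calF_{YY\transpose} < \frac{p(p+1)}{2}-m'+p$ you identify correctly), and Corollary~\ref{cor:YoptimalifSpsd} converts $S\succeq 0$ together with $SY=0$ into global optimality---which is precisely how the paper deduces Theorem~\ref{thm:deterministic}. You openly leave the interlacing/dimension-count proof of Theorem~\ref{thm:eigS} as a cited black box rather than proving it, but since the paper states that result separately and derives Theorem~\ref{thm:deterministic} from it in the same way, this is consistent with the paper's treatment.
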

Combining this theorem with bounds on the dimension of faces of $\calC$ allows us to conclude the optimality of second-order critical points for \emph{all} cost matrices $C$, with bounds on $p$ that are smaller than $n$.
Implications of these theorems for examples of SDPs are treated in Section~\ref{sec:applications}, including the trust-region subproblem, Max-Cut and Orthogonal-Cut.

\subsection*{Notation}

$\Snn$ is the set of real, symmetric matrices of size $n$. A symmetric matrix $X$ is positive semidefinite ($X \succeq 0$) if and only if $u\transpose X u \geq 0$ for all $u\in\Rn$. For matrices $A, B$, the standard Euclidean inner product is $\inner{A}{B} = \trace(A\transpose B)$. The associated (Frobenius) norm is $\|A\| = \sqrt{\inner{A}{A}}$. $\Id$ is the identity operator and $I_n$ is the identity matrix of size $n$. The variable $m' \leq m$ is defined in Assumption~\ref{assu:M}. The adjoint of $\calA$ is $\calA^*$, such that $\calA^*(\nu) = \nu_1 A_1 + \cdots + \nu_m A_m$.

\section{Geometry and optimality conditions}\label{sec:geometryandoptimconditions}

We first discuss the smooth geometry of~\eqref{eq:P} and the convex geometry of~\eqref{eq:SDP}, as well as optimality conditions for both.

\subsection{For the non-convex problem~\eqref{eq:P}}
\label{sec:geometryriemannian}

Endow $\Rnp$ with the classical Euclidean metric $\inner{U_1}{U_2} = \Trace(U_1\transpose U_2^{})$, corresponding to the Frobenius norm: $\|U\|^2 = \inner{U}{U}$. As stated in Proposition~\ref{prop:submanifold}, under Assumption~\ref{assu:M} for a given $p$, the search space $\calM$ of~\eqref{eq:P} defined in~\eqref{eq:calM} is a submanifold of $\Rnp$ of dimension $\dim \calM = np - m'$. 
Furthermore,
the tangent space to $\calM$ at $Y$ is a subspace of $\Rnp$ obtained by linearizing the equality constraints.
\begin{lemma}
	Under Assumption~\ref{assu:M}, the tangent space at $Y$ to $\calM$, $\T_Y\calM$, obeys
	\begin{align}
	\T_Y\calM & = \left\{ \dot Y \in \Rnp : \calA(\dot Y Y\transpose + Y \dot Y\transpose\,) = 0 \right\} \nonumber\\
	& = \left\{ \dot Y \in \Rnp : \innersmall{A_iY}{\dot Y} = 0 \textrm{ for } i = 1, \ldots, m \right\}.
	\label{eq:tangentspace} 
	\end{align}
\end{lemma}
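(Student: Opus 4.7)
The plan is to identify the tangent space by viewing $\calM$ as a level set of the constraint map and computing its differential, then matching dimensions to obtain the kernel characterization.

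First I would introduce the constraint map $h \colon \Rnp \to \Rm$ defined by $h(Y) = \calA(YY\transpose) - b$, so that $\calM = h^{-1}(0)$. A direct differentiation gives
\begin{align*}
\D h(Y)[\dot Y] = \calA\!\left( \dot Y Y\transpose + Y \dot Y\transpose \right),
\end{align*}
and expanding componentwise, using $A_i = A_i\transpose$,
\begin{align*}
\big(\D h(Y)[\dot Y]\big)_i = \inner{A_i}{\dot Y Y\transpose + Y \dot Y\transpose} = 2\inner{A_i Y}{\dot Y}.
\end{align*}
This already establishes that the two sets on the right-hand sides of~\eqref{eq:tangentspace} coincide, since each equation $\calA(\dot Y Y\transpose + Y\dot Y\transpose) = 0$ is equivalent to the system $\inner{A_i Y}{\dot Y} = 0$ for $i=1,\ldots,m$.

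It then remains to show $\T_Y\calM = \ker \D h(Y)$. The inclusion $\T_Y \calM \subseteq \ker \D h(Y)$ is routine: any smooth curve $\gamma$ in $\calM$ with $\gamma(0) = Y$ and $\gamma'(0) = \dot Y$ satisfies $h(\gamma(t)) \equiv 0$, so differentiating at $t=0$ gives $\D h(Y)[\dot Y] = 0$. The reverse inclusion is the part where Assumption~\ref{assu:M} does the real work, and I would handle it via a dimension-count argument leveraging Proposition~\ref{prop:submanifold}. Under Assumption~\ref{assu:M}a, the vectors $A_1 Y, \ldots, A_m Y$ are linearly independent, so $\D h(Y)$ has rank $m = m'$ and its kernel has dimension $np - m'$. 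Under Assumption~\ref{assu:M}b, the rank of $\D h$ equals $m'$ in an open neighborhood of $Y$, so again $\dim \ker \D h(Y) = np - m'$. In both cases, Proposition~\ref{prop:submanifold} gives $\dim \T_Y \calM = np - m'$, matching the dimension of the kernel; combined with the inclusion $\T_Y \calM \subseteq \ker \D h(Y)$, equality follows.

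The main subtlety, and the only place where the two cases of Assumption~\ref{assu:M} must be distinguished, is in arguing that $\calM$ really is a submanifold of dimension $np - m'$ with tangent space equal to $\ker \D h(Y)$; under 1a this is the standard submersion theorem, while under 1b it requires the constant-rank theorem applied in a neighborhood of $Y$. Since Proposition~\ref{prop:submanifold} already encapsulates this structural fact, the remaining work in this lemma is purely computational identification of the differential, which is why the proof reduces to the dimension match outlined above.
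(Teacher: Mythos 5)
Your proposal is correct and follows essentially the same route as the paper: the inclusion $\T_Y\calM \subseteq \ker \D h(Y)$ via differentiation of curves in $\calM$, followed by a dimension match with Proposition~\ref{prop:submanifold} to upgrade the inclusion to equality. Your additional remarks (the componentwise identity $\inner{A_i}{\dot Y Y\transpose + Y\dot Y\transpose} = 2\inner{A_iY}{\dot Y}$ and the explicit rank count under each case of Assumption~\ref{assu:M}) only make explicit what the paper leaves implicit.
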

\begin{proof}
	By definition, $\dot Y \in \Rnp$ is a tangent vector to $\calM$ at $Y$ if and only if there exists a curve $\gamma \colon \reals \to \calM$ such that $\gamma(0) = Y$ and $\dot \gamma(0) = \dot Y$, where $\dot \gamma$ is the derivative of $\gamma$. Then, $\calA(\gamma(t)\gamma(t)\transpose) = b$ for all $t$. Differentiating on both sides yields $\calA(\dot \gamma(t) \gamma(t)\transpose + \gamma(t) \dot \gamma(t)\transpose) = 0$. Evaluating at $t = 0$ confirms
	$\T_Y\calM$ is included in the subspace~\eqref{eq:tangentspace}.
	To conclude, use the fact that both subspaces have the same dimension under Assumption~\ref{assu:M}, by Proposition~\ref{prop:submanifold}.
\end{proof}
Each tangent space is equipped with a restriction of the metric $\inner{\cdot}{\cdot}$, thus making $\calM$ a \emph{Riemannian} submanifold of $\Rnp$.
From~\eqref{eq:tangentspace}, it is clear that the $A_iY$ span the normal space at $Y$:
\begin{align}
	\N_Y\calM & = \spann\{ A_1 Y, \ldots, A_m Y \}.
	\label{eq:normalspace}
\end{align}
%
%
An important tool is the orthogonal projector $\Proj_Y \colon \Rnp \to \T_Y\calM$:
\begin{align}
	\Proj_Y Z & = \argmin{\dot Y \in \T_Y\calM} \  \| \dot Y - Z \|.
	\label{eq:Proj}
\end{align}
We have the following lemma to characterize it.
\begin{lemma}\label{lem:Proj}
	Under Assumption~\ref{assu:M}, the orthogonal projector is given by:
	\begin{align*}
		\Proj_Y Z & = Z - \calA^*\!\left( G^\dagger \calA(ZY\transpose) \right) Y,
	\end{align*}
	where $\calA^* \colon \Rm \to \Snn$ is the adjoint of $\calA$, $G = G(Y)$ is a Gram matrix defined by $G_{ij} = \inner{A_iY}{A_jY}$, and $G^\dagger$ denotes the Moore--Penrose pseudo-inverse of $G$. Furthermore, if $Y \mapsto Z(Y)$ is differentiable in an open neighborhood of $\calM$ in $\Rnp$, then $Y \mapsto \Proj_Y Z(Y)$ is differentiable at all $Y$ in $\calM$.
\end{lemma}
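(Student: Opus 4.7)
The plan is to decompose $Z$ orthogonally as $Z = T + N$, with $T = \Proj_Y Z \in \T_Y\calM$ and $N \in \N_Y\calM$. Since $\N_Y\calM = \spann\{A_1Y,\ldots,A_mY\}$ by~\eqref{eq:normalspace}, I write $N = \calA^*(\nu)Y = \sum_{i=1}^m \nu_i A_iY$ for some $\nu \in \Rm$. The orthogonality conditions $\inner{A_jY}{Z-N} = 0$ for $j = 1,\ldots,m$ then reduce to the linear system $G\nu = c$, where $G_{ij} = \inner{A_iY}{A_jY}$ and $c_j = \inner{A_jY}{Z}$. Using the symmetry of $A_j$, one verifies $\inner{A_jY}{Z} = \inner{A_j}{ZY\transpose} = \calA(ZY\transpose)_j$, so that $c = \calA(ZY\transpose)$.

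I would then show this system is always consistent, even when $G$ is singular (the potentially relevant case under Assumption~\ref{assu:M}b). Since $G \succeq 0$, we have $\im(G) = \ker(G)^\perp$. For any $\alpha \in \ker G$, the identity $0 = \alpha\transpose G \alpha = \|\calA^*(\alpha)Y\|^2$ yields $\calA^*(\alpha)Y = 0$, hence
\begin{align*}
\inner{\alpha}{\calA(ZY\transpose)} = \inner{\calA^*(\alpha)}{ZY\transpose} = \inner{\calA^*(\alpha)Y}{Z} = 0,
\end{align*}
so $\calA(ZY\transpose) \in \im(G)$ and $\nu = G^\dagger \calA(ZY\transpose)$ solves the system. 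Any two solutions differ by an element of $\ker G$, which is precisely the kernel of the linear map $\nu \mapsto \calA^*(\nu)Y$; therefore the normal component $N = \calA^*(G^\dagger \calA(ZY\transpose))Y$ is independent of which solution is picked, establishing the claimed formula $\Proj_Y Z = Z - N$.

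For the differentiability claim, under Assumption~\ref{assu:M}a the Gram matrix $G(Y)$ is positive definite at each $Y \in \calM$, hence remains invertible in an open neighborhood by continuity, and $Y \mapsto G(Y)^{-1} = G(Y)^\dagger$ is smooth there. Under Assumption~\ref{assu:M}b, $\rank G(Y) = m'$ is constant in a neighborhood of $\calM$; it is a standard fact that the Moore--Penrose pseudo-inverse depends smoothly on its argument on any open set where the rank is locally constant. Combining this with smoothness of the linear maps $\calA, \calA^*$ and of matrix multiplication, $Y \mapsto \Proj_Y Z(Y)$ inherits differentiability from $Y \mapsto Z(Y)$ at every $Y \in \calM$. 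The main subtlety is exactly this last point: under Assumption~\ref{assu:M}b the Gram matrix can fail to be invertible, and the constant-rank condition is what is needed both to make the formula via $G^\dagger$ well-defined (independent of the choice of solution to $G\nu = c$) and to preserve smoothness of the pseudo-inverse in a neighborhood of $\calM$.
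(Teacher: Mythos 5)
Your proof is correct and follows essentially the same route as the paper: write the normal component as $\calA^*(\nu)Y$, reduce the orthogonality conditions to the linear system $G\nu = \calA(ZY\transpose)$, and invoke constant rank of $G$ near $\calM$ for differentiability of $G^\dagger$. Your explicit verification that the system is consistent and that the normal component is independent of the chosen solution when $G$ is singular is a welcome bit of extra care that the paper leaves implicit with ``any $\mu$ satisfying $G\mu = \calA(ZY\transpose)$ will do.''
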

\begin{proof}
Orthogonal projection is along the normal space, so that $\Proj_Y Z \in \T_Y\calM$ and $Z - \Proj_Y Z \in \N_Y\calM$~\eqref{eq:normalspace}. From the latter we infer there exists $\mu\in\Rm$ such that
\begin{align*}
	Z - \Proj_Y Z = \sum_{i = 1}^m \mu_i A_i Y = \calA^*(\mu) Y,
\end{align*}
since the adjoint of $\calA$ is $\calA^*(\mu) = \mu_1 A_1 + \cdots + \mu_m A_m$ by definition.
Multiply on the right by $Y\transpose$ and apply $\calA$ to obtain
%
%
%
%
\begin{align*}
	\calA(ZY\transpose) = \calA( \calA^*(\mu) Y Y\transpose ),
\end{align*}
where we used $\calA(\Proj_Y(Z) Y\transpose) = 0$ since $\Proj_Y(Z) \in \T_Y\calM$. The right-hand side expands into
\begin{align*}
	\calA( \calA^*(\mu) Y Y\transpose )_i = \inner{A_i}{\sum_{j=1}^m \mu_j A_j YY\transpose} = \sum_{j=1}^m \inner{A_i Y}{A_j Y} \mu_j = (G\mu)_i.
\end{align*}
Thus, any $\mu$ satisfying $G\mu = \calA(ZY\transpose)$ will do.
Without loss of generality, we pick the smallest norm solution: $\mu = G^\dagger \calA(ZY\transpose)$.
The function $Y \mapsto G^\dagger$ is continuous and differentiable at $Y\in\calM$ provided $G$ has constant rank in an open neighborhood of $Y$ in $\Rnp$~\cite[Thm.~4.3]{golub1973differentiation}, which is the case under Assumption~\ref{assu:M}.
\end{proof}


Problem~\eqref{eq:P} minimizes 
\begin{align}
	g(Y) & = \inner{CY}{Y}
	\label{eq:g}
\end{align}
over $\calM$, where $g$ is defined over $\Rnp$. Its classical (Euclidean) gradient at $Y$ is $\nabla g(Y) = 2CY$. The Riemannian gradient of $g$ at $Y$, $\grad g(Y)$, is defined as the unique tangent vector at $Y$ such that, for all tangent $\dot Y$, 
$
\innersmall{\grad g(Y)}{\dot Y} = \innersmall{\nabla g(Y)}{\dot Y}. 
$
This is given by the projection of the classical gradient onto the tangent space~\citep[eq.\,(3.37)]{AMS08}:
\begin{align*}
	\grad g(Y) & = \Proj_Y\left(\nabla g(Y)\right) = 2\,\Proj_Y \left(C Y \right) = 2\left(C - \calA^*\!\left( G^\dagger \calA(CYY\transpose) \right)\right)Y.
\end{align*}
This motivates the definition of $S$ as follows, with $G_{ij} = \inner{A_i Y}{A_j Y}$:
\begin{align}
	S = S(Y) = S(YY\transpose) & = C - \calA^*\!\left( \mu \right), & \textrm{ with } & & \mu & = G^\dagger \calA(CYY\transpose).
	\label{eq:S}
\end{align}
This is indeed well defined since $G_{ij}$ is a function of $YY\transpose$.
We get a convenient formula for the gradient:
\begin{align}
	\grad g(Y) & = 2SY.
	\label{eq:gradg}
\end{align}
In the sequel, $S$ will play a major role.

Turning toward second-order derivatives, the Riemannian Hessian of $g$ at $Y$ is a symmetric operator on the tangent space at $Y$ obtained as the projection of the derivative of the Riemannian gradient vector field~\citep[eq.\,(5.15)]{AMS08}. The latter is indeed differentiable owing to Lemma~\ref{lem:Proj}. With $\D$ denoting classical Fr\'echet differentiation, writing $S = S(Y)$ and $\dot S = \D( Y \mapsto S(Y))(Y)[\dot Y]$,
\begin{align}
	\Hess g(Y)[\dot Y] & = \Proj_Y\!\left( \D\grad g(Y)[\dot Y] \right) 
	= 2\Proj_Y\!\big( \dot S Y + S \dot Y \big)
	= 2\Proj_Y\!\big( S\dot Y \big).
	\label{eq:Hessg}
\end{align}
The projection of $\dot S Y$ vanishes because $\dot S = \calA^*(\nu)$ for some $\nu \in \Rm$ so that $\dot S Y = \sum_{i = 1}^m \nu_i A_i Y$ is in the normal space at $Y$~\eqref{eq:normalspace}.

These differentials are relevant for their role in necessary optimality conditions of~\eqref{eq:P}.
\begin{definition}\label{def:critical}
	$Y\in\calM$ is a \emph{(first-order) critical point} for~\eqref{eq:P} if
	\begin{align}
		\frac{1}{2} \grad g(Y) = SY = 0,
		\label{eq:firstordercondition}
	\end{align}
	where $S$ is a function of $Y$~\eqref{eq:S}. If furthermore $\Hess g(Y) \succeq 0$, that is (using the fact that $\Proj_Y$ is self-adjoint),
	\begin{align}
		\forall \dot Y \in \T_Y\calM, \quad \frac{1}{2} \innersmall{\dot Y}{\Hess g(Y)[\dot Y]} = \innersmall{\dot Y}{S\dot Y} \geq 0,
		\label{eq:secondorderconditionbis}
	\end{align}
	then $Y$ is a \emph{second-order critical point} for~\eqref{eq:P}.
\end{definition}
\begin{proposition}
	Under Assumption~\ref{assu:M}, all local (and global) minima of~\eqref{eq:P} are second-order critical points.
\end{proposition}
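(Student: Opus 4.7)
My plan is to reduce the proposition to the classical necessary optimality conditions for smooth optimization on an embedded submanifold, which Proposition~\ref{prop:submanifold} has already made available under Assumption~\ref{assu:M}. Concretely, given a local minimum $Y^* \in \calM$ and any tangent vector $\dot Y \in \T_{Y^*}\calM$, I would invoke the characterization of tangent vectors already recorded above: there exists a smooth curve $\gamma\colon(-\varepsilon,\varepsilon)\to\calM$ with $\gamma(0)=Y^*$ and $\dot\gamma(0)=\dot Y$. Since $Y^*$ is locally optimal for $g$ on $\calM$, the scalar function $\varphi(t)=g(\gamma(t))$ has a local minimum at $t=0$, so $\varphi'(0)=0$ and $\varphi''(0)\geq 0$. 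This reduces everything to a standard calculation in one variable.

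For the first-order condition, I would simply expand $\varphi'(0)=\innersmall{\nabla g(Y^*)}{\dot Y}=\innersmall{\grad g(Y^*)}{\dot Y}$, the second equality using that $\nabla g(Y^*)-\grad g(Y^*)$ lies in $\N_{Y^*}\calM$. Since this vanishes for every $\dot Y \in \T_{Y^*}\calM$ and $\grad g(Y^*)$ itself is tangent, $\grad g(Y^*)=0$, i.e.\ $SY^*=0$ by~\eqref{eq:gradg}, which is~\eqref{eq:firstordercondition}.

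For the second-order condition, I would compute $\varphi''(0) = \innersmall{\nabla^2 g(Y^*)[\dot Y]}{\dot Y} + \innersmall{\nabla g(Y^*)}{\ddot\gamma(0)}$. Since $g(Y)=\innersmall{CY}{Y}$, the Euclidean Hessian contributes $2\innersmall{C\dot Y}{\dot Y}$. For the second term, at the critical point $\nabla g(Y^*)=2\calA^*(\mu)Y^*$ with $\mu$ as in~\eqref{eq:S}, so this term equals $2\sum_i \mu_i \innersmall{A_i Y^*}{\ddot\gamma(0)}$. To eliminate $\ddot\gamma(0)$, I would differentiate the constraint $\calA(\gamma(t)\gamma(t)\transpose)=b$ twice and evaluate at $t=0$ to obtain $\innersmall{A_iY^*}{\ddot\gamma(0)} = -\innersmall{A_i\dot Y}{\dot Y}$. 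Substituting gives $\varphi''(0)=2\innersmall{\dot Y}{(C-\calA^*(\mu))\dot Y}=2\innersmall{\dot Y}{S\dot Y}\geq 0$, which is exactly~\eqref{eq:secondorderconditionbis}. (This also recovers the intrinsic formula~\eqref{eq:Hessg}, since on a Riemannian submanifold the projection of $\D\grad g$ equals the Hessian, and $\Proj_Y$ is self-adjoint on the tangent space.)

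The only subtlety, and the one thing I would pause on, is making sure Assumption~\ref{assu:M} is genuinely doing the work it needs to do: it is what guarantees via Proposition~\ref{prop:submanifold} that $\calM$ really is a smooth manifold at $Y^*$, so that tangent curves in $\calM$ exist for every prescribed tangent vector and the Riemannian gradient and Hessian formulas~\eqref{eq:gradg}–\eqref{eq:Hessg} are valid in a neighborhood of $Y^*$. Without this, the one-variable reduction above would not apply. Once that is granted, the argument is essentially bookkeeping, and one could just as well cite~\citep[Prop.\,4.1 and Prop.\,5.5.4]{AMS08} for the corresponding general statement about $C^2$ cost functions on Riemannian manifolds.
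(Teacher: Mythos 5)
Your proof is correct and establishes exactly what the paper asserts; the paper itself disposes of this proposition in one line by citing the standard first- and second-order necessary conditions for optimization on manifolds (Yang et al.), whereas you derive those conditions from scratch via curves. Your curve-based computation — in particular using the twice-differentiated constraint to eliminate $\ddot\gamma(0)$ and recover $\varphi''(0)=2\innersmall{\dot Y}{S\dot Y}$ — is the standard argument behind the cited result, so the two proofs are essentially the same in substance, with yours being self-contained.
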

\begin{proof}
	These are standard necessary optimality conditions on manifolds, see~\citep[Rem. 4.2 and Cor. 4.2]{yang2012optimality}.
\end{proof}
Thus, the central role of $S$ in necessary optimality conditions for the non-convex problem is clear. Its role for the convex problem is elucidated next.



\subsection{For the convex problem~\eqref{eq:SDP}} \label{sec:geometryconvex}

The search space of~\eqref{eq:SDP} is the convex set $\calC$ defined in~\eqref{eq:calC}, assumed non-empty.
Geometry-wise, we are primarily interested in the facial structure of $\calC$~\citep[\S18]{rockafellar1997convex}.
\begin{definition}\label{def:face}
	A \emph{face} of $\calC$ is a convex subset $\calF$ of $\calC$ such that every (closed) line segment in $\calC$ with a relative interior point in $\calF$ has both endpoints in $\calF$. The empty set and $\calC$ itself are faces of $\calC$. 
\end{definition}
For example, the non-empty faces of a cube are its vertices, edges, facets and the cube itself.
By~\citep[Thm.\,18.2]{rockafellar1997convex}, the collection of relative interiors of the non-empty faces forms a partition of $\calC$ (the relative interior of a singleton is the singleton). That is, each $X\in\calC$ is in the relative interior of exactly one face of $\calC$, called $\calF_X$. 
The dimension of a face is the dimension of the lowest dimensional affine subspace which contains that face.
Of particular interest are the zero-dimensional faces of $\calC$ (singletons).
\begin{definition}
	$X\in\calC$ is an \emph{extreme point} of $\calC$ if $\dim \calF_X = 0$.
\end{definition}
In other words, $X$ is extreme if it does not lie on an open line segment included in $\calC$. If $\calC$ is compact, it is the convex hull of its extreme points~\citep[Cor.\,18.5.1]{rockafellar1997convex}.
Of importance to us, if $\calC$ is compact, \eqref{eq:SDP} always attains its minimum at one of its extreme points since the linear cost function of~\eqref{eq:SDP} is (a fortiori) concave~\citep[Cor.\,32.3.2]{rockafellar1997convex}.
%
The faces of $\calC$	 can be described explicitly as follows. The proof is in Appendix~\ref{apdx:faces}.
\begin{proposition} \label{prop:faces} 
	Let $X\in\calC$ have rank $p$ and let $\calF_X$ be its associated face (that is, $X$ is in the relative interior of $\calF_X$.) Then, with $Y\in\calM_p$ such that $X = YY\transpose$, 
	\begin{align}
		\calF_X & = \left\{ X' = Y(I_p + A)Y\transpose : A \in \ker \calL_X \textrm{ and } I_p + A \succeq 0\right\},
	\label{eq:calF}
	\end{align}
	where $\calL_X \colon \Spp \to \Rm$ is defined by:
	\begin{align}
		\calL_X(A) & = \calA(YAY\transpose) = \begin{pmatrix}
	\inner{Y\transpose A_1 Y}{A}, \ldots, \inner{Y\transpose A_m Y}{A}
	\end{pmatrix}\transpose.
	\label{eq:calL}
	\end{align}
\end{proposition}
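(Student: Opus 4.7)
The plan is to reduce the characterization of $\calF_X$ to the more tractable column-space condition $\col(X') \subseteq \col(Y)$, and then rewrite that condition in the parameterized form $X' = Y(I_p + A)Y\transpose$ using the affine constraint. The central claim I will establish is:
\begin{align*}
\calF_X \; = \; \left\{ X' \in \calC : \col(X') \subseteq \col(Y) \right\},
\end{align*}
after which the parameterization follows by elementary linear algebra.

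For the \emph{if} direction, suppose $X' \in \calC$ with $\col(X') \subseteq \col(Y)$. Since $Y$ has full column rank $p$, write $X' = YBY\transpose$ with $B \in \Spp$, $B \succeq 0$. Consider $X'' := X + t(X - X') = Y(I_p + t(I_p - B))Y\transpose$. For $t > 0$ sufficiently small, $I_p + t(I_p - B) \succeq 0$, so $X'' \succeq 0$; moreover $\calA(X'') = (1+t)b - tb = b$, so $X'' \in \calC$. Since $X$ lies in the relative interior of the segment $[X', X'']$, any face of $\calC$ whose relative interior contains $X$ must contain both endpoints, hence $X' \in \calF_X$. For the \emph{only if} direction, suppose $X' \in \calF_X$. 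Then there exists a segment $[X', X''] \subseteq \calC$ with $X$ in its relative interior, say $X = \alpha X' + (1-\alpha) X''$ with $0 < \alpha < 1$. Since $X'' \succeq 0$, this yields $X \succeq \alpha X'$. Hence $\ker(X) \subseteq \ker(X')$ (if $u\in \ker X$ then $u\transpose X' u = 0$ and $X' \succeq 0$ force $X'u = 0$), equivalently $\col(X') \subseteq \col(X) = \col(Y)$.

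With the column-space characterization in hand, every $X' \in \calF_X$ can be written uniquely as $X' = YBY\transpose$ for some $B \in \Spp$ with $B \succeq 0$, and conversely any such $X'$ with $\calA(YBY\transpose) = b$ lies in $\calF_X$. Setting $A = B - I_p$, the condition $B \succeq 0$ becomes $I_p + A \succeq 0$, and the affine constraint becomes
\begin{align*}
0 = \calA(YBY\transpose) - \calA(YY\transpose) = \calA(YAY\transpose) = \calL_X(A),
\end{align*}
i.e.\ $A \in \ker \calL_X$, which is exactly~\eqref{eq:calF}. The main obstacle is the first direction in step two, where one must produce a witness segment whose other endpoint still lies in $\calC$; the trick is that moving along $X - X'$ inside $\col(Y)$ preserves the affine constraint automatically and preserves positive semidefiniteness for small $t$ precisely because $Y$ has full column rank, which is the only place where the hypothesis $X \in \calM_p$ (not merely $X \in \calC$) enters.
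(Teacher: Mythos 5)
Your proof is correct, and its overall skeleton matches the paper's: two inclusions, with the forward one built on the very same segment $[X', X'']$ where $X'' = Y(I_p - tA)Y\transpose$, and both reverse directions resting on the standard fact that a relative interior point of a convex set can be pushed slightly past any other point of the set (Rockafellar, Thm.~6.4 — worth citing explicitly). The difference is in how the reverse inclusion is executed. The paper extends $Y$ to a basis $M = \begin{bmatrix} Y & Y_\perp \end{bmatrix}$ of $\Rn$, writes $W = X' - X$ in block coordinates, and kills the blocks outside the $\col(Y)$ corner using positive semidefiniteness of both $X + W$ and $X - tW$ together with a Schur-complement step. You instead isolate the clean intermediate characterization $\calF_X = \{X' \in \calC : \col(X') \subseteq \col(Y)\}$ and derive the column-space containment directly from the order relation $X \succeq \alpha X'$, via $\ker X \subseteq \ker X'$. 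This is coordinate-free and arguably tidier; it also makes transparent the general principle that faces of the PSD cone through $X$ are cut out by the column space of $X$. One small correction to your closing remark: the full column rank of $Y$ is what lets you write $X' = YBY\transpose$ (uniquely) and gives $\col(X) = \col(Y)$; the positive semidefiniteness of $X''$ for small $t$ needs only $I_p + t(I_p - B) \succeq 0$ and has nothing to do with the rank of $Y$.
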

Thus, the dimension of $\calF_X$ is the dimension of the kernel of $\calL_X$. Since the dimension of $\Spp$ is $\frac{p(p+1)}{2}$ and $\rank(\calL_X) \leq m'$,
the rank-nullity theorem gives a lower bound: 
\begin{align}
	\dim \calF_X = \frac{p(p+1)}{2} - \rank \calL_X \geq \frac{p(p+1)}{2} - m'.
	\label{eq:dimF}
\end{align}
%
For extreme points, $\dim \calF_X = 0$; then, $\frac{p(p+1)}{2} = \rank \calL_X \leq m'$. Solving for $p$ (the rank of $X$) shows extreme points have small rank, namely,
\begin{align}
	\dim \calF_X = 0 \implies \rank(X) \leq p^* \triangleq \frac{\sqrt{8m' + 1} - 1}{2}.
	\label{eq:patakirank}
\end{align}
Since~\eqref{eq:SDP} attains its minimum at an extreme point for compact $\calC$, we recover the known fact that one of the optima has rank at most $p^*$. This approach to proving that statement is well known~\citep[Thm.\,2.1]{pataki1998rank}.

Optimality conditions for~\eqref{eq:SDP} are easily stated once $S$~\eqref{eq:S} is introduced---it acts as a dual certificate, known in closed form owing to the underlying smooth geometry of $\calM$. We need a first
general fact about SDPs (Assumption~\ref{assu:M} is not required.) 
\begin{proposition}\label{prop:XoptifSpsd}
	Let $X\in\calC$ and let $S = C - \calA^*(\nu)$ for some $\nu\in\Rm$ (as is the case in~\eqref{eq:S} for example). If $S \succeq 0$ and $\inner{S}{X} = 0$, then $X$ is optimal for~\eqref{eq:SDP}.
\end{proposition}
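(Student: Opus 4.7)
The plan is to establish this by a standard weak-duality style argument: compare $\inner{C}{X}$ to $\inner{C}{X'}$ for an arbitrary feasible $X' \in \calC$, and use the decomposition $C = S + \calA^*(\nu)$ to extract a nonnegative quantity.

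Concretely, I would first rewrite the cost at any $X' \in \calC$ using the hypothesis:
\begin{align*}
\inner{C}{X'} = \inner{S + \calA^*(\nu)}{X'} = \inner{S}{X'} + \inner{\nu}{\calA(X')} = \inner{S}{X'} + \inner{\nu}{b},
\end{align*}
where the middle equality uses the defining adjoint relation $\inner{\calA^*(\nu)}{X'} = \inner{\nu}{\calA(X')}$, and the last uses feasibility $\calA(X') = b$. Applying the same identity at $X$ and invoking the complementary-slackness hypothesis $\inner{S}{X} = 0$ yields $\inner{C}{X} = \inner{\nu}{b}$.

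Subtracting gives $\inner{C}{X'} - \inner{C}{X} = \inner{S}{X'}$. To conclude it suffices to note that $\inner{S}{X'} \geq 0$ whenever $S \succeq 0$ and $X' \succeq 0$; this is the well-known fact that the trace inner product of two positive semidefinite matrices is nonnegative (e.g., write $S = LL\transpose$ and observe $\inner{S}{X'} = \Trace(L\transpose X' L) \geq 0$ since $L\transpose X' L \succeq 0$). Since $X' \in \calC$ was arbitrary, $X$ achieves the minimum.

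There is no real obstacle: the statement is the basic weak-duality / KKT sufficiency for SDPs, and the only ingredients are the adjoint identity, feasibility, and positivity of $\inner{\cdot}{\cdot}$ on the PSD cone. Assumption~\ref{assu:M} is explicitly not needed, and indeed nothing about the smooth manifold structure enters—only the convex feasibility of $X'$ and the sign conditions on $S$ and $X$.
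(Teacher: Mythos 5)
Your proof is correct and follows essentially the same weak-duality argument as the paper: both decompose $C = S + \calA^*(\nu)$, use the adjoint identity and feasibility to reduce everything to $\inner{\nu}{b}$, and conclude via nonnegativity of $\inner{S}{X'}$ for PSD matrices. The only difference is cosmetic (you justify the PSD trace inequality explicitly via $S = LL\transpose$, which the paper leaves implicit).
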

\begin{proof}
	First, use $S \succeq 0$: for any $X' \in \calC$, since $X' \succeq 0$ and $\calA(X) = \calA(X')$,
	\begin{align*}
		0 \leq \innersmall{S}{X'} = \innersmall{C}{X'} - \innersmall{\calA^*(\nu)}{X'} 
		 = \innersmall{C}{X'} - \innersmall{\nu}{\calA(X)}.
	\end{align*}
	Concentrating on the last term, use $\inner{S}{X} = 0$:
	\begin{align*}
		\inner{\nu}{\calA(X)} = \inner{\calA^*(\nu)}{X} = \inner{C}{X} - \inner{S}{X} = \inner{C}{X}.
	\end{align*}
	Hence, $\inner{C}{X} \leq \inner{C}{X'}$, which shows $X$ is optimal.
\end{proof}
Since~\eqref{eq:SDP} is a relaxation of~\eqref{eq:P}, this leads to a corollary of prime importance.
\begin{corollary} \label{cor:YoptimalifSpsd}
	Let Assumption~\ref{assu:M} hold for some $p$.
	If $Y$ is a critical point for~\eqref{eq:P} as defined by~\eqref{eq:firstordercondition} and $S$~\eqref{eq:S} is positive semidefinite, then $X = YY\transpose$ is globally optimal for~\eqref{eq:SDP} and $Y$ is globally optimal for~\eqref{eq:P}.
\end{corollary}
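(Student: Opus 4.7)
The plan is to reduce Corollary~\ref{cor:YoptimalifSpsd} directly to Proposition~\ref{prop:XoptifSpsd} by checking its hypotheses one by one for $X = YY\transpose$ and the matrix $S$ from~\eqref{eq:S}.

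First I would verify feasibility: since $Y\in\calM$, we have $\calA(YY\transpose) = b$ by~\eqref{eq:calM}, and $X = YY\transpose \succeq 0$ automatically, so $X\in\calC$. Next, observe that $S$ is of the form $C - \calA^*(\mu)$ with $\mu = G^\dagger \calA(CYY\transpose) \in\Rm$ by definition, so $S$ has exactly the dual-certificate structure required in Proposition~\ref{prop:XoptifSpsd}. Positive semidefiniteness of $S$ is given by hypothesis. The complementary slackness condition is the only nonobvious one, and it follows immediately from the first-order critical point condition $SY = 0$ in~\eqref{eq:firstordercondition}: indeed
\begin{align*}
\inner{S}{X} = \inner{S}{YY\transpose} = \trace(Y\transpose S Y) = \inner{Y}{SY} = 0.
\end{align*}
Proposition~\ref{prop:XoptifSpsd} then yields that $X = YY\transpose$ is globally optimal for~\eqref{eq:SDP}.

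To conclude that $Y$ is globally optimal for~\eqref{eq:P}, I would use that~\eqref{eq:P} is a restriction of~\eqref{eq:SDP}: for any $Y'\in\calM$, the matrix $X' = Y'(Y')\transpose$ lies in $\calC$ and satisfies $g(Y') = \inner{CY'}{Y'} = \inner{C}{X'} \geq \inner{C}{X} = \inner{CY}{Y} = g(Y)$, with the middle inequality being the optimality of $X$ for~\eqref{eq:SDP} established above.

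There is really no main obstacle here; the work is entirely front-loaded into Proposition~\ref{prop:XoptifSpsd} and into the explicit construction of $S$ from the smooth geometry of $\calM$ (via Lemma~\ref{lem:Proj}), which together make the dual certificate available in closed form as soon as a first-order critical point is identified. The corollary is essentially the observation that the manifold structure furnishes a free Lagrange multiplier $\mu$ realizing weak duality with zero gap whenever $S\succeq 0$.
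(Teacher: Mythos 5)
Your proof is correct and is exactly the paper's argument: $SY=0$ gives $\inner{S}{X}=0$, Proposition~\ref{prop:XoptifSpsd} then certifies optimality of $X=YY\transpose$ for~\eqref{eq:SDP}, and optimality of $Y$ for~\eqref{eq:P} follows because~\eqref{eq:SDP} is a relaxation of~\eqref{eq:P}. The extra details you spell out (feasibility, the form of $S$, the trace identity) are exactly the ones the paper leaves implicit.
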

\begin{proof}
	Since $Y$ is a critical point, $SY = 0$; thus, $\inner{S}{X} = 0$ and Proposition~\ref{prop:XoptifSpsd} applies.
\end{proof}
%
A converse of Proposition~\ref{prop:XoptifSpsd} holds under additional conditions which are satisfied by all examples in Section~\ref{sec:applications}. Thus, for those cases, for a critical point $Y$, $YY\transpose$ is optimal if and only if $S$ is positive semidefinite. We state it here for completeness (this result is not needed in the sequel.)
\begin{proposition}\label{prop:XoptimalImpliesSpsd}
	Let $X\in\calC$ be a global optimum of~\eqref{eq:SDP} and assume strong duality holds.
	Let Assumption~\ref{assu:M}a hold with $p = \rank(X)$.
	Then, $S \succeq 0$ and $\inner{S}{X} = 0$, where $S = S(X)$ is as in~\eqref{eq:S}.
\end{proposition}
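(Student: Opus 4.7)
The plan is to first observe that any factor $Y$ of $X$ (with $p = \rank(X)$) is a global optimum of~\eqref{eq:P}, then to use strong duality to obtain a positive semidefinite dual certificate $S^\star$, and finally to show that this $S^\star$ coincides with the specific $S = S(Y)$ defined in~\eqref{eq:S}. The key role of Assumption~\ref{assu:M}a will be to make the Gram matrix $G$ invertible, thereby pinning down the Lagrange multiplier uniquely.

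First I would take any $Y \in \Rnp$ with $YY\transpose = X$ and $p = \rank(X)$. Since $\calA(YY\transpose) = \calA(X) = b$, such a $Y$ lies in $\calM_p$. For any competitor $Y' \in \calM_p$, the matrix $X' = Y'Y'\transpose$ lies in $\calC$, so $g(Y') = \inner{C}{X'} \geq \inner{C}{X} = g(Y)$ by global optimality of $X$. Hence $Y$ is globally optimal for~\eqref{eq:P} and, in particular, satisfies the first-order condition~\eqref{eq:firstordercondition}: $SY = 0$. This already yields $\inner{S}{X} = \inner{SY}{Y} = 0$, settling the easier half of the claim.

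Next, strong duality provides a dual optimizer $\nu^\star \in \Rm$ with $S^\star := C - \calA^*(\nu^\star) \succeq 0$ and $\inner{C}{X} = \inner{\nu^\star}{b}$. Since $\calA(X) = b$, complementary slackness follows: $\inner{S^\star}{X} = \inner{C}{X} - \inner{\nu^\star}{\calA(X)} = 0$. Writing $X = YY\transpose$, this reads $\trace(Y\transpose S^\star Y) = 0$ with $Y\transpose S^\star Y \succeq 0$, forcing $S^\star Y = 0$, i.e.\ $CY = \calA^*(\nu^\star) Y = \sum_i \nu^\star_i A_i Y$.

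The final step is to identify $\nu^\star$ with the $\mu$ appearing in~\eqref{eq:S}. Taking Frobenius inner products of both sides of $CY = \sum_i \nu^\star_i A_i Y$ with $A_j Y$ yields $\calA(CYY\transpose)_j = \inner{A_j Y}{CY} = (G\nu^\star)_j$, hence $G\nu^\star = \calA(CYY\transpose)$. Here Assumption~\ref{assu:M}a at $p = \rank(X)$ enters decisively: linear independence of $A_1 Y, \ldots, A_m Y$ makes $G$ positive definite and invertible, so $\nu^\star = G^{-1} \calA(CYY\transpose) = G^\dagger \calA(CYY\transpose) = \mu$. Therefore $S = C - \calA^*(\mu) = S^\star \succeq 0$. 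The main obstacle is purely conceptual rather than technical: one must recognize that without Assumption~\ref{assu:M}a the dual could have multiple optima, and the pseudo-inverse formula selecting $\mu$ need not then land on a PSD-making dual optimizer. Once that is seen, the calculation is routine.
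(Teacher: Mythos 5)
Your proof is correct and follows essentially the same route as the paper: extract a PSD dual certificate $S^\star = C - \calA^*(\nu^\star)$ from strong duality, use complementary slackness to get $S^\star Y = 0$, and then use the invertibility of $G$ under Assumption~\ref{assu:M}a to conclude $\nu^\star = G^\dagger\calA(CX) = \mu$, hence $S = S^\star$. Your opening step deriving $\inner{S}{X}=0$ from first-order criticality of $Y$ is valid but redundant, since that identity follows immediately once $S = S^\star$ is established.
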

\begin{proof}
	Consider the dual of~\eqref{eq:SDP}:
	\begin{align}
	\max_{\nu \in \Rm} \inner{b}{\nu} \st C- \calA^*(\nu) \succeq 0.
	\tag{DSDP}
	\label{eq:DSDP}
	\end{align}
	Since we assume strong duality and $X$ is optimal,
	there exists $\nu$ optimal for the dual such that $\inner{C}{X} = \inner{b}{\nu}$. Using $\inner{b}{\nu} = \inner{\calA(X)}{\nu} = \inner{X}{\calA^*(\nu)}$, this implies
	\begin{align*}
	0 = \inner{C}{X} - \inner{b}{\nu} = \inner{C-\calA^*(\nu)}{X}.
	\end{align*}
	Since both $C-\calA^*(\nu)$ and $X$ are positive semidefinite, $(C-\calA^*(\nu))X = 0$. As a result, by definition of $\mu$ and $G$~\eqref{eq:S},
	\begin{align*}
		\mu = G^\dagger \calA(CX) = G^\dagger \calA( \calA^*(\nu) X ) = G^\dagger G \nu = \nu,
	\end{align*}
	where we used $G^\dagger = G^{-1}$ under Assumption~\ref{assu:M}a and
	\begin{align*}
		(G\nu)_i & = \sum_j G_{ij} \nu_j = \sum_j \inner{A_i}{A_j X} \nu_j 
		 = \inner{A_i}{\calA^*(\nu)X} = \calA(\calA^*(\nu)X)_i.
	\end{align*}
	%
	%
	Thus, $S = C - \calA^*(\mu) = C - \calA^*(\nu)$ has the desired properties. This concludes the proof, and shows uniqueness of the dual certificate.
	%
\end{proof}

\section{Optimality of second-order critical points}



We aim to show that second-order critical points of~\eqref{eq:P} are global optima, provided $p$ is sufficiently large. To this end, we first recall a known result about \emph{rank-deficient} second-order critical points.\footnote{Optimality of rank deficient \emph{local optima} is shown (under different assumptions) in~\citep{burer2005local,journee2010low}, with the proof in~\citep{journee2010low} actually only requiring second-order criticality.}
\begin{proposition} \label{prop:rankdeficientY} 
	Let Assumption~\ref{assu:M} hold for some $p$ and
	let $Y\in\calM$ be a second-order critical point for~\eqref{eq:P}. If $\rank(Y) < p$, then $S(Y) \succeq 0$ so that $Y$ is globally optimal for~\eqref{eq:P} and so is $X = YY\transpose$ for~\eqref{eq:SDP}.
\end{proposition}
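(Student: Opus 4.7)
The plan is to exploit the rank deficiency of $Y$ to manufacture enough tangent directions to probe $S$ in every direction of $\Rn$, and then invoke Corollary~\ref{cor:YoptimalifSpsd} to conclude. The first-order condition $SY=0$ is already in hand, so the content of the proof is showing $S \succeq 0$.

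First I would extract a nonzero $z \in \Rp$ with $Yz = 0$, which exists precisely because $\rank(Y) < p$. For an arbitrary $v \in \Rn$, I would form the rank-one candidate direction $\dot Y = v z\transpose \in \Rnp$ and verify it lies in $\T_Y\calM$ using the characterization~\eqref{eq:tangentspace}: a direct computation gives $\dot Y Y\transpose + Y \dot Y\transpose = v(Yz)\transpose + (Yz)v\transpose = 0$, so a fortiori $\calA(\dot Y Y\transpose + Y \dot Y\transpose) = 0$. Thus the second-order condition~\eqref{eq:secondorderconditionbis} applies to $\dot Y$.

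Next I would evaluate the quadratic form $\inner{\dot Y}{S\dot Y}$ on this direction. Using cyclicity of the trace,
\[
\inner{vz\transpose}{S v z\transpose} = \trace(z v\transpose S v z\transpose) = (v\transpose S v)\|z\|^2.
\]
Since $\|z\|^2 > 0$ and the second-order condition forces this expression to be nonnegative, one gets $v\transpose S v \geq 0$. As $v \in \Rn$ was arbitrary, $S \succeq 0$. Combining this with the first-order criticality $SY = 0$ (so $\inner{S}{YY\transpose}=0$), Corollary~\ref{cor:YoptimalifSpsd} (equivalently Proposition~\ref{prop:XoptifSpsd}) immediately gives global optimality of both $Y$ for~\eqref{eq:P} and $X = YY\transpose$ for~\eqref{eq:SDP}.

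There is essentially no hard obstacle here; the one thing to be mindful of is checking that the rank-one perturbation $vz\transpose$ is genuinely tangent, which is where the exact form of $\T_Y\calM$ and the identity $Yz=0$ combine perfectly. Everything else is a one-line algebraic manipulation plus an appeal to already-stated results.
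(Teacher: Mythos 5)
Your proof is correct and follows essentially the same route as the paper's: both extract $z\neq 0$ with $Yz=0$, use the rank-one tangent directions $vz\transpose$ (tangency via $\dot Y Y\transpose + Y\dot Y\transpose = 0$), evaluate the second-order condition to get $\|z\|^2\, v\transpose S v \geq 0$ for all $v$, and conclude via Corollary~\ref{cor:YoptimalifSpsd}. No differences worth noting.
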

\begin{proof}
	The proof parallels the one in~\citep{journee2010low}.
	By Corollary~\ref{cor:YoptimalifSpsd}, it is sufficient to show that $S = S(Y)$~\eqref{eq:S} is positive semidefinite.
	Since $\rank(Y) < p$, there exists $z \in \Rp$ such that $z \neq 0$ and $Yz = 0$. Furthermore, for all $x\in\Rn$, the matrix $\dot Y = xz\transpose$ is such that $Y\dot Y\transpose = 0$. In particular, $\dot Y$ is a tangent vector at $Y$~\eqref{eq:tangentspace}. Since $Y$ is second-order critical, inequality~\eqref{eq:secondorderconditionbis} holds, and here simplifies to:
	\begin{align*}
		0 \leq \innerbig{\dot Y}{S \dot Y} = \innerbig{xz\transpose}{Sxz\transpose\,} = \|z\|^2 \cdot x\transpose S x.
	\end{align*}
	This holds for all $x\in\Rn$. 
	Thus, $S$ is positive semidefinite.
\end{proof}
\begin{corollary}
	Let Assumption~\ref{assu:M} hold for some  $p \geq n$. Then, any second-order critical point $Y\in\calM$ of~\eqref{eq:P} is globally optimal, and $X = YY\transpose$ is globally optimal for~\eqref{eq:SDP}.
\end{corollary}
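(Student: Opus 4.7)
The plan is to reduce everything to Proposition~\ref{prop:rankdeficientY}, which already proves the desired conclusion whenever $\rank(Y) < p$. The key observation is that any $Y \in \calM \subset \Rnp$ satisfies $\rank(Y) \leq \min(n, p) = n$, so under the hypothesis $p \geq n$ the only situation not directly covered by the proposition is the case $p = n$ together with $\rank(Y) = n$.

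First, if $p > n$, then automatically $\rank(Y) \leq n < p$, so $Y$ is column-rank deficient and Proposition~\ref{prop:rankdeficientY} delivers global optimality immediately. Second, if $p = n$ and $\rank(Y) < n$, the same reduction applies. The remaining case is $p = n$ with $\rank(Y) = n$, i.e.\ $Y$ square and invertible. In this situation, the first-order criticality condition $SY = 0$ from~\eqref{eq:firstordercondition} can be right-multiplied by $Y^{-1}$ to give $S = 0$, which is trivially positive semidefinite. Corollary~\ref{cor:YoptimalifSpsd} then yields global optimality of $Y$ for~\eqref{eq:P} and of $X = YY\transpose$ for~\eqref{eq:SDP}.

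I do not foresee a real obstacle: the corollary is essentially bookkeeping on top of Proposition~\ref{prop:rankdeficientY}, with Corollary~\ref{cor:YoptimalifSpsd} patching the one boundary scenario where $Y$ happens to have full column rank. That scenario is dispatched in a single line using invertibility of $Y$.
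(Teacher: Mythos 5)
Your proof is correct and follows essentially the same route as the paper: reduce to Proposition~\ref{prop:rankdeficientY} whenever $\rank(Y)<p$, and in the remaining case $p=n$ with $Y$ invertible, deduce $S=0$ from $SY=0$ and conclude via Corollary~\ref{cor:YoptimalifSpsd}. The paper only adds the side observation that $S=0$ forces $C=\calA^*(\mu)$, so the cost is then constant over $\calC$; this is not needed for the conclusion.
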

\begin{proof}
	For $p > n$ (with $p = n+1$ being the most interesting case), points in $\calM$ are necessarily column-rank deficient, so that the corollary follows from Proposition~\ref{prop:rankdeficientY}. For $p = n$, if $Y$ is rank deficient, use the same proposition. Otherwise, $Y$ is invertible and $SY = 0$~\eqref{eq:firstordercondition} implies $S = 0$, which is a fortiori positive semidefinite.
	By~\eqref{eq:S}, this only happens if $C = \calA^*(\mu)$ for some $\mu$, in which case the cost function $\inner{C}{X} = \inner{\calA^*(\mu)}{X} = \inner{\mu}{b}$ is constant over $\calC$. 
\end{proof}

In this paper, we aim to secure optimality of second-order critical points for $p$ less than $n$. As indicated by Proposition~\ref{prop:rankdeficientY}, the sole concern in that respect is the possible existence of \emph{full-rank} second-order critical points. We first give a result which excludes the existence of full-rank first-order critical points (thus, a fortiori of second-order critical points) for \emph{almost all} cost matrices $C$, provided $p$ is sufficiently large. 
The argument is by dimensionality counting. 
\begin{lemma} \label{lem:criticalptsrankdeficient}
	Let $p$ be such that $\frac{p(p+1)}{2} > \rank\calA$ and such that Assumption~\ref{assu:M} holds.
	Then, for almost all $C$, all critical points of~\eqref{eq:P} are column-rank deficient.
\end{lemma}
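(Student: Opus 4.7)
The plan is to show that the set of cost matrices $C$ admitting a full-rank first-order critical point of~\eqref{eq:P} is contained in a Minkowski sum whose dimension, under the hypothesis, is strictly less than $\dim\Snn=\frac{n(n+1)}{2}$; thus by a standard fact it has Lebesgue measure zero in $\Snn$.

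The main reduction goes as follows. Suppose $Y\in\calM$ is a first-order critical point for~\eqref{eq:P} at cost $C$ with $\rank(Y)=p$. By Definition~\ref{def:critical}, $SY=0$ with $S=C-\calA^*(\mu)$ as in~\eqref{eq:S}. Since the columns of $Y$ are linearly independent, the condition $SY=0$ means $\col(Y)\subseteq\ker S$, so $\rank(S)\le n-p$. Writing $C=S+\calA^*(\mu)$, this gives the inclusion
\begin{align*}
\{C\in\Snn : \text{some }Y\in\calM\text{ with }\rank Y=p\text{ is critical}\}\subseteq \im\calA^* + \calR_{n-p},
\end{align*}
where $\calR_{n-p}=\{S\in\Snn : \rank S\le n-p\}$.

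Next I would bound the dimension of the right-hand side. The image of $\calA^*$ is a linear subspace of $\Snn$ of dimension $\rank\calA$. The determinantal variety $\calR_{n-p}$ is a (closed, semi-algebraic) subset of $\Snn$ of dimension $\frac{(n-p)(n+p+1)}{2}=\frac{n(n+1)}{2}-\frac{p(p+1)}{2}$, which can be seen by parameterizing rank-$k$ symmetric matrices as $UDU^\top$ with $U\in\Rnr$, $D\in\Srr$ (here $k=n-p$) modulo the $\mathrm{GL}(k)$-action $(U,D)\mapsto(UP,P^{-1}DP^{-\top})$. Since $\im\calA^*+\calR_{n-p}$ is the image of the smooth addition map $\im\calA^*\times\calR_{n-p}\to\Snn$, its dimension is at most
\begin{align*}
\rank\calA + \frac{n(n+1)}{2}-\frac{p(p+1)}{2}.
\end{align*}

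The hypothesis $\frac{p(p+1)}{2}>\rank\calA$ then gives the strict inequality
\begin{align*}
\dim\big(\im\calA^*+\calR_{n-p}\big)<\frac{n(n+1)}{2}=\dim\Snn,
\end{align*}
so $\im\calA^*+\calR_{n-p}$ has Lebesgue measure zero in $\Snn$. Consequently, for almost all $C$, no $Y\in\calM$ of rank $p$ can satisfy the first-order condition, proving the lemma. The only step requiring real care is the dimension of the determinantal variety $\calR_{n-p}$ together with the fact that a smooth (or even just semi-algebraic) map cannot increase dimension; both are classical, so I do not expect a genuine obstacle. Assumption~\ref{assu:M} is used only implicitly to ensure that the Lagrange multiplier $\mu$ and hence $S$ in~\eqref{eq:S} are well defined in the first place.
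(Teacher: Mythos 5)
Your proof is correct and follows essentially the same route as the paper's: both arguments observe that a full-rank critical point forces $C$ to lie in $\im\calA^*$ plus the set of symmetric matrices with nullity at least $p$, and then conclude by counting dimensions (your closed determinantal variety $\calR_{n-p}$ is exactly the union $\bigcup_{\ell\ge p}\calN_\ell$ that the paper uses, with the same dimension $\frac{n(n+1)}{2}-\frac{p(p+1)}{2}$). The only cosmetic difference is that you work with the rank-$\le n-p$ variety directly rather than stratifying by exact nullity.
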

\begin{proof}
	Let $Y \in \calM$ be a critical point for~\eqref{eq:P}. By the definition of $S(Y) = C - \calA^*(\mu(Y))$~\eqref{eq:S} and the first-order condition $S(Y)Y=0$~\eqref{eq:firstordercondition}, we have 
	\begin{align}
		\rank Y \leq \nulll(C-\calA^*(\mu(Y))) \leq \max_{\nu\in\Rm} \nulll(C-\calA^*(\nu)),
		\label{eq:baseinequality}
	\end{align}
	where $\nulll$ denotes the nullity (dimension of the kernel). This first step in the proof is inspired by~\cite[Thm.\,3]{wen2013orthogonality}.
	If the right-hand side evaluates to $\ell$, then there exists $\nu$ and $M = C-\calA^*(\nu)$ such that $\nulll(M) = \ell$. Writing $C = M + \calA^*(\nu)$, we find that
	\begin{align}
		C & \in \calN_\ell + \im(\calA^*),
	\end{align}
	where $\calN_\ell$ denotes the set of symmetric matrices of size $n$ with nullity $\ell$ and the $+$ is a set-sum. The set $\calN_\ell$ has dimension 
	\begin{align}
		\dim \calN_\ell = \frac{n(n+1)}{2} - \frac{\ell(\ell+1)}{2}.
	\end{align}
	Assume the right-hand side of~\eqref{eq:baseinequality} evaluates to $p$ or more. Then, a fortiori,
	\begin{align}
		C \in \bigcup_{\ell = p,\ldots,n} \calN_\ell + \im(\calA^*).
		\label{eq:CbelongsUnion}
	\end{align}
	The set on the right-hand side contains all ``bad'' matrices $C$, that is, those for which~\eqref{eq:baseinequality} offers no information about the rank of $Y$. The dimension of that set is bounded as follows, using the fact that the dimension of a finite union is at most the maximal dimension, and the dimension of a finite sum of sets is at most the sum of the set dimensions:
	\begin{align*}
	\dim\left( \bigcup_{\ell = p,\ldots,n} \calN_\ell + \im(\calA^*) \right) & \leq \dim\left( \calN_p + \im(\calA^*) \right) \\
	& \leq \frac{n(n+1)}{2} - \frac{p(p+1)}{2} + \rank\calA.
	\end{align*}
	Since $C\in\Snn$ lives in a space of dimension $\frac{n(n+1)}{2}$, almost no $C$ verifies~\eqref{eq:CbelongsUnion} if
	\begin{align*}
	\frac{n(n+1)}{2} - \frac{p(p+1)}{2} + \rank\calA < \frac{n(n+1)}{2}.
	\end{align*}
	Hence, if $\frac{p(p+1)}{2} > \rank\calA$, for almost all $C$, critical points have $\rank(Y) < p$.
\end{proof}
Theorem~\ref{thm:mastersmallp} follows as an easy corollary of Proposition~\ref{prop:rankdeficientY} and Lemma~\ref{lem:criticalptsrankdeficient}.

In order to make a statement valid for \emph{all} $C$, we further explore the implications of second-order criticality on the definiteness of $S$. For large $p$ (though still smaller than $n$), we expect full-rank second-order critical points should indeed be optimal. The intuition is as follows. If $Y\in\calM$ is a second-order critical point of rank $p$, then, by~\eqref{eq:firstordercondition}, $SY = 0$ which implies $S$ has a kernel of dimension at least $p$. Furthermore, by~\eqref{eq:secondorderconditionbis}, $S$ has ``positive curvature'' along directions in $\T_Y\calM$, whose dimension grows with $p$. Overall, the larger $p$, the more conditions force $S$ to have nonnegative eigenvalues. The main concern is to avoid double counting, as the two conditions are redundant along certain directions: this is where the facial structure of $\calC$ comes into play.

The following theorem refines this intuition.
We use $\otimes$ for Kronecker products and $\vecc$ to vectorize a matrix by stacking its columns on top of each other, so that $\vecc(AXB) = (B\transpose \otimes A)\vecc(X)$. 
A real number $a$ is rounded down as $\floor{a}$.
\begin{theorem} \label{thm:eigS}
	Let $p$ be such that Assumption~\ref{assu:M} holds.
	Let $Y\in\calM$ be a second-order critical point for~\eqref{eq:P}. The matrix $X = YY\transpose$ belongs to the relative interior of the face $\calF_X$~\eqref{eq:calF}. If $\rank(Y) = p$, then $S = S(X)$~\eqref{eq:S} has at most
	\begin{align}
		\floorr*{\frac{\dim \calF_X - \Delta}{p}}
		\label{eq:dimFdeltap}
	\end{align}
	negative eigenvalues, where
	\begin{align}
		\Delta = \frac{p(p+1)}{2} - m'.
		\label{eq:Delta}
	\end{align}
	In particular, if $\dim \calF_X < \Delta + p$, then $S$ is positive semidefinite and both $X$ and $Y$ are globally optimal.
\end{theorem}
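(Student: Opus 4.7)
The strategy is to exploit both optimality conditions simultaneously. First, I would use $SY=0$ together with $\rank(Y)=p$ to observe that every eigenvector of $S$ associated with a nonzero eigenvalue lies in the orthogonal complement of $\col(Y)$. Let $k$ denote the number of negative eigenvalues of $S$ and fix an orthonormal matrix $V\in\reals^{n\times k}$ whose columns span the negative eigenspace; then $V\transpose V = I_k$, $V\transpose Y = 0$, and $V\transpose S V = -D$ with $D\succ 0$.

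The crucial move is to probe the second-order condition with a rich family of tangent vectors: $\dot Y = YM + VA$ for $M\in\Spp$ and $A\in\reals^{k\times p}$. The two summands are mutually orthogonal (since $V\transpose Y = 0$), so they span a subspace of $\Rnp$ of dimension $\frac{p(p+1)}{2} + kp$. A short computation from~\eqref{eq:tangentspace} will show that $\dot Y\in\T_Y\calM$ is equivalent to the linear condition $2\calL_X(M) + \Phi(A) = 0$, where $\Phi(A)_i := 2\innersmall{V\transpose A_i Y}{A}$. Because this is precisely $\calA(\dot Y Y\transpose + Y\dot Y\transpose)=0$ for $\dot Y$ of the probe form, its image sits inside the $m'$-dimensional subspace of $\Rm$ given by $\{\calA(\dot Y Y\transpose + Y\dot Y\transpose) : \dot Y \in \Rnp\}$ (this codimension is what Assumption~\ref{assu:M} buys). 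Rank-nullity then yields
\begin{align*}
\dim\{(M,A)\in\Spp\oplus\reals^{k\times p}: \dot Y\in\T_Y\calM\}\;\geq\;\tfrac{p(p+1)}{2} + kp - m' = kp + \Delta.
\end{align*}

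On the other hand, $SY=0$ kills all cross terms in $\innersmall{\dot Y}{S\dot Y}$, leaving $\innersmall{\dot Y}{S\dot Y} = -\trace(A\transpose D A)$, which is strictly negative whenever $A\neq 0$. Second-order criticality therefore forces $A=0$ on every such tangent vector, reducing the admissible $(M,A)$ set to $\{(M,0): M\in\ker\calL_X\}$, of dimension $\dim\ker\calL_X = \dim\calF_X$ (Proposition~\ref{prop:faces}). Combining the two dimension counts gives $kp + \Delta \leq \dim\calF_X$, and since $k$ is a nonnegative integer, $k\leq \floor{(\dim\calF_X - \Delta)/p}$. That $X$ lies in the relative interior of $\calF_X$ is the face-partition property recalled after Definition~\ref{def:face}. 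If $\dim\calF_X < \Delta + p$, the floor is nonpositive, forcing $k=0$ and thus $S\succeq 0$; Corollary~\ref{cor:YoptimalifSpsd} then delivers global optimality of both $X$ and $Y$.

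The main obstacle is the choice of probe family. Using only $\dot Y = VA$ would give the much weaker bound $kp \leq m'$, entirely missing the face-dimension term. Enlarging the probe with the ``face directions'' $YM$, for unrestricted symmetric $M$ rather than just $M\in\ker\calL_X$, is what couples the rank-nullity count to $\dim\calF_X$; this enlargement works precisely because the $YM$ contributions to $\innersmall{\dot Y}{S\dot Y}$ vanish identically via $SY=0$, so adding them increases the dimension of the probe without relaxing the second-order obstruction. A secondary point to handle carefully is verifying that the image of $(M,A)\mapsto 2\calL_X(M)+\Phi(A)$ really does fit inside an $m'$-dimensional subspace of $\Rm$, which reduces to the standard identification of the codimension of $\T_Y\calM$ under Assumption~\ref{assu:M}.
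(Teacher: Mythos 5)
Your proof is correct, and it reaches the bound by a genuinely different route than the paper. The paper vectorizes everything, builds orthonormal bases $U$ for $\T_Y\calM$ and $V$ for $\{YR : R \in \Rpp\}$, shows the compression of $I_p\otimes S$ to their joint span is positive semidefinite, and then invokes the Cauchy interlacing theorem; the count $k' = np + \Delta - \dim\calF_X$ comes from decomposing $R$ into skew, $\ker\calL_X$, and $(\ker\calL_X)^\perp$ parts. You instead restrict attention to the negative eigenspace of $S$ from the start, probe with $\dot Y = YM + VA$, and play rank--nullity against the second-order condition: the $YM$ directions are curvature-free because $SY=0$, so they enlarge the kernel of the tangency map without weakening the obstruction, and the only way to reconcile $\dim\ker \geq kp+\Delta$ with the forced collapse to $\{(M,0): M\in\ker\calL_X\}$ is $kp \leq \dim\calF_X - \Delta$. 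The two arguments perform the same bookkeeping (in both, $\dim\calF_X$ enters as the overlap between tangent directions and gauge directions $YR$, via $\ker\calL_X$), but yours replaces interlacing with an elementary dimension count and makes explicit which tangent directions would exhibit negative curvature; the paper's version, in exchange, localizes which ordered eigenvalue of $S$ must be nonnegative. All the small steps you rely on check out: $V\transpose Y = 0$ follows from $SY=0$ and symmetry of $S$; the map $(M,A)\mapsto YM+VA$ is injective because $\col(Y)\perp\col(V)$ and $Y$ has full column rank; the image of the tangency map indeed lies in the $m'$-dimensional image of $\dot Y \mapsto \calA(\dot Y Y\transpose + Y\dot Y\transpose)$; and $\dim\calF_X - \Delta \geq 0$ by~\eqref{eq:dimF}, so the floor is well posed.
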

\begin{proof}
	Consider the subspace $\vecc(\T_Y\calM)$ of vectorized tangent vectors at $Y$: it has dimension $k \triangleq \dim \calM$.
	Pick $U\in\reals^{np\times k}$ with columns forming an orthonormal basis for that subspace: $U\transpose U = I_k$.
	Then, $U\transpose(I_p \otimes S)U$ has the same spectrum as $\frac{1}{2} \Hess g(Y)$. Indeed, for all $\dot Y \in \T_Y\calM$ there exists $x \in \Rk$ such that $\vecc(\dot Y) = Ux$, and, by~\eqref{eq:secondorderconditionbis},
	\begin{align*}
		\frac{1}{2} \innersmall{\dot Y}{\Hess g(Y)[\dot Y]} & = \innersmall{\dot Y}{S \dot Y} = \innersmall{Ux}{(I_p \otimes S)Ux} = \innersmall{x}{U\transpose(I_p \otimes S)Ux}.
	\end{align*}
	In particular, $U\transpose(I_p \otimes S)U$ is positive semidefinite since $Y$ is second-order critical. 
	
	Let $V\in\reals^{np\times p^2}, V\transpose V = I_{p^2},$ have columns forming an orthonormal basis of the space spanned by the vectors $\vecc(YR)$ for $R\in\Rpp$: such $V$ exists because $\rank(Y) = p$. Indeed, $\vecc(YR) = (I_p \otimes Y)\vecc(R)$ and $I_p \otimes Y \in \reals^{np \times p^2}$ then has full rank $p^2$. Since $Y$ is a critical point, $SY = 0$ by~\eqref{eq:firstordercondition}, which implies $(I_p\otimes S)V = 0$.
	
	Let $k'$ denote the dimension of the space spanned by the columns of both $U$ and $V$, and let $W\in\reals^{np\times k'}, W\transpose W = I_{k'}$, be an orthonormal basis for this space. It follows that $M = W\transpose (I_p\otimes S)W$ is positive semidefinite. Indeed, for any $z$, there exist $x, y$ such that $Wz = Ux + Vy$. Hence, $z\transpose M z = x\transpose U\transpose(I_p \otimes S)U x \geq 0$.
	
	Let $\lambda_0 \leq \cdots \leq \lambda_{n-1}$ denote the eigenvalues of $S$, and let $\tilde \lambda_0 \leq \cdots \leq \tilde \lambda_{np-1}$ denote the eigenvalues of $I_p\otimes S$. The latter are simply the eigenvalues of $S$ repeated $p$ times, thus: $\tilde \lambda_i = \lambda_{\floorr{i/p}}$. Let $\mu_0 \leq \cdots \leq \mu_{k'-1}$ denote the eigenvalues of $M$. The Cauchy interlacing theorem
	states that, for all $i$,
	\begin{align}
	\tilde \lambda_i \leq \mu_i \leq \tilde \lambda_{i + np - k'}.
	\label{eq:cauchyinterlace}
	\end{align}
	In particular, since $M\succeq 0$, we have $0 \leq \mu_0 \leq \lambda_{\floorr{(np - k')/p}}$. It remains to determine $k'$.
	
	From Proposition~\ref{prop:submanifold}, recall that $k = \dim \calM = np - m'$. We now investigate how many new dimensions $V$ adds to $U$. All matrices $R\in\Rpp$ admit a unique decomposition as
	\begin{align*}
		R = R_\mathrm{skew} + R_{\ker \calL} + R_{(\ker \calL)^\bot},
	\end{align*}
	where $R_\mathrm{skew}$ is skew-symmetric, $R_{\ker \calL}$ is in the kernel of $\calL_X$~\eqref{eq:calL} and $R_{(\ker \calL)^\bot}$ is in the orthogonal complement of the latter in $\Spp$.
	Recalling the definition of tangent vectors~\eqref{eq:tangentspace}, it is clear that $\dot Y = YR_\mathrm{skew}$ is tangent. Similarly, $\dot Y = YR_{\ker \calL}$ is tangent because of the definition of $\calL_X$~\eqref{eq:calL}. Thus, vectorized versions of these are already in the span of $U$. On the other hand, by definition, $YR_{(\ker \calL)^\bot}$ is not tangent at $Y$ (if it is nonzero). This raises $k'$ (the rank of $W$) by $\dim \, (\ker \calL_X)^\bot = \frac{p(p+1)}{2} - \dim \ker \calL_X$. Since $\dim \ker \calL_X = \dim \calF_X$, we have:
	\begin{align}
	k' & = np - m' + \frac{p(p+1)}{2} - \dim \calF_X = np + \Delta - \dim \calF_X.
	\label{eq:kprime}
	\end{align}
	Thus, $np-k' = \dim \calF_X - \Delta$. Combine with $\lambda_{\floorr{(np - k')/p}} \geq 0$ to conclude. 
\end{proof}
Theorem~\ref{thm:deterministic} follows easily from Corollary~\ref{cor:YoptimalifSpsd} and Theorem~\ref{thm:eigS}.
\begin{remark}\label{rem:highfacedim}
	What does it take for a second-order critical point $Y\in\calM$ to be suboptimal? For local optima, the quote from Burer and Monteiro~\citep[\S3]{burer2005local} in the introduction readily states that $Y$ must have rank $p$, and the face $\calF_{X}$ (with $X = YY\transpose$) must be positive dimensional and such that the cost function $\inner{C}{X}$ is constant over $\calF_X$.
	Here, under Assumption~\ref{assu:M} for $p$, Theorem~\ref{thm:eigS} states that if $Y$ is second-order critical
	and is suboptimal, then $\calF_X$ must have dimension $\Delta + p$ or higher. Since~\eqref{eq:dimF} suggests generic faces at rank $p$ have dimension $\Delta$, this further shows thats suboptimal second-order critical points, if they exist, can only occur if the cost function is constant over a high-dimensional face of $\calC$.
%
%
\end{remark}
%
%
To use Theorem~\ref{thm:eigS} in a particular application, one needs to obtain upper bounds on the dimensions of faces of $\calC$. We follow this path for a number of examples in Section~\ref{sec:applications}.

\section{Near optimality of near second-order critical points}

Under Assumption~\ref{assu:M}, problem~\eqref{eq:P} is an example of smooth optimization over a smooth manifold. This suggests using \emph{Riemannian optimization} to solve it~\citep{AMS08}, as already proposed by Journ\'ee et al.~\citep{journee2010low} in a similar context.
Importantly, known algorithms---in particular, the \emph{Riemannian trust-region method} (RTR)---converge to second-order critical points regardless of initialization~\citep{genrtr}.
We state here a recent computational result to that effect~\citep{boumal2016globalrates}.
\begin{proposition}\label{prop:complexity}
	Under Assumption~\ref{assu:M}, if $\calC$ is compact, RTR initialized with any $Y_0\in\calM$ produces in $\calO(1/\varepsilon_g^2\varepsilon_H^{} + 1/\varepsilon_H^3)$ iterations a point $Y\in\calM$ such that
	\begin{align*}
		g(Y) & \leq g(Y_0), & \|\grad g(Y)\| & \leq \varepsilon_g, & \textrm{ and } & & \Hess g(Y) \succeq -\varepsilon_H \Id,
	\end{align*}
	where $g$~\eqref{eq:g} is the cost function of~\eqref{eq:P}.
\end{proposition}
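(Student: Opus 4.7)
The plan is to reduce to the global complexity theorem for the Riemannian trust-region method (RTR) established in~\cite{boumal2016globalrates}, which yields precisely the stated iteration bound $\calO(1/\varepsilon_g^2\varepsilon_H + 1/\varepsilon_H^3)$ together with monotone cost decrease, provided a short checklist of regularity hypotheses on $(\calM,g)$ and on the algorithmic ingredients is satisfied. The work of the proof is then to verify that checklist in the present setting, which is largely automatic thanks to compactness of $\calM$ and the closed-form geometry worked out in Section~\ref{sec:geometryriemannian}.

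First I would invoke Proposition~\ref{prop:submanifold} to equip $\calM$ with the smooth Riemannian submanifold structure, and note that the quadratic cost $g(Y) = \inner{CY}{Y}$~\eqref{eq:g} is smooth on all of $\Rnp$, with closed-form Riemannian gradient~\eqref{eq:gradg} and Hessian~\eqref{eq:Hessg}. Compactness of $\calM$ is then a short consequence of compactness of $\calC$: any $Y\in\calM$ yields $YY\transpose\in\calC$, bounding $\|Y\|^2 = \trace(YY\transpose)$, and $\calM$ is closed as the preimage of $b$ under the continuous map $Y\mapsto \calA(YY\transpose)$. The monotone decrease $g(Y)\leq g(Y_0)$ is intrinsic to any trust-region scheme, since trial steps that fail to decrease the cost are rejected.

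Next I would check the algorithmic hypotheses required by~\cite{boumal2016globalrates}: (i) existence of a retraction on $\calM$ whose pullback has a uniformly bounded second-order error; (ii) Lipschitz continuity of $\grad g$ on $\calM$; and (iii) a restricted Lipschitz-type condition on the pullback of $\Hess g$ along the retraction. For (i), any standard retraction available on an embedded submanifold works---for instance metric projection onto $\calM$, or a second-order retraction constructed from the defining constraints. Conditions (ii) and (iii) follow from smoothness of $g$ and of the retraction, together with the fact that the projector $\Proj_Y$ depends smoothly on $Y$ in a neighborhood of $\calM$ (Lemma~\ref{lem:Proj}); compactness of $\calM$ then promotes pointwise smoothness into uniform Lipschitz bounds on the relevant quantities.

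The main point to watch is condition (iii), since it demands a quantitative Lipschitz modulus for the pullback Hessian rather than mere smoothness. In our setting this is resolved by reading an explicit modulus off the closed-form expressions for $\grad g$ and $\Hess g$, exploiting smoothness of $Y\mapsto G^\dagger$ in an open neighborhood of $\calM$ (Lemma~\ref{lem:Proj}) and boundedness of all relevant continuous quantities on the compact set $\calM$. Once the checklist is satisfied, the theorem of~\cite{boumal2016globalrates} applies verbatim and delivers the stated iteration count along with the two approximate criticality inequalities.
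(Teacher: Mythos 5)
Your proposal is correct and follows the same route as the paper: the paper's proof is a one-line appeal to the main results of the cited complexity analysis of RTR, justified by noting that $g$ has locally Lipschitz gradient and Hessian in $\Rnp$ and that $\calM$ is a compact submanifold of $\Rnp$. You simply spell out the checklist (compactness of $\calM$ from compactness of $\calC$, choice of retraction, uniform Lipschitz moduli from compactness) in more detail than the paper does.
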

\begin{proof}
	Apply the main results of~\citep{boumal2016globalrates} using the fact that $g$ has locally Lipschitz continuous gradient and Hessian in $\Rnp$ and $\calM$ is a compact submanifold of $\Rnp$.
\end{proof}

Importantly, only a finite number of iterations of any algorithm can be run in practice, so that only approximate second-order critical points can be computed. Thus, it is of interest to establish whether approximate second-order critical points are also approximately optimal.
As a first step, we give a soft version of Corollary~\ref{cor:YoptimalifSpsd}. We remark that the condition $I_n \in \im\calA^*$ is satisfied in all examples of Section~\ref{sec:applications}.
\begin{lemma} \label{lem:optimgap}
	Let Assumption~\ref{assu:M} hold for some $p$ and assume $\calC$~\eqref{eq:calC} is compact.
	For any $Y$ on the manifold $\calM$, if $\|\grad g(Y)\| \leq \varepsilon_g$ and $S(Y) \succeq -\frac{\varepsilon_H}{2} I_n$, then the optimality gap at $Y$ with respect to~\eqref{eq:SDP} is bounded as
	\begin{align}
		0 \leq 2(g(Y) - f^\star) \leq \varepsilon_H R + \varepsilon_g \sqrt{R},
		\label{eq:lemoptimgap}
	\end{align}
	where $f^\star$ is the optimal value of~\eqref{eq:SDP} and $R = \max_{X\in\calC} \trace(X) < \infty$ measures the size of $\calC$.
		
	If $I_n\in\im(\calA^*)$,
	the right-hand side of~\eqref{eq:lemoptimgap} can be replaced by $\varepsilon_H R$. This holds in particular if all $X\in\calC$ have same trace and $\calC$ has a relative interior point (Slater condition).
\end{lemma}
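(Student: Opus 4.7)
The strategy is to use $S = S(Y)$ as an approximate dual certificate. Since $C = S + \calA^*(\mu)$ and both $YY\transpose$ and any feasible $X' \in \calC$ satisfy $\calA(\cdot) = b$, the $\calA^*(\mu)$ contribution cancels and one obtains the key identity
\begin{align*}
	g(Y) - \innersmall{C}{X'} \;=\; \innersmall{S}{YY\transpose} - \innersmall{S}{X'}.
\end{align*}
Taking $X'$ to be a minimizer of~\eqref{eq:SDP} reduces the task to bounding these two inner products (the lower bound $0 \leq g(Y) - f^\star$ is automatic because $YY\transpose \in \calC$).

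For the general bound I control each term separately. The hypothesis $S \succeq -\frac{\varepsilon_H}{2} I_n$ together with $X' \succeq 0$ immediately gives $\innersmall{S}{X'} \geq -\frac{\varepsilon_H}{2}\trace(X') \geq -\frac{\varepsilon_H R}{2}$, using compactness of $\calC$ to guarantee $R < \infty$. For the other term, the formula $\grad g(Y) = 2SY$ from~\eqref{eq:gradg} yields $\innersmall{S}{YY\transpose} = \innersmall{SY}{Y} = \tfrac{1}{2}\innersmall{\grad g(Y)}{Y}$, and Cauchy--Schwarz combined with $\|Y\| = \sqrt{\trace(YY\transpose)} \leq \sqrt{R}$ gives $\innersmall{S}{YY\transpose} \leq \tfrac{1}{2}\varepsilon_g \sqrt{R}$. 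Summing the two contributions produces $2(g(Y) - f^\star) \leq \varepsilon_H R + \varepsilon_g \sqrt{R}$.

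The sharpening under $I_n \in \im(\calA^*)$ is the most interesting step. Writing $I_n = \calA^*(\alpha)$, one has $Y = I_n Y = \sum_i \alpha_i A_i Y$, which by~\eqref{eq:normalspace} sits in the normal space $\N_Y\calM$; since $\grad g(Y) \in \T_Y\calM$ is orthogonal to $\N_Y\calM$, this forces $\innersmall{\grad g(Y)}{Y} = 0$, so $\innersmall{S}{YY\transpose} = 0$ and the $\varepsilon_g$ term vanishes outright. Finally, for the Slater variant, a relative interior point of $\calC$ makes the affine hull of $\calC$ equal to $\{X \in \Snn : \calA(X) = b\}$; a trace that is constant on $\calC$ therefore extends by linearity to the whole affine hull, which forces $I_n$ to be orthogonal to $\ker \calA$ within $\Snn$, hence $I_n \in (\ker \calA)^\perp = \im(\calA^*)$, reducing to the previous case. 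No serious obstacle is expected; the only subtle point is recognizing that $Y$ itself lies normal to $\calM$ as soon as $I_n \in \im(\calA^*)$, which is what kills the first-order error term.
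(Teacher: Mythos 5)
Your proof is correct and follows essentially the same route as the paper: both use $S$ as an approximate dual certificate, bound $\innersmall{S}{X'}$ via the eigenvalue hypothesis and $\innersmall{S}{YY\transpose} = \tfrac12\innersmall{\grad g(Y)}{Y}$ via Cauchy--Schwarz, kill the gradient term by observing $Y \in \N_Y\calM$ when $I_n \in \im(\calA^*)$, and derive $I_n \in \im(\calA^*)$ from the Slater condition by the same orthogonality-to-$\ker\calA$ argument. The only difference is cosmetic bookkeeping (you cancel $\calA^*(\mu)$ directly rather than routing through $\innersmall{\mu}{b}$).
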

\begin{proof}
	By assumption on $S(Y) = C - \calA^*(\mu(Y))$~\eqref{eq:S} with $\mu(Y) = G^\dagger \calA(CYY\transpose)$,
	\begin{align*}
		\forall X' \in \calC, \quad -\frac{\varepsilon_H}{2} \trace(X') \leq \innersmall{S(Y)}{X'} & = \innersmall{C}{X'} - \innersmall{\calA^*(\mu(Y))}{X'} \\
		& = \innersmall{C}{X'} - \innersmall{\mu(Y)}{b}.
	\end{align*}
	This holds in particular for $X'$ optimal for~\eqref{eq:SDP}. Thus, we may set $\innersmall{C}{X'} = f^\star$; and certainly, $\trace(X') \leq R$. Furthermore,
	\begin{align*}
		\innersmall{\mu(Y)}{b} = \innersmall{\mu(Y)}{\calA(YY\transpose)} = \innersmall{C-S(Y)}{YY\transpose} = g(Y) - \innersmall{S(Y)Y}{Y}.
	\end{align*}
	Combining the displayed equations and using $\grad g(Y) = 2S(Y)Y$~\eqref{eq:firstordercondition}, we find
	\begin{align}
		0 \leq 2(g(Y) - f^\star) \leq \varepsilon_H R + \innersmall{\grad g(Y)}{Y}.
		\label{eq:optgappartial}
	\end{align}
	In general, we do not assume $I_n \in \im(\calA^*)$ and we get the result by Cauchy--Schwarz on~\eqref{eq:optgappartial} and $\|Y\| = \sqrt{\trace(YY\transpose)} \leq \sqrt{R}$:
	\begin{align*}
		0 \leq 2(g(Y)-f^\star) \leq \varepsilon_H R + \varepsilon_g \sqrt{R}.
	\end{align*}
	But if $I_n \in \im(\calA^*)$, then we show that $Y$ is a normal vector at $Y$, so that it is orthogonal to $\grad g(Y)$. Formally: there exists $\nu \in \Rm$ such that $I_n = \calA^*(\nu)$, and
	\begin{align*}
		\innersmall{\grad g(Y)}{Y} & = \innersmall{\grad g(Y) Y\transpose}{I_n} = \innersmall{\calA(\grad g(Y) Y\transpose)}{\nu} = 0,
	\end{align*}
	since $\grad g(Y) \in \T_Y\calM$~\eqref{eq:tangentspace}. This indeed allows us to simplify~\eqref{eq:optgappartial}.
	
	To conclude, we show that if $\calC$ has a relative interior point $X'$ (that is, $\calA(X') = b$ and $X' \succ 0$) and if $\Trace(X)$ is constant for $X$ in $\calC$, then $I_n \in \im(\calA^*)$. Indeed, $\Snn = \im(\calA^*) \oplus \ker\calA$, so there exist $\nu\in\Rm$ and $M \in \ker \calA$ such that $I_n = \calA^*(\nu) + M$. Thus, for all $X$ in $\calC$,
	\begin{align*}
		0 = \trace(X - X') = \inner{\calA^*(\nu)+M}{X-X'} = \inner{M}{X-X'}.
	\end{align*}
	This implies $M$ is orthogonal to all $X-X'$. These span $\ker\calA$ since $X'$ is interior. Indeed, for any $H\in\ker\calA$, since $X'\succ 0$, there exists $t > 0$ such that $X \triangleq X' + t H \succeq 0$ and $\calA(X) = b$, so that $X \in \calC$.
	Hence, $M \in \ker\calA$ is orthogonal to $\ker \calA$. Consequently, $M = 0$ and $I_n = \calA^*(\nu)$.
\end{proof}
The lemma above involves a condition on the spectrum of $S$. Next, we show this condition is satisfied under an assumption on the spectrum of $\Hess g$ and rank deficiency.
\begin{lemma}\label{lem:fromHesstoS}
	Let Assumption~\ref{assu:M} hold for some $p$.
	If $Y\in\calM$ is column-rank deficient and $\Hess g(Y) \succeq -\varepsilon_H \Id$, then $S(Y) \succeq -\frac{\varepsilon_H}{2} I_n$.
\end{lemma}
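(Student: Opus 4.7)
The plan is to mimic the proof of Proposition~\ref{prop:rankdeficientY}, but keep track of the quantitative slack allowed by the approximate second-order condition. The key observation is that rank deficiency of $Y$ produces a family of tangent vectors on which $\Hess g(Y)$ acts in an especially simple way, reducing the matrix inequality for $\Hess g(Y)$ directly to the desired matrix inequality for $S$.

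Concretely, since $Y$ is column-rank deficient, pick $z\in\Rp$ with $z\neq 0$ and $Yz = 0$. For any $x\in\Rn$, set $\dot Y = xz\transpose$. Then $\dot Y Y\transpose + Y\dot Y\transpose = 0$, so by~\eqref{eq:tangentspace} $\dot Y \in \T_Y\calM$. Using~\eqref{eq:Hessg} and the fact that $\Proj_Y$ is self-adjoint and acts as the identity on tangent vectors,
\begin{align*}
\innersmall{\dot Y}{\Hess g(Y)[\dot Y]} = 2\innersmall{\dot Y}{\Proj_Y(S\dot Y)} = 2\innersmall{\dot Y}{S\dot Y}.
\end{align*}
A direct computation gives $\innersmall{\dot Y}{S\dot Y} = \trace(zx\transpose Sxz\transpose) = \|z\|^2\, x\transpose Sx$ and $\|\dot Y\|^2 = \|x\|^2\|z\|^2$.

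The assumption $\Hess g(Y) \succeq -\varepsilon_H \Id$ on the tangent space means $\innersmall{\dot Y}{\Hess g(Y)[\dot Y]} \geq -\varepsilon_H \|\dot Y\|^2$ for every tangent $\dot Y$. Plugging in the computations above,
\begin{align*}
2\|z\|^2\, x\transpose Sx \geq -\varepsilon_H \|x\|^2\|z\|^2.
\end{align*}
Dividing by $2\|z\|^2 > 0$ yields $x\transpose Sx \geq -\tfrac{\varepsilon_H}{2}\|x\|^2$ for every $x\in\Rn$, which is exactly $S \succeq -\tfrac{\varepsilon_H}{2} I_n$.

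There is no real obstacle here: the only points to verify are that $\dot Y = xz\transpose$ is tangent (immediate from $Yz=0$) and that the projection step in~\eqref{eq:Hessg} can be dropped when testing against a tangent vector (immediate from self-adjointness of $\Proj_Y$). The rank deficiency is what lets us range over arbitrary $x\in\Rn$ while keeping $\dot Y$ tangent, which is precisely why the full matrix inequality on $S$ follows from the tangential inequality on $\Hess g$.
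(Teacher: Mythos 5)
Your proof is correct and follows essentially the same route as the paper: both exploit rank deficiency to form tangent vectors $\dot Y = xz\transpose$ with $Yz=0$ and reduce the Hessian bound to $x\transpose S x \geq -\tfrac{\varepsilon_H}{2}\|x\|^2$. The only cosmetic difference is that the paper normalizes $\|z\|=1$ up front whereas you carry $\|z\|^2$ through and divide at the end.
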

\begin{proof}
	By assumption, there exists $z\in\Rp$, $\|z\| = 1$ such that $Yz = 0$. Thus, for any $x\in\Rn$, we can form $\dot Y = xz\transpose$: it is a tangent vector since $Y\dot Y\transpose = 0$~\eqref{eq:tangentspace}, and $\|\dot Y\|^2 = \|x\|^2$. Then, condition~\eqref{eq:secondorderconditionbis} combined with the assumption on $\Hess g(Y)$ tells us
	\begin{align*}
		-\varepsilon_H \|x\|^2 \leq \innersmall{\dot Y}{\Hess g(Y)[\dot Y]} = 2\innersmall{\dot Y}{S\dot Y} = 2\innersmall{xz\transpose z x\transpose}{S} = 2 x\transpose S x.
	\end{align*}
	This holds for all $x\in\Rn$, hence $S \succeq -\frac{\varepsilon_H}{2} I_n$ as required.
\end{proof}
We now combine the two previous lemmas to form a soft optimality statement.
\begin{theorem} \label{thm:approxtolerance}
	Assume $\calC$ is compact and let $R < \infty$ be the maximal trace of any $X$ feasible for~\eqref{eq:SDP}.
	For some $p$, let Assumption~\ref{assu:M} hold for both $p$ and $p+1$.
	For any $Y\in\calM_p$, form $\tilde Y = \left[ Y | 0_{n\times 1} \right]$ in $\calM_{p+1}$.
	The optimality gap at $Y$ is bounded as
	\begin{align}
		0 \leq 2(g(Y)-f^\star) \leq \sqrt{R} \|\grad g(Y)\| - R \lambdamin(\Hess g(\tilde Y)).
		\label{eq:lambdaminbound}
	\end{align}
	If all $X \in \calC$ have the same trace $R$ and there exists a positive definite feasible $X$, then the bound
	\begin{align}
		0 \leq 2(g(Y)-f^\star) \leq - R \lambdamin(\Hess g(\tilde Y))
	\end{align}
	holds.
	If $p>n$, the bounds hold with $\tilde Y = Y$ (and Assumption~\ref{assu:M} only needs to hold for $p$.) 
\end{theorem}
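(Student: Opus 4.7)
The plan is to reduce the statement to a direct application of Lemma \ref{lem:fromHesstoS} followed by Lemma \ref{lem:optimgap}, with the trick being that the lifted iterate $\tilde Y = [Y\,|\,0_{n\times 1}]$ is automatically column-rank deficient while still encoding the same SDP iterate $X = YY\transpose = \tilde Y \tilde Y\transpose$. Since Lemma \ref{lem:fromHesstoS} requires column-rank deficiency, the lift is precisely what lets us drop that hypothesis on $Y$ itself.

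First, I would establish three transfer identities. Because $\tilde Y \tilde Y\transpose = YY\transpose$, the cost agrees, $g(\tilde Y) = g(Y)$; the dual matrix is unchanged, $S(\tilde Y) = S(Y)$ (since the Gram matrix $G_{ij} = \inner{A_iY}{A_jY}$ and the right-hand side $\calA(CYY\transpose)$ used in \eqref{eq:S} depend only on $YY\transpose$); and feasibility $\tilde Y \in \calM_{p+1}$ is immediate from $\calA(\tilde Y \tilde Y\transpose) = b$. Using the closed form $\grad g = 2SY$ from \eqref{eq:gradg}, the Riemannian gradient on $\calM_{p+1}$ at $\tilde Y$ is $2 S(\tilde Y)\tilde Y = [\,\grad g(Y)\ |\ 0_{n\times 1}\,]$, so $\|\grad g(\tilde Y)\| = \|\grad g(Y)\|$.

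Next, set $\varepsilon_g = \|\grad g(Y)\|$ and $\varepsilon_H = \max\{0,-\lambdamin(\Hess g(\tilde Y))\}$, so $\Hess g(\tilde Y) \succeq -\varepsilon_H \Id$ by construction. Applying Lemma \ref{lem:fromHesstoS} at the column-rank-deficient point $\tilde Y$ (legitimate because Assumption \ref{assu:M} holds for $p+1$) gives $S(\tilde Y) \succeq -\tfrac{\varepsilon_H}{2} I_n$, which via $S(\tilde Y) = S(Y)$ also bounds $S(Y)$. Lemma \ref{lem:optimgap} applied at $\tilde Y$ then yields $2(g(\tilde Y) - f^\star) \leq \varepsilon_H R + \varepsilon_g \sqrt{R}$, and rewriting via $g(\tilde Y) = g(Y)$ together with $\varepsilon_H \leq -\lambdamin(\Hess g(\tilde Y))$ (trivially valid when $\lambdamin \geq 0$, since the right-hand side becomes an even looser upper bound) produces \eqref{eq:lambdaminbound}.

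For the strengthened bound, the second half of Lemma \ref{lem:optimgap} drops the $\varepsilon_g\sqrt{R}$ term as soon as $I_n \in \im(\calA^*)$, and that same lemma shows $I_n \in \im(\calA^*)$ whenever $\calC$ has a relative interior point and all $X\in\calC$ share the same trace; so the refinement is immediate. Finally, when $p > n$, any $Y \in \calM_p$ is itself column-rank deficient (more columns than rows), so the lifting step is unnecessary and the same chain of inequalities goes through verbatim with $\tilde Y = Y$ and with Assumption \ref{assu:M} needed only at $p$. The only nontrivial point, which I would flag as the main place to be careful, is the preservation of the Riemannian gradient norm under the lift; once the formula $\grad g(\tilde Y) = [\grad g(Y)\ |\ 0]$ is in hand, the rest is substitution.
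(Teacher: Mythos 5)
Your proof is correct and follows exactly the paper's argument: lift $Y$ to the automatically rank-deficient $\tilde Y$, note that $S$, $g$, and the gradient norm are unchanged (your explicit computation $\grad g(\tilde Y) = [\,\grad g(Y)\,|\,0\,]$ via \eqref{eq:gradg} correctly justifies a step the paper merely asserts), and then chain Lemma~\ref{lem:fromHesstoS} into Lemma~\ref{lem:optimgap}, handling $p>n$ separately. The one blemish is your parenthetical on the case $\lambdamin(\Hess g(\tilde Y)) > 0$: substituting $-\lambdamin$ for $\varepsilon_H = 0$ makes the right-hand side of \eqref{eq:lambdaminbound} \emph{tighter}, not looser, so that edge case is not "trivially valid" as you claim — but the paper's own proof is equally silent there, so this does not separate your route from the intended one.
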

\begin{proof}
	Since $\tilde Y \tilde Y\transpose = YY\transpose$, $S(\tilde Y) = S(Y)$; in particular, we have $g(\tilde Y) = g(Y)$ and $\|\grad g(\tilde Y)\| = \|\grad g(Y)\|$. Since $\tilde Y$ has deficient column rank, apply Lemmas~\ref{lem:optimgap} and~\ref{lem:fromHesstoS}. For $p > n$, there is no need to form $\tilde Y$ as $Y$ itself necessarily has deficient column rank.
\end{proof}
This works well with Proposition~\ref{prop:complexity}. Indeed, equation~\eqref{eq:lambdaminbound} also implies the following:
\begin{align*}
	\lambdamin(\Hess g(\tilde Y)) \leq - \frac{2(g(Y)-f^\star) - \sqrt{R}\|\grad g(Y)\|}{R}.
\end{align*}
That is,
an approximate critical point $Y$ in $\calM_p$ which is far from optimal (for~\eqref{eq:SDP}) maps to a comfortably-escapable approximate saddle point $\tilde Y$ in $\calM_{p+1}$. This can be helpful for the development of optimization algorithms.

For $p=n+1$, the bound in Theorem~\ref{thm:approxtolerance} can be controlled a priori: approximate second-order critical points are approximately optimal, for any $C$.\footnote{With $p=n+1$, problem~\eqref{eq:P} is no longer lower dimensional than~\eqref{eq:SDP}, but retains the advantage of not involving a positive semidefiniteness constraint.}
\begin{corollary}\label{cor:approxsocpapproxoptimalpnplusone}
	Assume $\calC$ is compact. Let Assumption~\ref{assu:M} hold for $p = n+1$.
	If $Y\in\calM_{n+1}$ satisfies both $\|\grad g(Y)\| \leq \varepsilon_g$ and $\Hess g(Y) \succeq -\varepsilon_H \Id$, then $Y$ is approximately optimal in the sense that (with $R = \max_{X\in\calC} \trace(X)$):
	\begin{align*}
		0 \leq 2(g(Y)-f^\star) \leq \varepsilon_g\sqrt{R} + \varepsilon_H R.
	\end{align*}
	Under the same condition as in Theorem~\ref{thm:approxtolerance}, the bound holds with right-hand side $\varepsilon_H R$ instead.
\end{corollary}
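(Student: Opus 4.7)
The plan is to observe that Corollary~\ref{cor:approxsocpapproxoptimalpnplusone} is an immediate specialization of Theorem~\ref{thm:approxtolerance} to the case $p = n+1$. Since $n+1 > n$, the final sentence of that theorem applies: one can take $\tilde Y = Y$ directly (no need to pad an extra zero column), and Assumption~\ref{assu:M} is only required for $p = n+1$, which matches the hypotheses of the corollary.

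The argument I would make explicit proceeds in two steps that simply chain the lemmas already in place. First, any $Y \in \calM_{n+1}$ is an $n \times (n+1)$ matrix, hence automatically column-rank deficient: there exists a unit $z \in \reals^{n+1}$ with $Yz = 0$. Lemma~\ref{lem:fromHesstoS} then converts the Hessian lower bound $\Hess g(Y) \succeq -\varepsilon_H \Id$ into the spectral bound $S(Y) \succeq -\tfrac{\varepsilon_H}{2} I_n$ on the dual certificate. Second, compactness of $\calC$ ensures $R = \max_{X\in\calC} \Trace(X) < \infty$, so Lemma~\ref{lem:optimgap} applies with the gradient bound $\|\grad g(Y)\| \leq \varepsilon_g$ and the just-derived spectral bound on $S(Y)$, producing
\begin{align*}
0 \leq 2(g(Y) - f^\star) \leq \varepsilon_H R + \varepsilon_g \sqrt{R},
\end{align*}
which is the first claimed bound.

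For the tighter bound $\varepsilon_H R$ under the constant-trace plus Slater hypothesis, no new work is needed: the second half of Lemma~\ref{lem:optimgap} already proves that those conditions force $I_n \in \im(\calA^*)$, and when that happens, the lemma drops the $\varepsilon_g \sqrt{R}$ term. Quoting this clause completes the corollary. There is essentially no obstacle here; the corollary is bookkeeping that exploits the fact that $p = n+1$ already forces column-rank deficiency, letting Theorem~\ref{thm:approxtolerance} be applied without the auxiliary matrix $\tilde Y$ and yielding bounds whose right-hand side depends only on the a priori tolerances $\varepsilon_g, \varepsilon_H$ and the intrinsic size $R$ of $\calC$, uniformly in the cost matrix $C$.
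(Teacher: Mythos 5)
Your proposal is correct and matches the paper's (implicit) argument exactly: the corollary is the $p=n+1$ case of Theorem~\ref{thm:approxtolerance}, where column-rank deficiency of any $Y\in\calM_{n+1}$ is automatic, so Lemma~\ref{lem:fromHesstoS} followed by Lemma~\ref{lem:optimgap} gives both bounds without forming $\tilde Y$. No gaps.
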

Theorem~\ref{thm:stableinformal} is an informal statement of this corollary.

\section{Applications} \label{sec:applications}


In all applications below, Assumption~\ref{assu:M}a holds for all $p$ such that the search space is non-empty. For each one, we deduce the consequences of Theorems~\ref{thm:mastersmallp} and~\ref{thm:deterministic}. For the latter, the key part is to investigate the facial structure of the SDP. As everywhere else in the paper, $\|x\|$ denotes the 2-norm of vector $x$ and $\|X\|$ denotes the Frobenius norm of matrix $X$.

\subsection{Generalized eigenvalue SDP}

The generalized symmetric eigenvalue problem admits a well-known extremal formulation:
\begin{align}
	\min_{x \in \Rn} x\transpose C x \quad \st \quad x\transpose B x = 1,
	\tag{EIG}
	\label{eq:geneig}
\end{align}
where $C, B$ are symmetric of size $n \geq 2$. The usual relaxation by lifting introduces $X = xx\transpose$ and discards the constraint $\rank(X) = 1$ to obtain this SDP (which is also the Lagrangian dual of the dual of~\eqref{eq:geneig}):
\begin{align}
	\min_{X\in\Snn} \inner{C}{X}  \quad \st \quad \inner{B}{X} = 1, \ X \succeq 0.
	\tag{EIG-SDP}
	\label{eq:SDPgeneig}
\end{align}
Let $\calC$ denote the search space of~\eqref{eq:SDPgeneig}. It is non-empty and compact if and only if $B \succ 0$, which we now assume.
A direct application of~\eqref{eq:patakirank} guarantees all extreme points of $\calC$ have rank 1, so that it always admits a solution of rank 1: the SDP relaxation is always tight, which is well known. 
Under our assumption, $B$ admits a Cholesky factorization as $B = R\transpose R$ with $R \in \Rnn$ invertible. The corresponding Burer--Monteiro formulation at rank $p$ reads:
\begin{align}
	\min_{Y \in \Rnp} \inner{CY}{Y} \quad \st \quad \|RY\|^2 = 1.
	\tag{EIG-BM}
	\label{eq:BMgeneig}
\end{align}
Let $\calM$ denote its search space. Assumption~\ref{assu:M}a holds for any $p \geq 1$ with $m' = 1$. Indeed, for all $Y \in \calM$, $\{ BY \}$ spans a subspace of dimension 1, since $BY = R\transpose R Y$, $RY \neq 0$ and $R\transpose$ is invertible. Thus, Theorem~\ref{thm:mastersmallp} readily states that for $p \geq 2$, for almost all $C$, all second-order critical points of~\eqref{eq:BMgeneig} are optimal.

We can do better. The facial structure of $\calC$ is easily described. Recalling~\eqref{eq:dimF}, for all $X = YY\transpose \in \calC$ we have $\dim \calF_X = \frac{p(p+1)}{2} - 1$, since $Y\transpose B Y \neq 0$. Hence, by Theorem~\ref{thm:deterministic}, for any value of $p \geq 1$, all second-order critical points of~\eqref{eq:BMgeneig} are optimal (for any $C$). In particular, for $p = 1$~\eqref{eq:geneig} and~\eqref{eq:BMgeneig} coincide and we get:
\begin{corollary}
	All second-order critical points of~\eqref{eq:geneig} are optimal.
\end{corollary}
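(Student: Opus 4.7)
The plan is to recognize that this corollary is an immediate specialization of the chain of results already established just above, so the proof is essentially bookkeeping. First I would observe that (EIG) literally coincides with (EIG-BM) at $p=1$: a matrix $Y\in\Rnp$ with $p=1$ is just a column vector $x\in\Rn$, and since $B=R\transpose R$, the constraint $\|RY\|^2=1$ becomes $x\transpose B x = 1$. Hence it is enough to apply the general theory of the paper to (EIG-BM) with $p=1$ and invoke Theorem~\ref{thm:deterministic}.

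Next I would verify that Assumption~\ref{assu:M}a holds at $p=1$ with $m'=1$. A feasible $Y$ satisfies $\|RY\|=1$ and is in particular nonzero; since $R$ is invertible (hence so is $B=R\transpose R$), $BY = R\transpose R Y \neq 0$, so $\{BY\}$ is linearly independent in $\Rn$ for every $Y\in\calM$. Thus $m'=1$ and all tools from Section~\ref{sec:geometryriemannian} apply.

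Then I would compute the dimension of the face $\calF_X$ attached to any feasible $X=YY\transpose$ of rank $p=1$. By Proposition~\ref{prop:faces}, this amounts to computing the rank of the linear map $\calL_X \colon \mathbb{S}^{1\times 1}\to\reals$, which here takes a scalar $A$ to $\calA(YAY\transpose) = A\,\inner{B}{YY\transpose} = A\,\|RY\|^2 = A$. So $\rank \calL_X = 1$, and by~\eqref{eq:dimF},
\begin{align*}
  \dim \calF_X \;=\; \tfrac{p(p+1)}{2} - \rank\calL_X \;=\; 1 - 1 \;=\; 0.
\end{align*}
With $\Delta = \tfrac{p(p+1)}{2} - m' = 0$ and $p=1$, the threshold in Theorem~\ref{thm:deterministic} is $\Delta + p = 1$, so the condition $\dim \calF_X < \Delta + p$ is satisfied (strictly). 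Since feasibility forces $Y\neq 0$, we have $\rank(Y)=1=p$, and the second branch of Theorem~\ref{thm:deterministic} applies.

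Conclusion: every second-order critical point $Y$ of (EIG-BM) at $p=1$, i.e.\ every second-order critical point of (EIG), is globally optimal, and $X=YY\transpose$ is globally optimal for (EIG-SDP). There is really no serious obstacle here; the only thing to check is the face-dimension computation, which is immediate because the single constraint $\inner{B}{X}=1$ is nondegenerate at every feasible $X$ thanks to $B\succ 0$.
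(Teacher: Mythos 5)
Your proof is correct and follows essentially the same route as the paper: verify Assumption~\ref{assu:M}a with $m'=1$, compute $\dim\calF_X$ via~\eqref{eq:dimF} using the non-vanishing of $Y\transpose B Y$, and invoke Theorem~\ref{thm:deterministic}. The only difference is that you specialize to $p=1$ from the outset, whereas the paper establishes $\dim\calF_X = \frac{p(p+1)}{2}-1$ for all $p\ge 1$ (hence optimality of second-order critical points of~\eqref{eq:BMgeneig} for every $p$) and then notes that \eqref{eq:geneig} and \eqref{eq:BMgeneig} coincide at $p=1$; the two computations agree there.
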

This is a well-known  fact, though usually proven by direct inspection of necessary optimality conditions. 


\subsection{Trust-region subproblem SDP} \label{sec:trs}
The trust-region subproblem consists in minimizing a quadratic on a sphere, with $n \geq 2$:
\begin{align}
	\min_{x \in \Rn} x\transpose A x + 2b\transpose x + c \quad \st \quad \|x\|^2 = 1.
	\tag{TRS}
	\label{eq:TRS}
\end{align}
It is not difficult to produce $(A, b, c)$ such that~\eqref{eq:TRS} admits suboptimal second-order critical points.
The usual lifting here introduces
\begin{align*}
	X & = \begin{pmatrix}
	x \\ 1
	\end{pmatrix} \begin{pmatrix}
	x\transpose & 1
	\end{pmatrix} = \begin{pmatrix}
	xx\transpose & x \\ x\transpose & 1
	\end{pmatrix}, & \textrm{ and } & & C & = \begin{pmatrix}
	A & b \\ b\transpose & c
	\end{pmatrix}.
\end{align*}
The quadratic cost and constraint are linear in $X$, yielding this SDP relaxation:
\begin{align}
	\min_{X\in\Snn} \inner{C}{X}  \quad \st \quad  \trace(X_{1:n, 1:n}) = 1, \ X_{n+1, n+1} = 1, \ X \succeq 0.
	\tag{TRS-SDP}
	\label{eq:SDPTRS}
\end{align}
Let $\calC$ denote the search space of~\eqref{eq:SDPTRS}. It is non-empty and compact.
Here too, a direct application of~\eqref{eq:patakirank} guarantees the SDP relaxation is always tight (it always admits a solution of rank 1), which is a well-known fact related to the S-lemma~\citep{polik2007slemma}.
The Burer--Monteiro relaxation at rank $p$ reads:
\begin{align}
	\min_{Y_1 \in \Rnp, y_2 \in \Rp} \inner{CY}{Y}  \quad \st \quad  \|Y_1\|^2 = 1, \ \|y_2\|^2 = 1, \quad \textrm{ with } Y = \begin{pmatrix}
	Y_1 \\ y_2\transpose
	\end{pmatrix}.
	\tag{TRS-BM}
	\label{eq:BMTRS}
\end{align}
Let $\calM$ denote its search space. After verifying Assumption~\ref{assu:M} holds (see below), application of Theorem~\ref{thm:mastersmallp} guarantees that for $p \geq 2$ and for \emph{almost} all $(A, b, c)$, second-order critical points of~\eqref{eq:BMTRS} are optimal. We can further strengthen this result by looking at the faces of $\calC$, as we do now.
\begin{lemma}
	Assumption~\ref{assu:M}a holds for any $p \geq 1$ with $m' = 2$. Furthermore, for $X \in \calC$ of rank $p$, 
	\begin{align*}
		\dim \calF_X & = \begin{cases}
		0 & \textrm{ if } p = 1, \\
		\frac{p(p+1)}{2} - 2 & \textrm{ if } p \geq 2.
		\end{cases}
	\end{align*}
\end{lemma}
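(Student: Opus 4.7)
The plan is to identify the two constraint matrices of \eqref{eq:SDPTRS}, check Assumption~\ref{assu:M}a directly, and then compute $\dim \ker \calL_X$ via Proposition~\ref{prop:faces}. Throughout, write
\[
A_1 = \begin{pmatrix} I_n & 0 \\ 0 & 0 \end{pmatrix}, \qquad A_2 = \begin{pmatrix} 0 & 0 \\ 0 & 1 \end{pmatrix},
\]
so that $\calA(X) = (\trace(X_{1:n,1:n}), X_{n+1,n+1})\transpose$, and partition $Y = \begin{pmatrix} Y_1 \\ y_2\transpose \end{pmatrix}$ as in \eqref{eq:BMTRS}.

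First I would verify Assumption~\ref{assu:M}a. A direct computation gives $A_1 Y = \begin{pmatrix} Y_1 \\ 0 \end{pmatrix}$ and $A_2 Y = \begin{pmatrix} 0 \\ y_2\transpose \end{pmatrix}$. These live in complementary block-rows of $\reals^{(n+1)\times p}$, so they are linearly independent as soon as each is nonzero. But the feasibility constraints $\|Y_1\|^2 = 1$ and $\|y_2\|^2 = 1$ immediately rule out $Y_1 = 0$ and $y_2 = 0$. Hence the independence holds on all of $\calM$, and $m' = 2$.

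Next I would compute $\calL_X$ from \eqref{eq:calL}: since $Y\transpose A_1 Y = Y_1\transpose Y_1$ and $Y\transpose A_2 Y = y_2 y_2\transpose$, we have $\calL_X(A) = (\inner{Y_1\transpose Y_1}{A}, \inner{y_2 y_2\transpose}{A})\transpose$, and $\dim \calF_X = \frac{p(p+1)}{2} - \rank \calL_X$ with $\rank \calL_X \in \{1,2\}$. For $p = 1$, $Y_1\transpose Y_1$ and $y_2 y_2\transpose$ are both the scalar $1$, so $\rank \calL_X = 1$ and $\dim \calF_X = 0$. For $p \geq 2$ it remains to show $\rank \calL_X = 2$, i.e., $Y_1\transpose Y_1$ and $y_2 y_2\transpose$ are linearly independent in $\Spp$.

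The main (small) obstacle is this independence claim, and the key case is $p = 2$. Suppose $\alpha Y_1\transpose Y_1 + \beta y_2 y_2\transpose = 0$. If $\alpha = 0$ then $\beta y_2 y_2\transpose = 0$, forcing $\beta = 0$ since $\|y_2\| = 1$; similarly $\beta = 0$ forces $\alpha = 0$. So assume both nonzero, giving $Y_1\transpose Y_1 = \lambda\, y_2 y_2\transpose$ for some $\lambda \in \reals$. The right-hand side has rank at most $1$, hence so does $Y_1\transpose Y_1$, hence $Y_1$ has rank at most $1$; write $Y_1 = u\alpha\transpose$ for some $u \in \Rn$ and $\alpha \in \Rp$. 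Then $Y_1\transpose Y_1 = \|u\|^2 \alpha\alpha\transpose = \lambda y_2 y_2\transpose$, so $y_2$ is a scalar multiple of $\alpha$. Consequently every column of $Y$ is a scalar multiple of the single vector $(u\transpose, c)\transpose \in \reals^{n+1}$ (where $c$ is the proportionality constant), so $\rank(Y) \leq 1$, contradicting $\rank(Y) = p \geq 2$. Thus $\rank \calL_X = 2$ and $\dim \calF_X = \frac{p(p+1)}{2} - 2$, completing the proof.
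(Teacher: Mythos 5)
Your proof is correct and follows essentially the same route as the paper's: verify independence of $A_1Y$ and $A_2Y$ from the block structure and the feasibility constraints, then show $Y_1\transpose Y_1^{}$ and $y_2^{}y_2\transpose$ are linearly independent for $p \geq 2$ by deriving a rank-one contradiction for $Y$. The only difference is cosmetic (you spell out the degenerate subcases of the linear combination and reuse the symbol $\alpha$ for two different objects), so nothing needs to change.
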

\begin{proof}
The constraints of~\eqref{eq:SDP} are defined by
\begin{align*}
	A_1 & = \begin{pmatrix}
	I_n & 0_{n\times 1} \\ 0_{1 \times n} & 0
	\end{pmatrix}, & b_1 & = 1, & A_2 & = \begin{pmatrix}
	0_{n\times n} & 0_{n \times 1} \\ 0_{1 \times n} & 1
	\end{pmatrix}, & b_2 & = 1.
\end{align*}
For $Y \in \calM$, we have
\begin{align*}
	A_1 Y & = \begin{pmatrix}
	Y_1 \\ 0_{1 \times p}
	\end{pmatrix}, & A_2 Y & = \begin{pmatrix}
	0_{n\times p} \\ y_2\transpose
	\end{pmatrix}.
\end{align*}
These are nonzero and always linearly independent, so that $\dim \spann \{A_1Y, A_2Y\} = 2$ for all $Y\in\calM$, which confirms Assumption~\ref{assu:M}a holds with $m' = 2$.

The facial structure of $\calC$ is simple as well. Let $X \in \calC$ have rank $p$ and consider $Y\in\calM$ such that $X = YY\transpose$. To use~\eqref{eq:dimF}, note that:
\begin{align*}
	Y\transpose A_1 Y & = Y_1\transpose Y_1^{}, & Y\transpose A_2 Y & = y_2^{} y_2\transpose.
\end{align*}
These are nonzero. For $p = 1$, they are scalars: they span a subspace of dimension 1. Then, $\dim \calF_X = 1 - 1 = 0$. For $p > 1$, we argue they are linearly independent. Indeed, if they are not, there exists $\alpha \neq 0$ such that $Y_1\transpose Y_1^{} = \alpha \cdot y_2^{} y_2\transpose$. If so, $Y_1$ must have rank 1 with row space spanned by $y_2$, so that $Y_1 = z y_2\transpose$ for some $z \in \Rn$, and $\|z\| = 1$. As a result, $Y$ itself has rank 1, which is a contradiction. Thus, $\dim \calF_X = \frac{p(p+1)}{2} - 2$, as announced.
\end{proof}
Combining the latter with Theorem~\ref{thm:deterministic} yields the following new result, which holds for \emph{all} $(A, b, c)$. Notice that for $p = 1$, the theorem correctly allows second-order critical points to be suboptimal in general.
\begin{corollary}
	For $p \geq 2$, all second-order critical points of~\eqref{eq:BMTRS} are globally optimal.
\end{corollary}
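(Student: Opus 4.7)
The plan is to derive the corollary as a direct application of Theorem~\ref{thm:deterministic}, with all the geometric data supplied by the preceding lemma. Let $Y \in \calM$ be a second-order critical point of~\eqref{eq:BMTRS} with $p \geq 2$; I want to show $Y$ is globally optimal. The lemma has already established that Assumption~\ref{assu:M}a holds for this $p$ with $m' = 2$, so Theorem~\ref{thm:deterministic} is applicable.

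First I handle the rank-deficient case: if $\rank(Y) < p$, then Theorem~\ref{thm:deterministic} (or equivalently Proposition~\ref{prop:rankdeficientY}) immediately gives global optimality of $Y$ and of $X = YY\transpose$. This case requires no further input.

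Second, suppose $\rank(Y) = p$, so that $X = YY\transpose$ also has rank $p \geq 2$. Theorem~\ref{thm:deterministic} then reduces the question to verifying the dimensional inequality
\begin{align*}
    \dim \calF_{YY\transpose} \ < \ \frac{p(p+1)}{2} - m' + p.
\end{align*}
The lemma supplies both sides: $m' = 2$ and, for a rank-$p$ point with $p \geq 2$, $\dim \calF_X = \frac{p(p+1)}{2} - 2$. Plugging in, the inequality becomes $\frac{p(p+1)}{2} - 2 < \frac{p(p+1)}{2} - 2 + p$, i.e., $0 < p$, which is trivial. Thus $Y$ is globally optimal in this case as well.

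There is no real obstacle: the heavy lifting---the explicit description of the constraints, the verification of Assumption~\ref{assu:M}a, the identification of $m' = 2$, and the computation of $\dim \calF_X$ in terms of the rank---was already carried out in the preceding lemma. The corollary is therefore a bookkeeping step. It is worth observing that the same calculation at $p = 1$ would require $0 < 0$, which fails; this correctly reflects the well-known fact that~\eqref{eq:TRS} can admit suboptimal second-order critical points, and explains why the hypothesis $p \geq 2$ is needed.
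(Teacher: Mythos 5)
Your proof is correct and follows exactly the route the paper intends: the paper derives this corollary by "combining the latter [lemma] with Theorem~\ref{thm:deterministic}," and you have simply spelled out that combination, splitting into the rank-deficient case and the full-rank case where $\dim\calF_X = \frac{p(p+1)}{2}-2 < \frac{p(p+1)}{2}-2+p$ holds trivially. Your closing remark about $p=1$ also matches the paper's own observation that the theorem correctly permits suboptimal second-order critical points there.
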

A second-order critical point $Y$ of~\eqref{eq:BMTRS} with $p = 2$ is thus always optimal. If $Y$ has rank 1, it is straightforward to extract a solution of~\eqref{eq:TRS} from it.
If $Y$ has rank 2,\footnote{This can happen, notably if $(A, b, c)$ forms a so-called \emph{hard case} TRS (details omitted.) This observation shows that it is indeed necessary to exclude some non-trivial matrices $C$ in Lemma~\ref{lem:criticalptsrankdeficient}.}
it maps to a face of dimension~1. The endpoints of that face have rank 1 and are also optimal.
The following lemma shows these can be computed easily from $Y$ by solving two scalar equations. 
\begin{lemma}
	Let $Y \in \calM$ be a second-order critical point of~\eqref{eq:BMTRS} with $p = 2$, and let $z \in \reals^2$ satisfy $\|Y_1z\|^2 = 1$ and $y_2\transpose z = 1$. Then, $Y_1 z$ is a global optimum of~\eqref{eq:TRS}.
\end{lemma}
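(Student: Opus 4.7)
The plan is to lift $x = Y_1 z$ back to $\Snn$ via $v = Yz$ and show that $vv\transpose$ is a rank-one global optimum of~\eqref{eq:SDPTRS}. Concretely, the two scalar conditions imposed on $z$ give
\begin{align*}
v = Yz = \begin{pmatrix} Y_1 z \\ y_2\transpose z \end{pmatrix} = \begin{pmatrix} x \\ 1 \end{pmatrix},
\end{align*}
so $vv\transpose$ has the block structure of a lifted vector. Feasibility of $vv\transpose$ for~\eqref{eq:SDPTRS} then reduces to $\|x\|^2 = 1$ and $(vv\transpose)_{n+1,n+1} = 1$, both given. It therefore suffices to prove $\inner{C}{vv\transpose} = f^\star$, for then $x = Y_1 z$ is feasible for~\eqref{eq:TRS} with cost equal to $f^\star$, hence globally optimal.

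The key step is to evaluate $\inner{C}{vv\transpose}$ using the first-order condition at $Y$. Since $Y$ is critical, $SY = 0$ gives $CY = \calA^*(\mu)Y = \mu_1 A_1 Y + \mu_2 A_2 Y$. Recalling from the proof of the previous lemma that $A_1 Y = \begin{pmatrix} Y_1 \\ 0_{1\times 2} \end{pmatrix}$ and $A_2 Y = \begin{pmatrix} 0_{n\times 2} \\ y_2\transpose \end{pmatrix}$, one gets
\begin{align*}
Y\transpose C Y = \mu_1\, Y_1\transpose Y_1^{} + \mu_2\, y_2^{} y_2\transpose.
\end{align*}
A direct computation then yields
\begin{align*}
\inner{C}{vv\transpose} &= z\transpose Y\transpose C Y z = \mu_1\, \|Y_1 z\|^2 + \mu_2\, (y_2\transpose z)^2 = \mu_1 + \mu_2, \\
\inner{C}{YY\transpose} &= \Trace(Y\transpose C Y) = \mu_1\, \|Y_1\|^2 + \mu_2\, \|y_2\|^2 = \mu_1 + \mu_2,
\end{align*}
where the four feasibility identities $\|Y_1\|^2 = \|y_2\|^2 = \|Y_1 z\|^2 = y_2\transpose z = 1$ are used in the last equalities. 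Hence $\inner{C}{vv\transpose} = \inner{C}{YY\transpose}$.

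By the previous corollary, second-order criticality of $Y$ at $p = 2$ implies $X = YY\transpose$ is globally optimal for~\eqref{eq:SDPTRS}, so $\inner{C}{YY\transpose} = f^\star$, and therefore $\inner{C}{vv\transpose} = f^\star$. Combined with the feasibility check from the first paragraph, this proves that $x = Y_1 z$ attains the optimal value of~\eqref{eq:TRS}. No step is particularly delicate; the main observation is that the first-order condition collapses the quadratic form $z \mapsto z\transpose Y\transpose C Y z$ into a linear combination of squared norms that the constraints on $z$ pin to exactly the same values as the trace of $Y\transpose C Y$. An alternative route is to observe that $vv\transpose = Y(zz\transpose)Y\transpose$ lies in the face $\calF_X$ via Proposition~\ref{prop:faces} (with $A = zz\transpose - I_2$, which is in $\ker \calL_X$ because $\calA(vv\transpose) = \calA(YY\transpose)$) and invoke linearity of the cost on that face, but the direct algebraic route above is shortest.
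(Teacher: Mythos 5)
Your proof is correct, but it proceeds differently from the paper's. The paper splits on $\rank(Y)$: for rank~1 it reads off $x$ directly, and for rank~2 it invokes the explicit face description of Proposition~\ref{prop:faces} to identify $Y(zz\transpose)Y\transpose$ as an extreme point of the one-dimensional face $\calF_{YY\transpose}$, then uses constancy of the linear cost over that face. You instead use the first-order condition $SY=0$ to write $Y\transpose C Y = \mu_1 Y_1\transpose Y_1^{} + \mu_2\, y_2^{}y_2\transpose$, after which the constraints on $z$ and the feasibility of $Y$ pin both $z\transpose Y\transpose C Y z$ and $\Trace(Y\transpose C Y)$ to $\mu_1+\mu_2$; combined with the preceding corollary ($g(Y)=f^\star$) and the easy feasibility check on $vv\transpose$, this closes the argument. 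Your route is more elementary and uniform — no case split on the rank of $Y$ and no appeal to the facial structure — and you correctly note the face-based argument as the alternative (it is essentially what the paper does). What the paper's geometric route buys is the surrounding discussion the lemma is meant to support: it explains \emph{why} a suitable $z$ exists and how to compute it (the ellipse meeting two parallel lines in four points), whereas your proof, like the lemma statement itself, takes $z$ as given. All the individual steps you use ($CY=\calA^*(\mu)Y$, the formulas for $A_1Y$ and $A_2Y$, and the identification of the TRS cost of $Y_1z$ with $\innersmall{C}{vv\transpose}$) check out.
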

\begin{proof}
	If $\rank(Y) = 1$, then $Y_1 = xy_2^T$ for some $x \in \Rn$, and $\|Y_1\| = 1, \|y_2\| = 1$ ensure $\|x\| = 1$. Solutions to $y_2^T z = 0$ are of the form $z = y_2 + u$, where $y_2^T u = 0$. For any such $z$, $Y_1 z = x$, which is indeed optimal for~\eqref{eq:TRS} since $Y$ is globally optimal for~\eqref{eq:BMTRS} and $x$ attains the same cost for the restricted problem~\eqref{eq:TRS}.

	Now assume $\rank(Y) = 2$. By~\eqref{eq:calF}, the one-dimensional face $\calF_{YY\transpose}$ contains all matrices of the form $Y(I_2 - M)Y\transpose$ such that $I_2 - M \succeq 0$ and $\inner{I_2-M}{Y_1\transpose Y_1^{}} = 0$, $\inner{I_2-M}{y_2^{} y_2\transpose} = 0$. This face has two extreme points of rank 1, for which $I_2 - M$ is a positive semidefinite matrix of rank 1, so that $I_2 - M = zz\transpose$ for some $z \in \reals^2$. Given that $Y$ is feasible, the conditions on $z$ are $\|Y_1z\|^2 = 1$ and $y_2\transpose z = \pm 1$. These equations define an ellipse in $\reals^2$ and two parallel lines, totaling four intersections $\pm z, \pm z'$ which can be computed explicitly. Fixing $y_2\transpose z = +1$ allows to identify the two extreme points of the face. Since the cost function is constant along that face, either extreme point yields a global optimum in the same way as above.
\end{proof}

\subsection{Optimization over several spheres}

The trust-region subproblem generalizes to optimization of a quadratic function over $k$ spheres, possibly in different dimensions $n_1, \ldots, n_k \geq 2$:
\begin{align}
	& \min_{x_i \in \reals^{n_i}, i = 1 \ldots k} x\transpose C x \quad \st \quad \|x_1\| = \cdots = \|x_k\| = 1,
	\tag{Spheres} \label{eq:manyspheres} \\
	& \textrm{with } x\transpose = \begin{pmatrix}
	x_1\transpose & \cdots & x_k\transpose & 1
	\end{pmatrix}. \nonumber	
\end{align}
The variable $x$ is in $\reals^{n+1}$, with $n = n_1 + \cdots + n_k$. Since the last entry of $x$ is 1, this indeed covers all possible quadratic functions of $x_1, \ldots, x_k$.
The SDP relaxation by lifting reads:
\begin{align}
	 \min_{X \in \reals^{(n+1) \times (n+1)}} \inner{C}{X} \quad \st \quad & \trace(X_{11}) = \cdots = \trace(X_{kk}) = 1, \nonumber\\ & X_{n+1, n+1} = 1, X \succeq 0,
	\tag{Spheres-SDP}
	\label{eq:manyspheresSDP}
\end{align}
where $X_{ij}$ denotes the block of size $n_i \times n_j$ of matrix $X$, in the obvious way. This SDP has a non-empty compact search space and $k+1$ independent constraints, so that by~\eqref{eq:patakirank} it always admits a solution of rank at most $p^* = \frac{\sqrt{8k+9}-1}{2}$. The Burer--Monteiro relaxation at rank $p$ reads:
\begin{align}
	& \min_{Y \in \reals^{(n+1)\times p}} \inner{CY}{Y} \quad \st \quad \|Y_1\| = \cdots = \|Y_k\| = 1, \|y\| = 1,
	\tag{Spheres-BM}
	\label{eq:manyspheresBM} \\
	& \textrm{with } Y\transpose = \begin{pmatrix}
	Y_1\transpose & \cdots & Y_k\transpose & y
	\end{pmatrix}, \nonumber	
\end{align}
where $Y_i \in \reals^{n_i \times p}$ and $y \in \reals^p$. It is easily checked that Assumption~\ref{assu:M}a holds for all $p \geq 1$. Thus, Theorem~\ref{thm:mastersmallp} gives this result:
\begin{corollary}
	For $p > \frac{\sqrt{8k+9}-1}{2}$ and for almost all $C$, all second-order critical points of~\eqref{eq:manyspheresBM} are optimal and map to optima of~\eqref{eq:manyspheresSDP}.
\end{corollary}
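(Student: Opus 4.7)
The plan is to invoke Theorem~\ref{thm:mastersmallp} directly. Two items need verification: (i) Assumption~\ref{assu:M}a holds for all $p \geq 1$, and we need to identify the resulting value of $m'$; (ii) $\rank \calA = k+1$, so that the hypothesis $\frac{p(p+1)}{2} > \rank \calA$ is equivalent to the stated threshold $p > \frac{\sqrt{8k+9}-1}{2}$. Once these are in hand the corollary is immediate.

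For item (ii) I would first write the constraint matrices explicitly: $A_i$ (for $i = 1, \ldots, k$) is the symmetric $(n+1)\times(n+1)$ matrix with an $I_{n_i}$ block in the diagonal slot corresponding to $x_i$ and zeros elsewhere, while $A_{k+1}$ has a single $1$ in its bottom-right entry. These matrices have pairwise disjoint supports, so they are linearly independent in $\Snn$. Hence $\calA^* \colon \Rm \to \Snn$ is injective, and therefore $\rank \calA = m = k+1$.

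For item (i), pick any $Y \in \calM_p$ and decompose $Y\transpose = (Y_1\transpose \cdots Y_k\transpose\ y)$. For $i \leq k$, the matrix $A_i Y$ is zero outside the $n_i$ rows associated with block $i$, where it equals $Y_i$; and $A_{k+1} Y$ is zero except in its last row, which equals $y\transpose$. Since the feasibility conditions force $\|Y_i\| = 1$ and $\|y\| = 1$, each of these matrices is nonzero, and the row-supports are disjoint. Thus $\{A_1Y, \ldots, A_{k+1}Y\}$ are linearly independent in $\Rnp$ for every $Y \in \calM_p$, confirming Assumption~\ref{assu:M}a with $m' = k+1$.

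Finally, the inequality $\frac{p(p+1)}{2} > k+1$ is equivalent (by the quadratic formula) to $p > \frac{-1+\sqrt{1+8(k+1)}}{2} = \frac{\sqrt{8k+9}-1}{2}$. Applying Theorem~\ref{thm:mastersmallp} under the verified assumption and with $\rank \calA = k+1$ then yields: for almost all $C$, every second-order critical point $Y$ of~\eqref{eq:manyspheresBM} is globally optimal for~\eqref{eq:manyspheresBM} and $X = YY\transpose$ is globally optimal for~\eqref{eq:manyspheresSDP}. Honestly, there is no real obstacle here — the entire argument is a structural identification followed by a direct citation of Theorem~\ref{thm:mastersmallp}, parallel to the earlier treatments of the generalized eigenvalue and trust-region subproblem SDPs.
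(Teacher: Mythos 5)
Your proposal is correct and matches the paper's own (terse) argument: the paper simply notes that Assumption~\ref{assu:M}a ``is easily checked'' for all $p\geq 1$ and cites Theorem~\ref{thm:mastersmallp}; you supply exactly the missing verification (disjoint supports of the $A_iY$ giving linear independence and $m'=\rank\calA=k+1$) and the correct algebra converting $\frac{p(p+1)}{2}>k+1$ into $p>\frac{\sqrt{8k+9}-1}{2}$.
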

To apply Theorem~\ref{thm:deterministic}, we first investigate the facial structure of the SDP.
\begin{lemma}
	Let $Y$ be feasible for~\eqref{eq:manyspheresBM} and have full rank $p$. The dimension of the face of the search space of~\eqref{eq:manyspheresSDP} at $YY\transpose$ obeys:
	\begin{align*}
		\dim \calF_{YY\transpose} & \leq \frac{p(p+1)}{2} - 2
	\end{align*}
	if $p \geq 2$, and $\dim \calF_{YY\transpose} = 0$ if $p = 1$.
\end{lemma}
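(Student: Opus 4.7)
The plan is to apply formula~\eqref{eq:dimF}, so I need to analyze the rank of $\calL_X \colon \Spp \to \reals^{k+1}$. First I would write down the constraint matrices explicitly: $A_i$ for $i = 1,\ldots,k$ is the block-diagonal matrix with $I_{n_i}$ in the $(i,i)$ block and zero elsewhere, while $A_{k+1}$ has a single 1 in the $(n+1,n+1)$ entry. A quick calculation then gives $Y\transpose A_i Y = Y_i\transpose Y_i^{}$ for $i\leq k$ and $Y\transpose A_{k+1} Y = yy\transpose$, so
\begin{align*}
\rank \calL_X = \dim \spann\{ Y_1\transpose Y_1^{},\, \ldots,\, Y_k\transpose Y_k^{},\, yy\transpose \} \subseteq \Spp.
\end{align*}
Since all these $p\times p$ symmetric matrices have trace equal to $1$ by feasibility, they are nonzero; thus the rank is at least $1$ for all $p \geq 1$.

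For $p=1$ the ambient space $\Spp$ itself has dimension $1$, so $\rank \calL_X = 1$, giving $\dim \calF_{YY\transpose} = 0$ directly from~\eqref{eq:dimF}. The main claim to verify is therefore the strict lower bound $\rank \calL_X \geq 2$ for $p\geq 2$, which I would prove by contradiction. Suppose the span above has dimension exactly $1$. Since each generator has trace $1$, each must be equal to a common matrix $M$ with $\trace M = 1$. In particular $M = yy\transpose$, which is rank one, and so $Y_i\transpose Y_i^{} = yy\transpose$ for every $i = 1,\ldots,k$.

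The key step is then to deduce that this forces $Y$ to have rank $1$, contradicting the full-rank hypothesis $\rank(Y)=p\geq 2$. From $Y_i\transpose Y_i^{} = yy\transpose$ one reads off that $Y_i$ has rank $1$ with row space spanned by $y\transpose$; writing $Y_i = z_i y\transpose$ with $z_i \in \reals^{n_i}$, the norm condition $\|Y_i\|=1$ (together with $\|y\|=1$) forces $\|z_i\|=1$. Stacking the blocks, every row of $Y$ (including the final row $y\transpose$) is a scalar multiple of $y\transpose$, so $\rank(Y) = 1$. This contradicts $p \geq 2$, hence $\rank \calL_X \geq 2$, and the desired bound $\dim \calF_{YY\transpose} \leq \frac{p(p+1)}{2} - 2$ follows from~\eqref{eq:dimF}.

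I expect the only subtle point to be the last implication --- extracting the rank-one structure of each $Y_i$ from the equality $Y_i\transpose Y_i^{} = yy\transpose$ and showing the $z_i$ share the direction $y\transpose$ on the right. Everything else is bookkeeping with the constraint matrices. This mirrors the rank-one argument used in the TRS case, and it is precisely the reason the bound cannot be tightened further without additional structural assumptions on $C$.
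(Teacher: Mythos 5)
Your proposal is correct and follows essentially the same route as the paper: apply \eqref{eq:dimF}, handle $p=1$ by dimension of $\Spp$, and for $p\geq 2$ argue by contradiction that a one-dimensional span forces $Y_i\transpose Y_i^{} \propto yy\transpose$, hence $Y_i = z_i y\transpose$ and $\rank(Y)=1$. The only (harmless) difference is that you use the unit-trace normalization to make the generators literally equal, where the paper only needs them equal up to nonzero scalars.
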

\begin{proof}
	Following~\eqref{eq:dimF},
	\begin{align*}
		\dim \calF_{YY\transpose} & = \frac{p(p+1)}{2} - \dim \spann \left( Y_1\transpose Y_1^{}, \ldots, Y_k\transpose Y_k^{}, yy\transpose \right).
	\end{align*}
	Since $Y$ is feasible, each defining element of the span is nonzero, so that the dimension is at least 1. If $p = 1$, these elements are scalars: they span $\reals$. Now consider $p \geq 2$ and assume for contradiction that the span has dimension one. Then, all defining elements are equal up to scaling. In other words: $Y_i\transpose Y_i^{} = \alpha_i \cdot yy\transpose$ for some nonzero $\alpha_i$. If so, $Y_i$ has rank 1 and there exists $z_i \in \reals^{n_i}$ such that $Y_i = z_i y\transpose$. In turn, this implies $Y$ has rank 1, which is a contradiction. Thus, the span has dimension at least two.
\end{proof}
\begin{corollary}
	For $p \geq \max(2, k)$, all second-order critical points of~\eqref{eq:manyspheresBM} are optimal and map to optima of~\eqref{eq:manyspheresSDP} (for any $C$).
\end{corollary}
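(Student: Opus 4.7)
The plan is to combine the face-dimension lemma just stated with Theorem~\ref{thm:deterministic}, handling the two possible ranks of a second-order critical point $Y$ separately. First, verify the relevant invariants of this SDP: Assumption~\ref{assu:M}a holds with $m' = k+1$ (the $k$ sphere constraints plus the homogenization constraint, whose associated matrices $A_i Y$ are visibly linearly independent for any feasible $Y$). Consequently, $\Delta = \frac{p(p+1)}{2} - (k+1)$ in the notation of Theorem~\ref{thm:eigS}.

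Case 1: if $\rank(Y) < p$, then Proposition~\ref{prop:rankdeficientY} directly yields that $Y$ is globally optimal for \eqref{eq:manyspheresBM} and $X = YY\transpose$ is globally optimal for \eqref{eq:manyspheresSDP}. No bound on $p$ is needed here beyond what makes $\calM$ well-defined.

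Case 2: if $\rank(Y) = p$, then I would invoke Theorem~\ref{thm:deterministic}, whose hypothesis in this situation is $\dim \calF_{YY\transpose} < \Delta + p = \frac{p(p+1)}{2} - (k+1) + p$. The previous lemma supplies $\dim \calF_{YY\transpose} \leq \frac{p(p+1)}{2} - 2$ as soon as $p \geq 2$. Substituting, the required inequality reduces to $-2 < -(k+1) + p$, i.e.\ $p \geq k$. So under $p \geq \max(2,k)$ both cases deliver global optimality, which is what the corollary asserts.

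There is really no hard step; the only care needed is in correctly pinning down $m' = k+1$ and in keeping straight which inequality is strict (the criterion in Theorem~\ref{thm:deterministic} is a strict inequality, and $\dim \calF_{YY\transpose} \leq \frac{p(p+1)}{2}-2$ gives strict inequality with $\Delta + p$ precisely when $p \geq k$). The worst that can go wrong is an off-by-one, so I would write out $\Delta + p - \dim\calF_{YY\transpose} \geq p - (k-1) > 0$ explicitly before appealing to Theorem~\ref{thm:deterministic}.
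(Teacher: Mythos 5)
Your proof is correct and follows exactly the route the paper intends: the corollary is an immediate combination of the preceding facial-dimension lemma (with $m'=k+1$, hence $\Delta+p=\frac{p(p+1)}{2}-(k+1)+p$) and Theorem~\ref{thm:deterministic}, with the rank-deficient case handled by the first clause of that theorem (equivalently, Proposition~\ref{prop:rankdeficientY}). Your arithmetic check $\Delta+p-\dim\calF_{YY\transpose}\geq p-(k-1)>0$ for $p\geq k$ is exactly the right bookkeeping.
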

For $k = 1$, this recovers the main result about the trust-region subproblem.
If the cost function in~\eqref{eq:manyspheres} is a homogeneous quadratic, then it can be written as
\begin{align}
	& \min_{x_i \in \reals^{n_i}, i = 1 \ldots k} x\transpose C x \quad \st \quad \|x_1\| = \cdots = \|x_k\| = 1,
	\tag{SpheresH} \label{eq:manysphereshomog} \\
	& \textrm{with } x\transpose = \begin{pmatrix}
	x_1\transpose & \cdots & x_k\transpose
	\end{pmatrix}. \nonumber	
\end{align}
The corresponding relaxation and Burer--Monteiro formulations read:
\begin{align}
	& \min_{X \in \reals^{n \times n}} \inner{C}{X} \quad \st \quad \trace(X_{11}) = \cdots = \trace(X_{kk}) = 1, X \succeq 0,
	\tag{SpheresH-SDP}
	\label{eq:manysphereshomogSDP}
\end{align}
and:
\begin{align}
	& \min_{Y \in \reals^{n\times p}} \inner{CY}{Y} \quad \st \quad \|Y_1\| = \cdots = \|Y_k\| = 1,
	\tag{SpheresH-BM}
	\label{eq:manysphereshomogBM} \\
	& \textrm{with } Y\transpose = \begin{pmatrix}
	Y_1\transpose & \cdots & Y_k\transpose
	\end{pmatrix}. \nonumber	
\end{align}
Assumption~\ref{assu:M}a holds for all $p \geq 1$ with $m' = k$. A similar analysis of the facial structure yields the following corollary of Theorem~\ref{thm:deterministic}.
\begin{corollary} 
	For almost all $C$,  provided $p > \frac{\sqrt{8k+1}-1}{2}$, all second-order critical points of~\eqref{eq:manysphereshomogBM} are optimal and map to optima of~\eqref{eq:manysphereshomogSDP}. If $p \geq k$, the result holds for all $C$.
\end{corollary}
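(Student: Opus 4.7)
The plan is to follow the same template as the preceding non-homogeneous several-spheres corollary, with two main steps: verify Assumption~\ref{assu:M}a with $m'=k$, then handle the generic and deterministic parts separately using Theorems~\ref{thm:mastersmallp} and~\ref{thm:deterministic}. The main adjustment compared with the non-homogeneous case is that dropping the ``last coordinate equals $1$'' constraint leaves only $k$ equality constraints, and the span of the $Y_i\transpose Y_i^{}$ blocks is only guaranteed to be $1$-dimensional rather than $2$-dimensional.

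First I would check Assumption~\ref{assu:M}a. The constraint matrices $A_i$ are block-diagonal with $I_{n_i}$ in the $i$-th diagonal block and zero elsewhere, so $A_iY$ has $Y_i$ in its $i$-th block and zero in the others. Feasibility forces $\|Y_i\|=1$, so each $A_iY$ is nonzero; since they are supported on disjoint row-blocks they are linearly independent for every $Y\in\calM$ and every $p\geq 1$. In particular $m'=\rank\calA=k$. The bound $p>\frac{\sqrt{8k+1}-1}{2}$ is then equivalent to $\frac{p(p+1)}{2}>k=\rank\calA$, so Theorem~\ref{thm:mastersmallp} immediately yields the first claim for almost all $C$.

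For the deterministic ($p\geq k$) statement, I split on the rank of a second-order critical point $Y$. If $\rank(Y)<p$, Proposition~\ref{prop:rankdeficientY} yields optimality directly. Otherwise, $\rank(Y)=p$ and Theorem~\ref{thm:deterministic} applies provided $\dim\calF_{YY\transpose}<\frac{p(p+1)}{2}-k+p$. By~\eqref{eq:dimF},
\begin{align*}
\dim \calF_{YY\transpose}=\frac{p(p+1)}{2}-\dim\spann(Y_1\transpose Y_1^{},\ldots,Y_k\transpose Y_k^{})\leq \frac{p(p+1)}{2}-1,
\end{align*}
since each $Y_i\transpose Y_i^{}$ has trace $\|Y_i\|^2=1$ and is therefore nonzero. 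The inequality $\frac{p(p+1)}{2}-1<\frac{p(p+1)}{2}+p-k$ rearranges to $p\geq k$, closing this case. No step is really an obstacle; the only point worth highlighting is that the weaker span bound (dimension $\geq 1$ rather than $\geq 2$) is exactly what shifts the threshold from $p\geq\max(2,k)$ in the non-homogeneous case down to $p\geq k$ here.
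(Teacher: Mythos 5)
Your proposal is correct and follows exactly the route the paper intends (the paper only sketches it as "a similar analysis of the facial structure"): verify Assumption~\ref{assu:M}a with $m'=k$ via the disjointly supported blocks $A_iY$, apply Theorem~\ref{thm:mastersmallp} for the generic bound $\frac{p(p+1)}{2}>k$, and for the deterministic part use \eqref{eq:dimF} with $\rank\calL_X\geq 1$ (each $Y_i\transpose Y_i^{}$ has unit trace) together with Theorem~\ref{thm:deterministic} to get the threshold $p\geq k$. Your remark that the span bound drops from $2$ to $1$ relative to the non-homogeneous case, shifting $\max(2,k)$ to $k$, is exactly the right observation.
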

For $k = 1$, this recovers the results of~\eqref{eq:geneig} with $B = I_n$.

\subsection{Max-Cut and Orthogonal-Cut SDP}

Let $n = qd$ for some integers $q, d$.
Consider the semidefinite program
\begin{align}
	\min_{X\in\Snn} \inner{C}{X}  \quad \st \quad  \sbd{X} = I_n, \ X \succeq 0,
	\tag{OrthoCut}
	\label{eq:orthocut}
\end{align}
where $\sbdop \colon \Snn \to \Snn$ preserves the diagonal blocks of size $d \times d$ and zeros out all other blocks. Specifically, with $X_{ij}$ denoting the $(i,j)$th block of size $d\times d$ in matrix $X$,
\begin{align*}
	\sbd{X}_{ij} & = \begin{cases}
	X_{ii} & \textrm{ if } i = j,\\
	0_{d\times d} & \textrm{ otherwise.}
	\end{cases}
\end{align*}
For example, with $d = 1$, the constraint $\sbd{X} = I_n$ is equivalent to $\diag(X) = \mathbf{1}$ and this SDP is the Max-Cut SDP~\citep{goemans1995maxcut}. For general $d$, diagonal blocks of $X$ of size $d \times d$ are constrained to be identity matrices: this SDP is known as Orthogonal-Cut~\citep{bandeira2013approximating,boumal2015staircase}. Among other uses, it appears as a relaxation of synchronization on $\mathbb{Z}_2 = \{\pm1\}$~\citep{bandeira2016lowrankmaxcut,mei2017solvingSDPs,abbe2014exact} and synchronization of rotations~\citep{rosen2016certifiably,eriksson2017rotationduality}, with applications in stochastic block modeling (community detection) and SLAM (simultaneous localization and mapping for robotics).

The Stiefel manifold $\St(p, d)$ is the set of matrices of size $p \times d$ with orthonormal columns. The Burer--Monteiro formulation of~\eqref{eq:orthocut} is an optimization problem over $q$ copies of $\St(p, d)$:
\begin{align}
	\min_{Y_1, \ldots, Y_q \in \Rpd} \inner{CY}{Y} \quad \st \quad Y_k\transpose Y_k^{} = I_d \ \forall k, \  Y\transpose = \begin{bmatrix}
	Y_1 & \cdots & Y_q
	\end{bmatrix}.
	\tag{OrthoCut-BM}
	\label{eq:orthocutbm}
\end{align}
For $d = 1$, this problem captures one side of the Grothendieck inequality~\citep[eq.~(1.1)]{khot2012grothendieck}.
Assumption~\ref{assu:M}a holds for all $p \geq d$ with $m' = q\frac{d(d+1)}{2}$ (which is the number of constraints). 
Theorem~\ref{thm:mastersmallp} applies as follows.
\begin{corollary}
	If $p > \frac{\sqrt{1 + 4n(d+1)} - 1}{2}$, for almost all $C$, any second-order critical point $Y$ of~\eqref{eq:orthocutbm} is a global optimum, and $X = YY\transpose$ is globally optimal for~\eqref{eq:orthocut}.
\end{corollary}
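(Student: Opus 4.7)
The corollary is a direct application of Theorem~\ref{thm:mastersmallp}. Two items must be verified: that Assumption~\ref{assu:M}a holds at the given $p$, and that the stated numerical threshold on $p$ is equivalent to the hypothesis $\frac{p(p+1)}{2} > \rank \calA$.

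For the first item, I would index the $m = q\frac{d(d+1)}{2}$ scalar equality constraints by a block index $k \in \{1,\ldots,q\}$ together with a pair $(i,j)$ with $1 \leq i \leq j \leq d$, writing $A_{k,ij} \in \Snn$ for the symmetric matrix supported on the $k$th diagonal $d\times d$ block and equal to $E_{ij}+E_{ji}$ there (with the usual convention $E_{ii}$ when $i=j$). For any $Y \in \calM$, the product $A_{k,ij} Y \in \Rnp$ is nonzero only in the $k$th block-row (of size $d \times p$), where it equals $(E_{ij}+E_{ji}) Y_k\transpose$. Vectors coming from different values of $k$ therefore have disjoint supports, so linear independence reduces to linear independence within each fixed $k$. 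A vanishing combination $\sum_{i\leq j} c_{ij} (E_{ij}+E_{ji}) Y_k\transpose = 0$ reads $S Y_k\transpose = 0$ with $S = \sum_{i \leq j} c_{ij}(E_{ij}+E_{ji}) \in \Sdd$; since $Y_k\transpose Y_k^{} = I_d$ makes $Y_k\transpose \colon \Rp \to \reals^d$ surjective, we deduce $S = 0$ and all $c_{ij}$ vanish. Hence Assumption~\ref{assu:M}a holds with $m' = m = q\frac{d(d+1)}{2}$.

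The same disjoint-support observation shows that the $A_{k,ij}$ are linearly independent in $\Snn$, so that $\rank \calA = q\frac{d(d+1)}{2} = \frac{n(d+1)}{2}$ via $n = qd$. Rearranging $\frac{p(p+1)}{2} > \frac{n(d+1)}{2}$ by completing the square and taking the positive root yields exactly $p > \frac{\sqrt{1+4n(d+1)}-1}{2}$, which is the hypothesis of the corollary. Applying Theorem~\ref{thm:mastersmallp} then delivers the conclusion: for almost every $C \in \Snn$ any second-order critical point $Y$ of~\eqref{eq:orthocutbm} is globally optimal, and $X = YY\transpose$ is globally optimal for~\eqref{eq:orthocut}.

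There is essentially no obstacle: the result is a bookkeeping application of a theorem already established in the paper. The only step that requires any care is the verification of Assumption~\ref{assu:M}a, where one must use the Stiefel condition $Y_k\transpose Y_k^{} = I_d$ (rather than merely feasibility) to promote linear independence in $\Sdd$ to linear independence of the normal vectors $A_{k,ij} Y$ in $\Rnp$.
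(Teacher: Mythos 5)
Your proposal is correct and follows exactly the route the paper intends: it asserts that Assumption~\ref{assu:M}a holds with $m' = m = q\frac{d(d+1)}{2} = \frac{n(d+1)}{2}$ and then invokes Theorem~\ref{thm:mastersmallp}, with the stated threshold being precisely the positive root of $\frac{p(p+1)}{2} > \rank\calA$. Your explicit verification of linear independence of the $A_{k,ij}Y$ (disjoint block supports across $k$, plus surjectivity of $Y_k\transpose$ from the Stiefel constraint within each block) correctly fills in the detail the paper leaves unstated.
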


In order to apply Theorem~\ref{thm:deterministic}, we must investigate the facial structure of
\begin{align*}
	\calC & = \{ X \in \Snn : \sbd{X} = I_n, X \succeq 0 \}.
\end{align*}
The following result generalizes a result in~\citep[Thm.\,3.1(i)]{laurent1996facial} to $d\geq 1$.
\begin{theorem}\label{thm:dimFbounds}
	If $X\in\calC$ has rank $p$, then the face $\calF_X$~\eqref{eq:calF} has dimension bounded as:
	\begin{align}
		\frac{p(p+1)}{2} - n\frac{d+1}{2} \ \leq \  \dim \calF_X \ \leq \  \frac{p(p+1)}{2} - p\frac{d+1}{2}.
	\end{align}
	If $p$ is an integer multiple of $d$, the upper bound is attained for some $X$.
\end{theorem}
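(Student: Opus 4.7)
The plan is to combine (\ref{eq:dimF}) with a dual-space description of $\rank \calL_X$. Fix $Y\in\Rnp$ of rank $p$ with $X = YY\transpose$, and decompose $Y\transpose = [Y_1, \ldots, Y_q]$ with $Y_k \in \St(p,d)$. Inspection of (\ref{eq:calL}) shows that $\calL_X(A)$ records the diagonal blocks $(Y_k\transpose A Y_k)_k$, so $\calL_X$ factors as a linear map from $\Spp$ into $\bigoplus_{k=1}^q \Sdd$. Passing to adjoints, $\rank \calL_X = \dim V$, where $V = \sum_{k=1}^q V_k$ with $V_k := \{Y_k M Y_k\transpose : M \in \Sdd\} = \{B \in \Spp : \col B \subseteq U_k\}$ and $U_k := \col Y_k$ a $d$-dimensional subspace of $\Rp$. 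Each $V_k$ has dimension $d(d+1)/2$ since $Y_k$ has orthonormal columns, so the lower bound is immediate: $\rank \calL_X \leq q \cdot d(d+1)/2 = n(d+1)/2$, yielding $\dim \calF_X \geq \frac{p(p+1)}{2} - \frac{n(d+1)}{2}$ via (\ref{eq:dimF}).

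For the upper bound it suffices to show $\dim V \geq p(d+1)/2$. Since $\rank Y = p$, the $U_k$'s satisfy $\sum_k U_k = \Rp$; pick a minimal subcollection $U_{k_1}, \ldots, U_{k_s}$ still summing to $\Rp$. I would prove by induction on $s$ the following lemma: if $U_1, \ldots, U_s$ are $d$-dimensional subspaces of an ambient space of dimension $p$ whose sum is the whole space, then $\dim(V_1 + \cdots + V_s) \geq p(d+1)/2$. The base case $s=1$ forces $p=d$ and gives equality. For the inductive step, let $W = U_1 + \cdots + U_{s-1}$, $w = \dim W$, $\Delta = p - w \in \{1,\ldots,d\}$; the hypothesis applied inside $W$ gives $\dim(V_1 + \cdots + V_{s-1}) \geq w(d+1)/2$. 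Now $V_s \cap (V_1 + \cdots + V_{s-1}) \subseteq \{B \in V_s : \col B \subseteq W\} = \{B \in \Spp : \col B \subseteq U_s \cap W\}$, so this intersection has dimension at most $\tfrac{(d-\Delta)(d-\Delta+1)}{2}$. Grassmann's formula then reduces the induction step to checking $d(d+1) - (d-\Delta)(d-\Delta+1) \geq \Delta(d+1)$, which rearranges to $\Delta(d-\Delta) \geq 0$ and is automatic.

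The main subtlety lies in this inductive dimension count, which draws on both the orthonormality $Y_k\transpose Y_k = I_d$ (to identify $V_k$ as the symmetric matrices supported on $U_k$) and on $\sum_k U_k = \Rp$ (inherited from $\rank X = p$). For the attainment claim when $d \mid p$, set $s = p/d$ and choose $Y_1, \ldots, Y_s$ with disjoint column supports assembling to the $p \times p$ identity (so $U_1 \oplus \cdots \oplus U_s = \Rp$), together with $Y_k = Y_1$ for $k > s$. Then $\ker \calL_X$ consists of symmetric $p \times p$ matrices with all $s$ diagonal $d \times d$ blocks zero, which has dimension $\binom{s}{2} d^2 = \frac{p(p-d)}{2}$, matching the upper bound.
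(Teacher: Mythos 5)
Your proposal is correct, and the core of it — the lower bound on $\rank\calL_X$ that yields the upper bound on $\dim\calF_X$ — is proved by a genuinely different argument than the paper's. The paper works in the primal: it greedily selects $p$ linearly independent rows of $Y$, invokes the Laurent--Poljak fact that the matrices $y_iy_j\transpose+y_jy_i\transpose$ built from independent vectors form a basis of $\Spp$, and then explicitly exhibits a collection $\calT$ of at least $p(d+1)/2$ constraint matrices whose linear independence is checked by expanding them in that basis slice by slice (plus a separate optimization step bounding $\sum_k |c_k|^2 \leq pd$). You instead pass to the adjoint, identify $\im\calL_X^*$ as $\sum_k V_k$ with $V_k$ the symmetric matrices supported on $U_k = \col Y_k$, and prove a clean, coordinate-free lemma by induction on the number of subspaces via Grassmann's formula; the key inequality collapses to $\Delta(d-\Delta)\geq 0$. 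Your route is shorter and isolates a reusable statement about sums of spaces of symmetric matrices supported on $d$-dimensional subspaces spanning $\Rp$, while the paper's route has the minor virtue of exhibiting an explicit independent set of constraints. The lower bound and the tightness construction for $d\mid p$ are essentially identical in both treatments, and your dimension count $\binom{s}{2}d^2 = p(p-d)/2$ for the attaining example checks out.
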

The proof is in Appendix~\ref{apdx:facesorthocut}. Combining this with Theorem~\ref{thm:deterministic} yields the following result.
\begin{corollary}
	If $p > \frac{d+1}{d+3}n$, any second-order critical point $Y$ for~\eqref{eq:orthocutbm} is globally optimal, and $X = YY\transpose$ is globally optimal for~\eqref{eq:orthocut}. In particular, for Max-Cut SDP ($d = 1$), the requirement is $p > \frac{n}{2}$.
\end{corollary}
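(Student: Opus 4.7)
The plan is to apply Theorem~\ref{thm:deterministic} using the explicit facial dimension bound from Theorem~\ref{thm:dimFbounds}. Let $Y$ be a second-order critical point of~\eqref{eq:orthocutbm}. If $\rank(Y)<p$, then Proposition~\ref{prop:rankdeficientY} already yields global optimality of $Y$ for~\eqref{eq:orthocutbm} and of $X=YY\transpose$ for~\eqref{eq:orthocut}, so we may focus on the case $\rank(Y)=p$.

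First I would record that for~\eqref{eq:orthocut} the number of (independent) equality constraints is
\begin{align*}
m' \ = \ q\,\frac{d(d+1)}{2} \ = \ \frac{n(d+1)}{2},
\end{align*}
since $n=qd$. Hence, in the notation of Theorem~\ref{thm:deterministic},
\begin{align*}
\frac{p(p+1)}{2} - m' + p \ = \ \frac{p(p+1)}{2} - \frac{n(d+1)}{2} + p.
\end{align*}

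Next I would invoke the upper bound from Theorem~\ref{thm:dimFbounds}, which asserts that at any $X\in\calC$ of rank $p$,
\begin{align*}
\dim \calF_X \ \leq \ \frac{p(p+1)}{2} - p\,\frac{d+1}{2}.
\end{align*}
Comparing the two displayed expressions, the face-dimension hypothesis of Theorem~\ref{thm:deterministic} is satisfied whenever
\begin{align*}
\frac{p(p+1)}{2} - p\,\frac{d+1}{2} \ < \ \frac{p(p+1)}{2} - \frac{n(d+1)}{2} + p,
\end{align*}
which rearranges (cancel $\tfrac{p(p+1)}{2}$, multiply by $2$) to $n(d+1) < p(d+3)$, i.e.\ exactly the standing hypothesis $p > \tfrac{d+1}{d+3}n$. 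Thus Theorem~\ref{thm:deterministic} forces $Y$ to be globally optimal for~\eqref{eq:orthocutbm} and $X=YY\transpose$ to be globally optimal for~\eqref{eq:orthocut}. Specializing to $d=1$ gives the threshold $p>\tfrac{2n}{4}=\tfrac{n}{2}$, yielding the Max-Cut statement.

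There is essentially no hard step here: the argument is a direct plug-and-chug of Theorem~\ref{thm:dimFbounds} into Theorem~\ref{thm:deterministic}. The only mild subtlety is that Theorem~\ref{thm:deterministic} requires Assumption~\ref{assu:M} to hold for the relevant $p$; this is guaranteed by the discussion preceding the corollary (Assumption~\ref{assu:M}a holds for all $p\geq d$), and one checks that $p>\tfrac{(d+1)n}{d+3}\geq d$ in the regime of interest, so the hypothesis is automatic. All the real mathematical content is packed into Theorem~\ref{thm:eigS} (through Theorem~\ref{thm:deterministic}) and the upper bound on $\dim \calF_X$ proved in Appendix~\ref{apdx:facesorthocut}; the present corollary is the algebraic distillation of those two ingredients.
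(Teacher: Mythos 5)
Your proposal is correct and follows exactly the paper's own argument: dispatch the rank-deficient case with Proposition~\ref{prop:rankdeficientY}, then for $\rank(Y)=p$ plug the upper bound of Theorem~\ref{thm:dimFbounds} into the face-dimension threshold of Theorem~\ref{thm:deterministic} with $m'=n\frac{d+1}{2}$ and simplify to $(n-p)(d+1)<2p$. The only quibble is the side remark that $p>\frac{(d+1)n}{d+3}$ forces $p\geq d$: this can fail when $q=1$, but that case is degenerate ($\calC$ is a single point) and irrelevant to the substance of the proof.
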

\begin{proof}
	If $Y$ is rank deficient, use Proposition~\ref{prop:rankdeficientY}. Otherwise,
	since $\rank(X) = p$, Theorem~\ref{thm:dimFbounds} gives $\dim \calF_X \leq \frac{p(p+1)}{2} - p\frac{d+1}{2}$ and Theorem~\ref{thm:deterministic} gives optimality if
	\begin{align*}
	\dim \calF_X < \frac{p(p+1)}{2} - n \frac{d+1}{2} + p.
	\end{align*}
	This is the case provided $(n-p)(d+1) < 2p$, that is, if $p > \frac{d+1}{d+3}n$.
\end{proof}

\section{Discussion of the assumptions} \label{sec:discussionassumptions}

We now discuss the assumptions that appear in the main theorems.

The starting point of this investigation is the hope to solve~\eqref{eq:SDP} by solving~\eqref{eq:P} instead.
For smooth, non-convex optimization problems, even verifying local optimality is usually hard~\citep{murty1987npcomplete}. Thus, we wish to restrict our attention to efficiently computable points, such as points which satisfy first- and second-order Karush--Kuhn--Tucker (KKT) conditions for~\eqref{eq:P}---see~\cite[\S2.2]{sdplr} and \cite[\S3]{ruszczynski2006nonlinear}. This only helps if global optima satisfy the latter, that is, if KKT conditions are necessary for optimality.

A global optimum $Y$ necessarily satisfies KKT conditions if \emph{constraint qualifications} (CQs) hold at $Y$~\citep{ruszczynski2006nonlinear}. The standard CQs for equality constrained programs are Robinson's conditions or metric regularity (they are here equivalent). They read as follows:
\begin{align}
	\textrm{CQs hold at } Y\in\calM \textrm{ if } A_1Y, \ldots, A_mY \textrm{ are linearly independent in } \Rnp.
	\tag{CQ}
	\label{eq:CQ}
\end{align}
Considering all cost matrices $C$, global optima could, a priori, be anywhere in $\calM$.
Thus, we require CQs to hold at all $Y$ in $\calM$ rather than only at the (unknown) global optima. This leads to Assumption~\ref{assu:M}a. Adding redundant constraints (for example, duplicating $\inner{A_1}{X} = b_1$) would break the CQs, but does not change the optimization problem. This is allowed by Assumption~\ref{assu:M}b. 

In general, \eqref{eq:SDP} may not have an optimal solution. One convenient way to guarantee that it does is to require $\calC$ to be compact, which is why this assumption appears in Theorem~\ref{thm:stableinformal} to bound optimality gaps for approximate second-order critical points. When $\calC$ is compact, one furthermore gets the guarantee that at least one of the global optima is an extreme point of $\calC$, which leads to the guarantee that at least one of the global optima has rank $p$ bounded as $\frac{p(p+1)}{2} \leq m'$~\eqref{eq:patakirank}. The other way around, it is possible to pick the cost matrix $C$ such that the unique solution to~\eqref{eq:SDP} is an extreme point of maximal rank, which can be as large as allowed by~\eqref{eq:patakirank}. This justifies why, in Theorem~\ref{thm:mastersmallp}, the bound on $p$ is essentially optimal. The compactness assumption could conceivably be relaxed, provided candidate global optima remain bounded. This could plausibly come about by restricting attention to positive definite cost matrices $C$.

One restriction in particular in Theorem~\ref{thm:mastersmallp} merits further investigation: the exclusion of a zero-measure set of cost matrices (``bad $C$'').
From the trust-region subproblem example in Section~\ref{sec:trs}, we know that it is necessary (in general) to allow the exclusion of a zero-measure set of cost matrices in Lemma~\ref{lem:criticalptsrankdeficient}. Yet, in that same example, the excluded cost matrices do not give rise to suboptimal second-order critical points (as we proved through a different argument involving Theorem~\ref{thm:deterministic}.) Thus, it remains unclear whether or not a zero-measure set of cost matrices must be excluded in Theorem~\ref{thm:mastersmallp}.
Resolving this question is key to gain deeper understanding of the relationship between~\eqref{eq:SDP} and~\eqref{eq:P}. 


Finally, we connect the notion of smooth SDP used in this paper to the more standard notion of non-degeneracy in SDPs as defined in~\citep[Def.~5]{alizadeh1997complementarity}. Informally: for linearly independent $A_i$, non-degeneracy at all points is equivalent to smoothness. The proof is in Appendix~\ref{apdx:equivalencenondegeneracysmoothness}.
\begin{definition}
	$X$ is \emph{primal non-degenerate} for~\eqref{eq:SDP} if it is feasible and $\T_X + \ker\calA = \Snn$, where $\T_X$ is the tangent space at $X$ to the manifold of symmetric matrices of rank $r$ embedded in $\Snn$, where $r = \rank(X)$.
\end{definition}
\begin{proposition}\label{prop:equivalencenondegeneracysmoothness}
	Let $A_1, \ldots, A_m$ defining $\calA$ be linearly independent. Then, Assumption~\ref{assu:M}a holds for all $p$ such that $\calM_p$ is non-empty if and only if all $X \in \calC$ are primal non-degenerate.
\end{proposition}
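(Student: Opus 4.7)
The plan is to translate primal non-degeneracy into a linear-independence statement about matrices $A_iY'$ (for a full-rank factorization $Y'$ of $X$) and then show this matches Assumption~\ref{assu:M}a for any factorization $Y$ of the same $X$.

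First, I would compute $\T_X^\perp$ explicitly. If $X \in \calC$ has rank $r$ and $X = Y'Y'^\transpose$ with $Y'\in\Rnr$ of full column rank, then curves on the rank-$r$ manifold through $X$ take the form $\gamma(t)\gamma(t)\transpose$ with $\gamma(0)=Y'$, so $\T_X = \{\dot Y' Y'^\transpose + Y' \dot Y'^\transpose : \dot Y' \in \Rnr\}$. For $N \in \Snn$, $\inner{N}{\dot Y' Y'^\transpose + Y' \dot Y'^\transpose} = 2\inner{NY'}{\dot Y'}$, so $N \in \T_X^\perp$ if and only if $NY' = 0$. Since $\T_X + \ker\calA = \Snn$ is equivalent to $\T_X^\perp \cap \im\calA^* = \{0\}$, primal non-degeneracy at $X$ becomes: $\calA^*(\mu)Y' = 0$ implies $\calA^*(\mu) = 0$, and under the standing linear independence of $A_1, \ldots, A_m$, this in turn becomes linear independence of $\{A_1Y', \ldots, A_mY'\}$ in $\Rnr$.

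Second, I would show that this ``small factorization'' condition is equivalent to Assumption~\ref{assu:M}a for any $Y \in \calM_p$ factorizing the same $X$. Since $\col(Y) = \col(X) = \col(Y')$, there exists $Q \in \reals^{r\times p}$ with $Y = Y'Q$; comparing $YY\transpose = Y'Y'^\transpose$ and using that $Y'^\transpose Y'$ is invertible yields $QQ\transpose = I_r$. Then $\sum_i \mu_i A_i Y = \big(\sum_i \mu_i A_i Y'\big)Q$, and post-multiplying by $Q\transpose$ recovers $\sum_i \mu_i A_i Y'$. Hence $\{A_iY\}_i$ is linearly independent in $\Rnp$ iff $\{A_iY'\}_i$ is linearly independent in $\Rnr$.

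Finally, I would assemble both directions. For ($\Rightarrow$), given Assumption~\ref{assu:M}a for all relevant $p$, each $X \in \calC$ of rank $r$ admits a full-rank factorization $Y' \in \calM_r$ (since $\calM_r$ is non-empty), to which the assumption applies, giving primal non-degeneracy at $X$. For ($\Leftarrow$), given primal non-degeneracy at every $X \in \calC$, pick any $p$ with $\calM_p$ non-empty and any $Y \in \calM_p$; apply the equivalence from the second step to conclude that $\{A_iY\}_i$ is linearly independent. The only mildly delicate point is the factorization $Y = Y'Q$ with $QQ\transpose = I_r$, which needs the rank argument above; otherwise the proof is essentially a bookkeeping exercise once $\T_X^\perp$ is identified.
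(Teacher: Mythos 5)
Your proof is correct, but it follows a different route from the paper's. The paper outsources the hard part to a cited result (Theorem~6 of Alizadeh, Haeberly and Overton): it writes $X = QDQ\transpose$ with $Q = [Q_1\ Q_2]$, invokes that theorem to characterize primal non-degeneracy as linear independence of certain block matrices $B_k$ built from $Q_1\transpose A_k Q_1$, $Q_1\transpose A_k Q_2$, observes that linear dependence of the $B_k$ is equivalent to linear dependence of the $A_kQ_1$ and hence of the $A_kY$ (since $\spann(Y) = \spann(Q_1)$), and handles the quantifier over $p$ by first reducing to $p = n$ via Proposition~\ref{prop:assuforallp}. You instead work directly from the definition $\T_X + \ker\calA = \Snn$: you pass to orthogonal complements to get $\T_X^\perp \cap \im\calA^* = \{0\}$, compute $\T_X^\perp = \{N \in \Snn : NY' = 0\}$ from the parametrization $\dot Y'Y'^\transpose + Y'\dot Y'^\transpose$ of $\T_X$, and use the standing linear independence of the $A_i$ to turn this into linear independence of $\{A_iY'\}$ --- in effect re-deriving the relevant special case of the cited theorem. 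You then transfer between factorizations of different widths via $Y = Y'Q$ with $QQ\transpose = I_r$, which replaces the paper's reduction to $p = n$. Your approach buys self-containedness (no external citation, and no appeal to Proposition~\ref{prop:assuforallp}); the paper's is shorter given the reference. One small point worth making explicit if you write this up: the paper's $\T_X$ is the tangent space to the manifold of symmetric rank-$r$ matrices, whereas your parametrization describes the PSD rank-$r$ manifold; these coincide locally at a PSD point $X$ of rank $r$ (nearby symmetric rank-$r$ matrices are automatically PSD by eigenvalue continuity), or one can just check that the image of $\dot Y' \mapsto \dot Y'Y'^\transpose + Y'\dot Y'^\transpose$ has the right dimension $nr - r(r-1)/2$.
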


\section{Conclusions and perspectives}

We have shown how, under Assumption~\ref{assu:M} and extra conditions (on $p$, compactness, and the cost matrix), the Burer--Monteiro factorization approach to solving~\eqref{eq:SDP} is ``safe'', despite non-convexity. For future research, it is of interest to determine if
the proposed assumptions can be relaxed.
Furthermore, it is important for practical purposes to determine whether approximate second-order critical points are approximately optimal for values of $p$ well below $n$ (an example of this for a specific context is given in~\citep{bandeira2016lowrankmaxcut}).
One possible way forward is a smoothed analysis of the type developed recently in~\citep{bhojanapalli2018smoothedsdp,pumir2018smoothedsmoothsdp}, though these early works leave plenty of room for improvement.

        

\appendices

\section{{\protect{Consequences and properties of Assumption~\ref{assu:M}}}}\label{apdx:assuforallp}

\begin{proof}[Proof of Proposition~\ref{prop:submanifold}]
	The set $\calM$ is defined as the zero level set of $\Phi \colon \Rnp \to \Rm$ where $\Phi(Y) = \calA(YY\transpose) - b$.
	The differential of $\Phi$ at $Y$, $\D\Phi(Y)$, has rank equal to the dimension of the space spanned by $\{ A_1Y, \ldots, A_mY \}$.
	%
	Under Assumption~\ref{assu:M}a, $\D\Phi(Y)$ has full rank $m$ on $\calM$ and the result follows from~\citep[Corollary~5.14]{lee2012smoothmanifolds}. 
	Under Assumption~\ref{assu:M}b, $\D\Phi(Y)$ has constant rank $m'$ in a neighborhood of $\calM$ and the result follows from~\citep[Theorem~5.12]{lee2012smoothmanifolds}. 
	%
\end{proof}

\begin{proof}[Proof of Proposition~\ref{prop:assuforallp}]

First, let Assumption~\ref{assu:M}a hold for some $p$, and consider $p' < p$ such that $\calM_{p'}$ is non-empty. For any $Y' \in \calM_{p'}$, form $Y = \left[ Y' | 0_{n \times (p-p')} \right] \in \Rnp$. Clearly, $Y$ is in $\calM_p$, so that $$m = \dim \spann\{ A_1 Y, \ldots, A_m Y \} = \dim \spann\{ A_1 Y', \ldots, A_m Y'\},$$ as desired. For $p = n$, we now consider the case $p' > n$. Let $Y' \in \calM_{p'}$ and consider its full SVD, $Y' = U\Sigma V\transpose$, with $\Sigma\in\reals^{n\times p'}$. Then, $Y'V$ is in $\calM_{p'}$ as well. Since the last $p'-n$ columns of $\Sigma$ are zero, we have $Y'V = U\Sigma = \left[ Y | 0_{n \times (p'-n)} \right]$ with $Y \in \calM_n$. Thus, as desired,
\begin{align*}
\dim\spann\{ A_1Y', \ldots, A_mY' \} & = \dim\spann\{ A_1Y'V, \ldots, A_mY'V \} \\ & = \dim\spann\{ A_1Y, \ldots, A_mY \} \\ & = m.
\end{align*}

Second, let Assumption~\ref{assu:M}b hold for some $p$, and consider $p' < p$ such that $\calM_{p'}$ is non-empty. For any $Y' \in \calM_{p'}$, form $Y = \left[ Y' | 0_{n \times (p-p')} \right] \in \calM_p$. By assumption, there exists an open ball $B_Y$ in $\Rnp$ of radius $\varepsilon = \varepsilon(Y) > 0$ centered at $Y$ such that $$\dim\spann\{A_1\tilde Y, \ldots, A_m\tilde Y\} = m'$$ for all $\tilde Y \in B_Y$. Let $B_{Y'}$ be the open ball in $\reals^{n\times p'}$ of radius $\varepsilon(Y)$ and center $Y'$. For any $\tilde Y' \in B_{Y'}$, form $\tilde Y = \left[ \tilde Y' | 0_{n \times (p-p')} \right]$. Since $\|\tilde Y - Y\| = \|\tilde Y' - Y'\| \leq \varepsilon$, we have $\tilde Y \in B_Y$, so that
\begin{align*}
m' = \dim\spann\{A_1\tilde Y, \ldots, A_m\tilde Y\} = \dim\spann\{A_1\tilde Y', \ldots, A_m\tilde Y'\}.
\end{align*}
Thus, Assumption~\ref{assu:M}b holds with the open neighborhood of $\calM_{p'}$ consisting of the union of all balls $B_{Y'}$ for $Y' \in \calM_{p'}$ as described above. 
\end{proof}

\section{The facial structure of $\calC$} \label{apdx:faces}

\begin{proof}[Proof of Proposition~\ref{prop:faces}]
	The construction follows~\citep{pataki1998rank} and applies for any linear equality constraints. 
	We first show that if $X'$ is of the form in~\eqref{eq:calF}, then it must be in $\calF_X$. This is clear if $X' = X$. Otherwise, pick $t > 0$ such that $I_p - tA \succeq 0$. Then, $X'$ and $X'' = Y(I_p - tA)Y\transpose$ define a closed line segment in $\calC$ whose relative interior contains $X$. By Definition~\ref{def:face}, this implies $X'$ (and $X''$) are in $\calF_X$.
	
	The other way around, we now show that any point in $\calF_X$ must be of the form of $X'$ in~\eqref{eq:calF}. Let $W\in\Snn$ be such that $X' = X + W$. Since $X$ is in the relative interior of $\calF_X$ which is convex, there exists $t > 0$ such that $X - tW \in \calF_X$.
	%
	Let $Y_\perp \in \reals^{n \times (n-p)}$ be such that $M = \begin{bmatrix}
	Y & Y_\perp
	\end{bmatrix}$ is invertible. We can express $X = YY\transpose$ and $W$ as
	\begin{align*}
	X & = M \begin{bmatrix}
	I_p & 0 \\ 0 & 0
	\end{bmatrix} M\transpose & & \textrm{ and } & W & = M \begin{bmatrix}
	A & B \\ B\transpose & C
	\end{bmatrix} M\transpose.
	\end{align*}
	Then, explicitly, these two matrices must belong to $\calC$:
	\begin{align*}
	X + W & = M \begin{bmatrix}
	I_p + A & B \\ B\transpose & C
	\end{bmatrix} M\transpose, & & \textrm{ and } & X - tW & = M \begin{bmatrix}
	I_p - tA & -tB \\ -tB\transpose & -tC
	\end{bmatrix} M\transpose.
	\end{align*}
	In particular, they must both be positive semidefinite, which implies $C \succeq 0$ and $-tC\succeq 0$, so that $C = 0$. By Schur's complement, it follows that $B = 0$. Thus, $W = YAY\transpose$ for some $A \in \Spp$ such that $I_p + A \succeq 0$. Furthermore, $\calA(X') = \calA({X+W}) = b$, so that $\calA({W}) = 0$. The latter is equivalent to $\calL_X(A) = 0$ using~\eqref{eq:calL}.
	%
	%
	%
%
\end{proof}

\section{Faces of the Ortho-Cut SDP}\label{apdx:facesorthocut}

\begin{proof}[Proof of Theorem~\ref{thm:dimFbounds}]
	Consider the definition of $\calL_X$~\eqref{eq:calL} and inequality~\eqref{eq:dimF}: the latter covers the lower bound and shows we need $\rank \calL_X \geq p(d+1)/2$ for the upper bound, that is, we need to show the condition $\calL_X(A) = 0$ imposes at least $p(d+1)/2$ linearly independent constraints on $A\in\Spp$.
	
	Let $Y\in\calM_p$ be such that $X = YY\transpose$, and let $y_1, \ldots, y_n \in \Rp$ denote the rows of $Y$, transposed. 
	Greedily select $p$ linearly independent rows of $Y$, in order, such that row $i$ is picked iff it is linearly independent from rows $y_1$ to $y_{i-1}$. This is always possible since $Y$ has rank $p$. Write $t = \{ t_1 < \cdots < t_p \}$ to denote the indices of selected rows. Write $s_k = \{ ((k-1)d+1), \ldots, kd \}$ to denote the indices of rows in slice $Y_k\transpose$, and let $c_k = s_k \cap t$ be the indices of selected rows in that slice.
	
	We make use of the following fact~\citep[Lem.\,2.1]{laurent1996facial}: for $x_1,\ldots,x_p\in\Rp$ linearly independent, the $p(p+1)/2$ symmetric matrices $x_i^{}x_j\transpose + x_j^{}x_i\transpose$ form a basis of $\Spp$. Defining $E_{ij} = y_i^{}y_j\transpose + y_j^{}y_i\transpose = E_{ji}$, this means $\calE = \{ E_{t_{\ell^{}},t_{\ell'}} : \ell,\ell' = 1\ldots p \}$ forms a basis of $\Spp$ ($\calE$ is a set, so that $E_{ij}$ and $E_{ji}$ contribute only one element).
	Similarly, since each slice $Y_k\transpose$ has orthonormal rows, matrices in $\{E_{ij} : i,j\in s_k\}$ are linearly independent.
	
	The constraint $\calL_X(A) = 0$ means $\inner{A}{E_{ij}} = 0$ for each $k$ and for each $i,j \in s_k$. To establish the theorem, we need to extract a subset $\calT$ of at least $p(d+1)/2$ of these $qd(d+1)/2$ constraint matrices, and guarantee their linear independence. To this end, let
	\begin{align}
	\calT & = \{ E_{ij} : k \in \{1,\ldots,q\} \textrm{ and } i \in c_k \subseteq s_k, j \in s_k \}.
	\end{align}
	That is, for each slice $k$, $\calT$ includes all constraints of  that slice which involve at least one of the selected rows. For each slice $k$, there are $|c_k|d - \frac{|c_k|(|c_k|-1)}{2}$ such constraints---note the correction for double-counting the $E_{ij}$'s where both $i$ and $j$ are in $c_k$.
	Thus, using $|c_1|+\cdots+|c_q|=p$, the cardinality of $\calT$ is:
	\begin{align}
	|\calT| & = \sum_{k = 1}^q \left[ |c_k|d - \frac{|c_k|(|c_k|-1)}{2} \right] = p(d+1/2) - \frac{1}{2}\sum_{k = 1}^q |c_k|^2.
	\label{eq:contraintcollectionsize}
	\end{align}
	We first show matrices in $\calT$ are linearly independent. Then, we show $|\calT|$ is large enough.
	
	Consider one $E_{ij} \in \calT$: $i,j \in s_k$ for some $k$ and $i = t_\ell$ for some $\ell$ (otherwise, permute $i$ and $j$). By construction of $t$, we can expand $y_j$ in terms of the rows selected in slices 1 to $k$, i.e., $y_j = \sum_{\ell' = 1}^{\ell_k} \alpha_{j,\ell'} y_{t_{\ell'}}$, where $\ell_k = |c_1| + \cdots + |c_k|$. As a result, $E_{ij}$ expands in the basis $\calE$ as follows: $E_{ij} = \sum_{\ell' = 1}^{\ell_k} \alpha_{j,\ell'} E_{t_\ell,t_{\ell'}}$. As noted before, $E_{ij}$'s in $\calT$ contributed by a same slice $k$ are linearly independent. Furthermore, they expand in only a subset of the basis $\calE$, namely, $\calE^{(k)} = \{ E_{t_\ell, t_{\ell'}} : \ell_{k-1} < \ell \leq \ell_k, \ell' \leq \ell_k \}$: $t_\ell$ is a selected row of slice $k$ and $t_{\ell'}$ is a selected row of some slice between 1 and $k$. For $k\neq k'$, $\calE^{(k)}$ and $\calE^{(k')}$ are disjoint; in fact, they form a partition of $\calE$.
	Hence, elements of $\calT$ are linearly independent.
	
	It remains to lower bound~\eqref{eq:contraintcollectionsize}. To this end, use $|c_k| \leq d$ and $|c_1|+\cdots+|c_q|=p$ to get:
	\begin{align*}
	\sum_{k = 1}^q |c_k|^2 \leq \max_{x\in\Rq : \|x\|_\infty \leq d, \|x\|_1 = p} \|x\|^2 = \floorr*{\frac{p}{d}}d^2 + \left( p - \floorr*{\frac{p}{d}}d \right)^2 \leq pd.
	%
	%
	\end{align*}
	Indeed, the maximum in $x$ is attained by making as many of the entries of $x$ as large as possible, that is, by setting $\floor{p/d}$ entries to $d$ and setting one other entry to  $p - \floorr*{p/d}d$ if the latter is nonzero. This many entries are available since $p \leq qd = n$. That this is optimal can be verified using KKT conditions.
	In combination with~\eqref{eq:contraintcollectionsize}, this confirms at least $p(d+1/2)-pd/2 = p(d+1)/2$ linearly independent constraints act on $A$, thus upper bounding $\dim \calF_X$.
	
	To conclude, we argue that the proposed upper bound is essentially tight. Indeed, build $Y\in\calM_p$ by repeating $q$ times the $d$ first rows of $I_p$, then by replacing its $p$ first rows with $I_p$ (to ensure $Y$ has full rank). If $p/d$ is an integer, then exactly the $p/d$ first slices each contribute $d(d+1)/2$ independent constraints, i.e., $\dim \calF_{YY\transpose} = p(p+1)/2 - p(d+1)/2$.
\end{proof}

\section{Equivalence of global non-degeneracy and smoothness} \label{apdx:equivalencenondegeneracysmoothness}

\begin{proof}[Proof of Proposition~\ref{prop:equivalencenondegeneracysmoothness}]
	By Proposition~\ref{prop:assuforallp}, it is sufficient to consider the case $p = n$.
	Consider $X \in \calC$ of rank $r$ and
	a diagonalization $X = QDQ\transpose$, where $D = \diag(\lambda_1, \ldots, \lambda_r, 0, \ldots, 0)$ and $Q = \begin{bmatrix}
	Q_1 & Q_2
	\end{bmatrix}$ is orthogonal of size $n$ with $Q_1 \in \reals^{n\times r}$.
	By~\citep[Thm.~6]{alizadeh1997complementarity}, since $A_1, \ldots, A_m$ are linearly independent, $X$ is primal non-degenerate if and only if the matrices
	\begin{align*}
	B_k & = \begin{bmatrix}
	Q_1\transpose A_k Q_1 & Q_1\transpose A_k Q_2 \\
	Q_2\transpose A_k Q_1 & 0
	\end{bmatrix}, \quad k = 1 \ldots, m
	\end{align*}
	are linearly independent. The $B_k$ are linearly dependent if and only if there exist $\alpha_1, \ldots, \alpha_m$ not all zero such that $\alpha_1 B_1 + \cdots + \alpha_m B_m = 0$. Considering the first $r$ columns of the $B_k$, the latter holds if and only if $\sum_k \alpha_k Q\transpose A_k Q_1 = 0$, which holds if and only if $\sum_k \alpha_k A_k Q_1 = 0$.
	For any $Y\in\Rnp$ such that $X = YY\transpose$, since
	$\spann(Y) = \spann(Q_1)$, we have
	$\sum_k \alpha_k A_k Q_1 = 0$ if and only if $\sum_k \alpha_k A_k Y = 0$. This shows the $B_k$ are linearly dependent if and only if the $A_kY$ are linearly dependent. Thus, $X$ is primal non-degenerate if and only if $\{A_1Y, \ldots, A_mY\}$ are linearly independent. Overall, primal non-degeneracy holds at all $X \in \calC$ if and only if Assumption~\ref{assu:M}a holds.
\end{proof}




 \ack 

NB was partially supported by NSF grant DMS-1719558. Part of this work was done while NB was with the D.I.\ at Ecole normale sup\'erieure de Paris and INRIA's SIERRA team.
ASB was partially supported by NSF grants DMS-1712730 and DMS-1719545. Part of this work was done while ASB was with the Mathematics Department at MIT and partially supported by NSF grant DMS-1317308





\frenchspacing
\bibliographystyle{cpam}
%
\bibliography{../../boumal}

\end{document}